\newcommand{\C}{\mathbb{C}}
\newcommand{\F}{\mathbb{F}}
\newcommand{\N}{\mathbb{N}}
\renewcommand{\P}{\mathbb{P}}
\newcommand{\Q}{\mathbb{Q}}
\newcommand{\Z}{\mathbb{Z}}
\newcommand{\Spec}{\ensuremath{\operatorname{Spec}}}
\newcommand{\proj}{\ensuremath{\operatorname{proj}}}
\newcommand{\orb}{\mathcal{O}}
\newcommand*\cat[1]{{\tt #1}}
\newcommand{\Hom}{\ensuremath{\operatorname{Hom}}}
\newcommand{\Fun}{\ensuremath{\operatorname{Fun}}}
\newcommand{\Gal}{\ensuremath{\operatorname{Gal}}}
\newcommand{\eff}{\ensuremath{\operatorname{eff}}}
\newcommand{\Dec}{\ensuremath{\operatorname{Dec}}}
\newcommand{\Lin}{\ensuremath{\operatorname{Lin}}}
\newcommand{\Int}{\ensuremath{\operatorname{Int}}}
\newcommand{\legen}[2]{\left (\frac{#1}{#2}\right )}
\newcommand{\roof}[3]{\tikz[baseline=10]{
  \node at (-1,0) (a) {$#1$};
  \node at (1,0) (b) {$#2$};
  \node at (0,1) (top) {$#3$};
  \draw[->] (top) -- (a);
  \draw[->] (top) -- (b);
}}
\newcommand{\roofx}[5]{\tikz[baseline=10]{
  \node at (-1,0) (a) {$#1$};
  \node at (1,0) (b) {$#2$};
  \node at (0,1) (top) {$#3$};
  \draw[->] (top) -- (a) node[left,pos=.2] {$#4$};
  \draw[->] (top) -- (b) node[right,pos=.2] {$#5$};
}}
\newcommand{\ds}{\displaystyle}
\newtheorem{thm}{Theorem}[section]
\newtheorem{prop}[thm]{Proposition}
\newtheorem{lem}[thm]{Lemma}
\newtheorem{defn}[thm]{Definition}
\newtheorem{cor}[thm]{Corollary}
\newtheorem{rem}[thm]{Remark}
  \let\oldrem\rem
  \renewcommand{\rem}{\oldrem\normalfont}
\newtheorem{question}[thm]{Question}
\newtheorem{ex}[thm]{Example}
  \let\oldex\ex
  \renewcommand{\ex}{\oldex\normalfont}
\title{Categorifying Quadratic Zeta Functions}
\author{Jon Aycock and Andrew Kobin}
\date{\today}
\begin{document}

\maketitle

\rhead{\thepage}
\cfoot{}

\begin{abstract}
The Dedekind zeta function of a quadratic number field factors as a product of the Riemann zeta function and the $L$-function of a quadratic Dirichlet character. We categorify this formula using objective linear algebra in the abstract incidence algebra of the division poset. 
\end{abstract}

\section{Introduction}
\label{sec:intro}

Let $K/\Q$ be a number field. Its Dedekind zeta function is defined as the formal expression 
$$
\zeta_{K}(s) = \sum_{\frak{a}\in I_{K}^{+}} \frac{1}{N(\frak{a})^{s}} = \sum_{n = 1}^{\infty} \frac{\#\{\frak{a}\mid N(\frak{a}) = n\}}{n^{s}}
$$
where $s$ is a complex number, $N = N_{K/\Q}$ is the field norm and $I_{K}^{+}$ denotes the set of ideals in the ring of integers $\orb_{K}$ of $K$. For example, $\zeta_{\Q}(s)$ is the Riemann zeta function. The formal Dirichlet series $\zeta_{K}(s)$ converges for complex numbers $s$ with real part greater than $1$ and, like the Riemann zeta function, $\zeta_{K}(s)$ admits a meromorphic continuation to the complex plane, satisfies a functional equation and has a well-formulated Riemann Hypothesis predicting the location of its nontrivial zeroes. Moreover, special values of $\zeta_{K}(s)$ contain rich arithmetic information which remains the subject of ongoing study. 

It is well-known that when $K/\Q$ is a quadratic extension, the zeta function factors in the ring of Dirichlet series as 
\begin{equation}\label{eq:quadzeta}
\zeta_{K}(s) = \zeta_{\Q}(s)L(\chi,s)
\end{equation}
where $L(\chi,s)$ is the Dirichlet $L$-function 
$$
L(\chi,s) = \sum_{n = 1}^{\infty} \frac{\chi(n)}{n^{s}}
$$
associated to the Dirichlet character $\chi = \legen{D}{\cdot}$, $D$ being the discriminant of $K$. In this paper, we lift this formula to an equivalence of linear functors in the abstract incidence algebra of the monoid $\N^{\times}$ using techniques in objective linear algebra: 

\begin{thm}[{Theorem~\ref{thm:mainthmred}}]
\label{thm:introred}
In the incidence algebra $I(\N^{\times})$, there is an equivalence of linear functors 
$$
\widetilde{N}_{*}\zeta_{K} + \zeta_{\Q}*\widetilde{L}(\chi)^{-} \cong \zeta_{\Q}*\widetilde{L}(\chi)^{+}. 
$$
\end{thm}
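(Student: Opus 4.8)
The plan is to unwind both sides into explicit families of finite sets indexed by $n\in\N^{\times}$, exhibit a bijection between them, and confirm that it is an equivalence of linear functors. After expanding the convolutions, the $n$-th component of $\widetilde N_{*}\zeta_{K}$ is the (finite, since only finitely many ideals have a given norm) fibre $N^{-1}(n)=\{\frak a\in I_{K}^{+}:N(\frak a)=n\}$, the $n$-th component of $\zeta_{\Q}*\widetilde L(\chi)^{\pm}$ is $\{d\mid n:\chi(d)=\pm1\}$, and $+$ is componentwise disjoint union. So the theorem reduces to producing, naturally in $n$, a bijection
$$
N^{-1}(n)\ \sqcup\ \{d\mid n:\chi(d)=-1\}\ \xrightarrow{\ \sim\ }\ \{d\mid n:\chi(d)=1\},
$$
whose decategorification is the classical coefficient identity $\#\{\frak a:N(\frak a)=n\}=\sum_{d\mid n}\chi(d)$ underlying \eqref{eq:quadzeta}.

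To build this bijection I would pass to prime factorizations. Both $\N^{\times}$ and $I_{K}^{+}$ are free commutative monoids on their primes, and $N$ carries the component of an ideal lying over a rational prime $p$ to a power of $p$; hence $N^{-1}(n)\cong\prod_{p\mid n}N_{p}^{-1}(p^{v_{p}(n)})$ and $\Div(n)=\prod_{p\mid n}\Div(p^{v_{p}(n)})$. The delicate point is that the sign decomposition of $L(\chi)$ is \emph{not} multiplicative, so I would first partition the primes dividing $n$ into those that split, are inert, or ramify in $K$, write $n=n_{\mathrm{sp}}n_{\mathrm{in}}n_{\mathrm{ram}}$ accordingly, and record that $\chi(d)=0$ unless $d\mid n_{\mathrm{sp}}n_{\mathrm{in}}$, while for such $d$ one has $\chi(d)=(-1)^{\Omega(\gcd(d,n_{\mathrm{in}}))}$ with $\Omega$ counting prime factors with multiplicity.

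Using the factorization of $p\orb_{K}$ in each case, the split part then contributes a common factor $\Div(n_{\mathrm{sp}})$ to all three sets (matching the split component of $N^{-1}(n)$ via $\frak a\mapsto$ the exponent of a chosen prime over each $p$, all of whose divisors have $\chi$-value $1$), and the ramified part contributes the single ideal $\prod\frak p^{v_{p}(n)}$ to the fibre, matching the "coprime to $n_{\mathrm{ram}}$" constraint on divisors. One is left needing a bijection $[\text{pt or }\emptyset]\sqcup O\cong E$, where $E,O\subseteq\Div(n_{\mathrm{in}})$ are the $\chi=1$ and $\chi=-1$ parity classes and the left-hand factor is the inert component of $N^{-1}(n)$ (a point iff every $v_{p}(n)$, $p\mid n_{\mathrm{in}}$, is even). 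When some such $v_{p}(n)$ is odd, $E\cong O$ by replacing that prime's exponent $c$ with $v_{p}(n)-c$; when all are even one needs $\{\ast\}\sqcup O\cong E$, which I would construct by induction on the number of inert primes, with base case $q^{2b}$ given by $\ast\mapsto1$ and $q^{2j-1}\mapsto q^{2j}$. Multiplying the three pieces yields the global bijection, and checking that the resulting family is natural — i.e.\ is genuinely an equivalence of linear functors in $I(\N^{\times})$ — is routine, since every map used is assembled from identities and reindexings on $\N^{\times}$.

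I expect the main obstacle to be precisely this interaction: because the $\pm$-decomposition of $\widetilde L(\chi)$ does not respect the Euler product, there is no purely formal tensor-factorization of the identity, so the ad hoc split into split/inert/ramified primes, the $\{\ast\}\sqcup O\cong E$ bijection in the "all exponents even" inert case, and the verification that these choices cohere into a natural equivalence are the steps that have to be carried out by hand.
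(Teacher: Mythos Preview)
Your argument is sound and does yield an equivalence of linear functors categorifying \eqref{eq:quadzeta}, but you have guessed at the meaning of $\widetilde L(\chi)^{\pm}$ and your guess is not the paper's definition. You take $\widetilde L(\chi)^{\pm}$ to be the indicator functors of $\{m:\chi(m)=\pm1\}$, so that the fibre of $\zeta_{\Q}*\widetilde L(\chi)^{\pm}$ over $n$ is $\{d\mid n:\chi(d)=\pm1\}$. The paper instead represents $\widetilde L(\chi)^{+}$ by the inclusion of the subset $\widetilde S_1^{+}\subset\N$ of integers having no ramified prime factor and \emph{each} inert valuation even, and $\widetilde L(\chi)^{-}$ by the inclusion of $\widetilde S_1^{-}$, the integers supported only on inert primes with each appearing exponent odd. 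These are prime-by-prime conditions, not a condition on the global parity $\Omega(\gcd(d,n_{\mathrm{in}}))$; for instance $q_1q_2$ with $q_1,q_2$ distinct inert primes lies in the paper's $\widetilde S_1^{-}$ even though $\chi(q_1q_2)=+1$.

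With the paper's splitting the bijection $\beta:\widetilde T_1\amalg\widetilde B^{-}\to\widetilde B^{+}$ is written down by a single closed formula over all of $\N$: an ideal $\frak a$ goes to the factorisation $(d_2\tau,d_0\tau)$ of $N(\frak a)$ that places the ramified and chosen-$\frak p$ split contributions in $d_2\tau$ and the $\overline{\frak p}$ split and inert contributions in $d_0\tau$, while a simplex $(a,b)\in\widetilde B^{-}$ is sent to the simplex obtained by decreasing every inert exponent of $b$ by one and increasing those of $a$ by one. There is no case distinction on whether some inert exponent of $n$ is odd, and no induction on the number of inert primes. Your route, by contrast, uses the most natural sign-splitting of $\chi$ and makes the decategorified identity $\#N^{-1}(n)=\sum_{d\mid n}\chi(d)$ maximally transparent, at the price of the extra parity combinatorics (the reflection $c\mapsto v_p(n)-c$ and the $\{\ast\}\sqcup O\cong E$ induction) in the inert block. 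Both prove a categorification of \eqref{eq:quadzeta}; the paper's buys a uniform global $\beta$, yours buys a cleaner $\widetilde L(\chi)^{\pm}$.
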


The symbol $\widetilde{N}_{*}$ is a pushforward map induced by the field norm $N = N_{K/\Q}$. The functors $\widetilde{L}(\chi)^{+}$ and $\widetilde{L}(\chi)^{-}$ in $I(\N^{\times})$ are defined in Section~\ref{sec:mainthm} in such a way that their images in the numerical incidence algebra satisfy $\widetilde{L}(\chi)^{+}(n) - \widetilde{L}(\chi)^{-}(n) = \chi(n)$, thereby recovering equation (\ref{eq:quadzeta}). In this way, Theorem~\ref{thm:introred} may be viewed as an categorical version of equation (\ref{eq:quadzeta}). We also prove a version of Theorem~\ref{thm:introred} in the incidence algebra $I(\N,\mid)$ of the division poset $(\N,\mid)$: 

\begin{thm}[{Theorem~\ref{thm:mainthm}}]
\label{thm:introthm}
In the incidence algebra $I(\N,\mid)$, there is an equivalence of linear functors 
$$
N_{*}\zeta_{K} + \zeta_{\Q}*L(\chi)^{-} \cong \zeta_{\Q}*L(\chi)^{+}. 
$$
\end{thm}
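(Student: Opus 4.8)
The plan is to reduce the asserted equivalence of linear functors to a coherent family of equivalences of finite discrete groupoids indexed by the intervals of $(\N,\mid)$, and to build that family by categorifying the classical multiplicativity proof of $\#\{\mathfrak a\mid N(\mathfrak a)=n\}=\sum_{d\mid n}\chi(d)$ -- equivalently, the Euler-product comparison $\zeta_K(s)=\zeta_\Q(s)L(\chi,s)$ carried out factor by factor. Concretely, I would first unwind the three functors on an interval $[1,n]$: since $\zeta_\Q$ is the terminal linear functor in $I(\N,\mid)$, one has $(\zeta_\Q*L(\chi)^\pm)[1,n]\simeq\coprod_{d\mid n}L(\chi)^\pm[d,n]$, while by the construction of the pushforward in Section~\ref{sec:mainthm}, $(N_*\zeta_K)[1,n]$ is equivalent to the discrete groupoid of integral ideals $\mathfrak a\subseteq\orb_K$ with $N(\mathfrak a)=n$. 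Since $N_*\zeta_K$ and $\zeta_\Q*L(\chi)^\pm$ are translation-invariant along $\N^\times$, an equivalence of linear functors can be assembled from a family of equivalences on the intervals $[1,n]$ that is natural in $n$ with respect to the interval restriction maps; writing $f(n)=\{\mathfrak a\mid N(\mathfrak a)=n\}$ and $g^\pm(n)=\coprod_{d\mid n}L(\chi)^\pm[d,n]$, the theorem then amounts to producing natural equivalences $f(n)\sqcup g^-(n)\simeq g^+(n)$.

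I would set this up multiplicatively and induct on the number $\omega(n)$ of distinct prime factors of $n$. Unique factorisation of ideals in $\orb_K$ gives natural equivalences $f(mn)\simeq f(m)\times f(n)$ for $\gcd(m,n)=1$; multiplicativity of $\chi$ together with the model of $L(\chi)^\pm$ from Section~\ref{sec:mainthm} gives $g^+(mn)\simeq\bigl(g^+(m)\times g^+(n)\bigr)\sqcup\bigl(g^-(m)\times g^-(n)\bigr)$ and $g^-(mn)\simeq\bigl(g^+(m)\times g^-(n)\bigr)\sqcup\bigl(g^-(m)\times g^+(n)\bigr)$, encoding the two sign patterns that make $\chi(e_1e_2)=\pm1$. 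For the base case $n=p^k$ (with $n=1$ trivial) I would split according to the behaviour of $p$ in $K$, which for a quadratic field is governed by the Kronecker symbol $\chi(p)=\legen{D}{p}$: if $p$ splits ($\chi(p)=1$) then $f(p^k)$ is the set of exponent pairs $(i,j)$ with $i+j=k$, $g^-(p^k)=\varnothing$, and $g^+(p^k)$ is the set of all divisors of $p^k$, so $(i,j)\mapsto p^i$ works; if $p$ ramifies ($\chi(p)=0$) then $f(p^k)$ and $g^+(p^k)$ are singletons and $g^-(p^k)=\varnothing$; if $p$ is inert ($\chi(p)=-1$) then $f(p^k)$ is a point for $k$ even and empty for $k$ odd, while $g^+(p^k),g^-(p^k)$ are the sets of even, respectively odd, powers of $p$ dividing $p^k$, and an explicit shift furnishes $f(p^k)\sqcup g^-(p^k)\simeq g^+(p^k)$. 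The inductive step, for $n=mp^k$ with $p\nmid m$, is then the formal computation
\begin{align*}
f(mp^k)\sqcup g^-(mp^k)
&\simeq f(m)f(p^k)\sqcup g^+(m)g^-(p^k)\sqcup g^-(m)g^+(p^k)\\
&\simeq f(m)f(p^k)\sqcup\bigl(f(m)\sqcup g^-(m)\bigr)g^-(p^k)\sqcup g^-(m)\bigl(f(p^k)\sqcup g^-(p^k)\bigr)\\
&\simeq \bigl(f(m)\sqcup g^-(m)\bigr)\bigl(f(p^k)\sqcup g^-(p^k)\bigr)\sqcup g^-(m)g^-(p^k)\\
&\simeq g^+(m)g^+(p^k)\sqcup g^-(m)g^-(p^k)\simeq g^+(mp^k),
\end{align*}
where juxtaposition denotes $\times$ and each $\simeq$ invokes an equivalence already built (the multiplicativity equivalences, the inductive hypothesis for $m$, and the base case for $p^k$). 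Assembling these over all $n$ yields the equivalence of linear functors in $I(\N,\mid)$, and passing to the numerical incidence algebra recovers equation~(\ref{eq:quadzeta}) together with the reduced statement of Theorem~\ref{thm:introred}.

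The step I expect to be the main obstacle is coherence: one must verify that the base-case bijections in all three splitting types and the multiplicativity equivalences are natural not merely at the endpoints $[1,n]$ but with respect to every interval restriction map of $I(\N,\mid)$, and that the associativity and interchange data needed to glue the inductive step genuinely commute. A conceptual subtlety behind this is that the coproduct $+$ does not distribute over the ``Euler product'' decomposition of $I(\N,\mid)$ over primes, so the required equivalence is honestly global rather than a product of local ones; the inductive manipulation above is exactly the device that repackages the local data, and matching the cross-terms in $g^\pm(mn)$ with the ones the induction produces is the delicate bookkeeping. Finally, one must confirm that the specific models of $N_*$, $L(\chi)^+$ and $L(\chi)^-$ fixed in Section~\ref{sec:mainthm} do satisfy the multiplicativity and prime-power equivalences invoked above -- this is where the hypothesis that $K/\Q$ is quadratic (so that $\chi$ is a quadratic Dirichlet character and each rational prime is split, ramified, or inert of residue degree $2$) is used essentially.
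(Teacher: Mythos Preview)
Your reduction to the intervals $[1,n]$ is where the argument breaks. The assertion that $N_{*}\zeta_{K}$ is translation-invariant along $\N^{\times}$ is false in the full incidence algebra $I(\N,\mid)$: the fibre of $N_{1}:T_{1}\to S_{1}$ over an interval $[a,b]$ is the set of pairs $[\frak{a},\frak{b}]$ with $N(\frak{a})=a$, $N(\frak{b})=b$ and $\frak{a}\mid\frak{b}$, and this genuinely depends on $a$ and $b$, not merely on $b/a$. For a split prime $p$ with $p\orb_{K}=\frak{p}\overline{\frak{p}}$, one has $|N_{1}^{-1}([1,p])|=2$ but $|N_{1}^{-1}([p,p^{2}])|=4$. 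The paper flags exactly this point in the remark opening Section~\ref{sec:mainthm}: $N_{*}\zeta_{K}\not\cong\widetilde{N}_{*}\zeta_{K}$, so $N_{*}\zeta_{K}$ does not lie in the reduced subalgebra. Consequently your construction, which only produces equivalences over the initial intervals $[1,n]$, at best yields Theorem~\ref{thm:introred} in $I(\N^{\times})\cong\widetilde{I}(\N,\mid)$, not Theorem~\ref{thm:introthm} in $I(\N,\mid)$.

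A related issue is that your multiplicativity identities for $g^{\pm}$ presuppose a particular sign-pattern decomposition of $\chi$, but the paper's $L(\chi)^{+}$ in the full algebra is not the indicator of a subset of $S_{1}$: it is built from a set $S_{1}^{+}\subseteq S_{1}\times S_{1}$ together with a noninjective map $j^{+}:S_{1}^{+}\to S_{1}$, precisely in order to match the extra multiplicity that $N_{*}\zeta_{K}$ acquires at split primes on noninitial intervals. Your prime-power base cases and the formal inductive shuffle are correct bookkeeping for the reduced theorem, but they do not see this structure. The paper's proof of Theorem~\ref{thm:mainthm} avoids both the translation-invariance hypothesis and any induction: it writes down $S_{1}^{\pm}$ and the pullbacks $B^{\pm}$ globally and then exhibits a single explicit bijection $\beta:T_{1}\amalg B^{-}\to B^{+}$ over $S_{1}$, with the split-prime bookkeeping handled by the second $S_{1}$-coordinate carried in $S_{1}^{+}$.
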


As $(\N,\mid)$ is the decalage of the monoid $\N^{\times}$ (see Example~\ref{ex:divposetdec}), $I(\N^{\times})$ is isomorphic to the reduced incidence subalgebra $\widetilde{I}(\N,\mid)\subseteq I(\N,\mid)$, connecting Theorem~\ref{thm:introthm} to the more familiar reduced incidence algebra of a poset which in this case is isomorphic to the ring of Dirichlet series. We consider Theorem~\ref{thm:introthm} to be the $1$-simplex version of a more general phenomenon, while Theorem~\ref{thm:introred} is the $0$-simplex version. We also relate the two theorems by means of a reduction technique (Theorem~\ref{thm:reduction}). 

The paper is organized as follows. In Section~\ref{sec:quadnf}, we review the basic properties of algebraic number fields needed for discussing Dedekind zeta functions. In Section~\ref{sec:incalgs}, we define {\it decomposition sets}, following \cite{gkt1}, and describe the construction of an incidence algebra for a decomposition set. The proofs of Theorems~\ref{thm:introred} and~\ref{thm:introthm} use the calculus of spans in objective linear algebra, which will be explained in Sections~\ref{sec:OLA} and~\ref{sec:OLAincalgs} before proceeding to the proofs of Theorems~\ref{thm:introred} and~\ref{thm:introthm} in Section~\ref{sec:mainthm}. Section~\ref{sec:reduction} discusses the relation between the theorems, while Section~\ref{sec:genquad} generalizes the theorems to an arbitrary quadratic extension of number fields. Finally, in Section~\ref{sec:relzeta} we introduce a new notion of {\it relative zeta function} and interpret Theorems~\ref{thm:introred} and~\ref{thm:introthm} in this language. 

The authors would like to thank their team members at the first Rethinking Number Theory (RNT) workshop - Karen Acquista, Changho Han and Alicia Lamarche - for their early contributions to this project and for many productive discussions along the way. The authors are grateful to the organizers of RNT - Heidi Goodson, Christelle Vincent and Mckenzie West - for the opportunity to begin this project, as well as to all the RNT participants. Finally, the second author would like to thank David Zureick-Brown for numerous suggestions that improved the article.

\section{Quadratic Number Fields}
\label{sec:quadnf}

Let $K/\Q$ be a number field with ring of integers $\orb_{K}$, norm map $N = N_{K/\Q}$ and Dedekind zeta function $\zeta_{K}(s)$. Then $\zeta_{K}(s)$ has the following product formula: 
$$
\zeta_{K}(s) = \prod_{\frak{p}} \frac{1}{1 - N(\frak{p})^{-s}}. 
$$
Here, the product is over all prime ideals $\frak{p}$. We denote the set of all ideals in $\orb_{K}$ by $I_{K}^{+}$ to distinguish it from $I_{K}$, which is commonly used for the group of fractional ideals. For a fixed prime integer $p\in\Z$ and a fixed prime ideal $\frak{p}$ lying over $p$, let $e(\frak{p}\mid p)$ and $f(\frak{p}\mid p)$ denote the ramification index and inertia degree of $\frak{p}$, respectively. Recall that $p$ has one of the following splitting types: 
\begin{itemize}
    \item {\bf ramified} if $e(\frak{p}\mid p) > 1$ for some $\frak{p}$ lying over $p$;
    \item {\bf split} if $e(\frak{p}\mid p) = f(\frak{p}\mid p) = 1$ for all $\frak{p}$ lying over $p$;
    \item {\bf inert} if $p\orb_{K}$ is itself a prime ideal. 
\end{itemize}
When the splitting types of all primes in the extension $K/\Q$ are known, one can factor $\zeta_{K}(s)$ as a Dirichlet series, that is, into local factors at each prime \emph{integer}. We state this for quadratic extensions below. .

\begin{prop}
\label{prop:quadsplitting}
Let $K/\Q$ be a quadratic number field. Then $\zeta_{K}(s)$ has prime factorization 
$$
\zeta_{K}(s) = \prod_{p} \zeta_{K,p}(s)
$$
where
$$
\zeta_{K,p}(s) = \begin{cases}
    \ds\frac{1}{1 - p^{-s}}, &\text{if $p$ is ramified}\\[1em]
    \ds\frac{1}{(1 - p^{-s})^{2}}, &\text{if $p$ is split}\\[1em]
    \ds\frac{1}{1 - p^{-2s}}, &\text{if $p$ is inert}.
\end{cases}
$$
\end{prop}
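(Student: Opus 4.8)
The plan is to establish the claimed factorization by computing, for each prime integer $p$, the contribution to $\zeta_K(s)$ of the Euler factors attached to the prime ideals $\frak p$ lying over $p$, and checking that these group together exactly as stated. The starting point is the Euler product $\zeta_K(s) = \prod_{\frak p}(1 - N(\frak p)^{-s})^{-1}$ already recalled above. Since every prime ideal $\frak p$ of $\orb_K$ lies over a unique prime $p = \frak p \cap \Z$, the product over all $\frak p$ can be reorganized as a product over rational primes $p$ of the finite subproducts $\zeta_{K,p}(s) := \prod_{\frak p \mid p}(1 - N(\frak p)^{-s})^{-1}$; one should remark briefly that this rearrangement is legitimate, either by appealing to absolute convergence for $\operatorname{Re}(s) > 1$ or by treating the identity purely formally in the ring of Dirichlet series (which is all that is needed downstream).

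Next I would record the basic fact that $N(\frak p) = p^{f(\frak p \mid p)}$ for any prime $\frak p$ over $p$, together with the fundamental identity $\sum_{\frak p \mid p} e(\frak p \mid p) f(\frak p \mid p) = [K : \Q] = 2$ for a quadratic field. This degree-$2$ constraint forces exactly three possibilities for the multiset of pairs $(e,f)$ over a given $p$: either a single prime with $(e,f) = (2,1)$ (the ramified case), or two distinct primes each with $(e,f) = (1,1)$ (the split case), or a single prime with $(e,f) = (1,2)$ (the inert case, where $p\orb_K$ is itself prime). These three cases are mutually exclusive and exhaustive, matching the trichotomy stated before the proposition.

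Then I would simply substitute into $\zeta_{K,p}(s) = \prod_{\frak p \mid p}(1 - p^{-f(\frak p\mid p)s})^{-1}$ in each of the three cases: the ramified case gives a single factor $(1 - p^{-s})^{-1}$; the split case gives $(1 - p^{-s})^{-1}\cdot(1 - p^{-s})^{-1} = (1 - p^{-s})^{-2}$; and the inert case gives the single factor $(1 - p^{-2s})^{-1}$. This reproduces the displayed formula for $\zeta_{K,p}(s)$, and taking the product over all $p$ yields the proposition.

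None of the steps here presents a serious obstacle; the content is entirely a matter of organizing standard algebraic number theory. The only point requiring a modicum of care is the justification that the reindexing of the Euler product from prime ideals to rational primes is valid, and the bookkeeping that the $efg$-relation genuinely pins down the three splitting types with the stated local Euler factors — in particular that "ramified" in degree $2$ necessarily means $e = 2, f = 1$ (so-called total ramification, with $g = 1$), since $e = 1$ would contradict ramification and $f = 2$ would force $e = 1$, landing in the inert case instead. Once that trichotomy is in hand, the remainder is a one-line substitution in each case.
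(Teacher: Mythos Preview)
Your argument is correct and is the standard proof: group the Euler product over prime ideals by the rational prime lying below, use $N(\frak p)=p^{f(\frak p\mid p)}$ and the degree identity $\sum_{\frak p\mid p} e(\frak p\mid p)f(\frak p\mid p)=2$ to pin down the three splitting types, and read off each local factor. The paper itself does not give a proof of this proposition; it is stated as a well-known fact and immediately followed by the generating-function interpretation of the local factors, so there is nothing to compare against beyond noting that your write-up supplies exactly the routine verification the paper omits.
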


We can view each of these local factors as a generating function for ideals over powers of $p$: 
$$
\zeta_{K,p}(s) = \sum_{n = 0}^{\infty} \#\{\frak{a} \mid N(\frak{a}) = p^{n}\}t^{n} = \begin{cases}
    1 + p^{-s} + p^{-2s} + \ldots, &\text{if $p$ is ramified}\\
    1 + 2p^{-s} + 3p^{-2s} + \ldots, &\text{if $p$ is split}\\
    1 + p^{-2s} + p^{-4s} + \ldots, &\text{if $p$ is inert}. 
\end{cases}
$$
This perspective will be explained further in the next section. 

\begin{ex}
A well-known example is when $K = \Q(i)$. In this case, 
$$
p \text{ is } \begin{cases}
    &\text{ramified when } p = 2\\
    &\text{split when } p\equiv 1\!\pmod{4}\\
    &\text{inert when } p\equiv 3\!\pmod{4}. 
\end{cases}
$$
This means the zeta function of $\Q(i)$ can be written 
$$
\zeta_{\Q(i)}(s) = \frac{1}{1 - 2^{-s}}\times\prod_{p\equiv 1\,(4)} \frac{1}{(1 - p^{-s})^{2}}\times\prod_{p\equiv 3\,(4)} \frac{1}{1 - p^{-2s}}. 
$$
\end{ex}

We also fix the following notation for convenience in later proofs. For an integer $a\in\Z$ (resp.~an ideal $\frak{a}\in I_{K}^{+}$) and a prime $p\in\Z$ (resp.~a prime ideal $\frak{p}\in\Spec\orb_{K}$), let $v_{p}(a)$ (resp.~$v_{\frak{p}}(\frak{a})$) denote the power of $p$ (resp.~$\frak{p}$) that divides $a$ (resp.~$\frak{a}$). That is, $a$ can be written $a = \prod_{p} p^{v_{p}(a)}$ where all but finitely many of the $v_{p}(a)$ are $0$ (and a similar expression for $\frak{a}$). 

Finally, when $K/\Q$ is a quadratic extension and $p$ is a prime in $\Z$ that splits in $\orb_{K}$, we fix a choice of prime ideal $\frak{p}$ lying over $p$ and denote by $\overline{\frak{p}}$ its Galois conjugate. The formulas proven in Section~\ref{sec:mainthm} will not depend on this choice, although their proofs will. As we will mention later, this choice can be viewed as a section of $\Spec\orb_{K}\rightarrow\Spec\Z$ over the locus of split primes $p\in\Z$.

\section{Abstract Incidence Algebras}
\label{sec:incalgs}

\renewcommand{\P}{\mathcal{P}}

In this section, we recall the definition of the incidence algebra for a poset, then give a brief survey of objective linear algebra and the construction of incidence algebras for decomposition sets, following \cite{gkt1} and \cite{kob}. 

\subsection{Incidence Algebras for Posets}
\label{sec:decompsets}

Let $(\P,\leq)$ be a poset. We will write $\P$ for short unless the ordering needs to be specified. For any elements $x,y\in\P$, define the {\it interval} $[x,y]$ to be the subposet 
$$
[x,y] = \{z\in\P\mid x\leq z\leq y\}. 
$$
We say $\P$ is {\it locally finite} if every interval in $\P$ is a finite set. Fix a field $k$. The incidence algebra of $\P$ is the $k$-algebra of functions on the set $\Int(\P)$ of intervals in $\P$, with multiplication defined by a certain convolution product. Following \cite{gkt1} and \cite{kob}, we first define the coalgebra of $\P$ and then take its dual algebra. 

\begin{defn}
The {\bf incidence coalgebra} of a locally finite poset is the free $k$-vector space on the set $\Int(\P)$, 
$$
C(\P) = \bigoplus_{e\in\Int(\P)} ke, 
$$
together with the comultiplication and counit maps defined by 
\begin{align*}
    \Delta : C(\P) &\longrightarrow C(\P)\otimes_{k}C(\P)\\
      [x,y] &\longmapsto \sum_{z\in [x,y]} [x,z]\otimes [z,y]\\
    \delta : C(\P) &\longrightarrow k\\
      [x,y] &\longmapsto \delta_{xy} = \begin{cases}
        1, & x = y\\
        0, & x\not = y
      \end{cases}
\end{align*}
and extended linearly. 
\end{defn}

\begin{defn}
The {\it incidence algebra} of a locally finite poset $\P$ is the dual $k$-vector space $I(\P) = \Hom_{k}(C(\P),k)$, together with the multiplication map defined by 
\begin{align*}
    * : I(\P)\otimes_{k}I(\P) &\longrightarrow I(\P)\\
      f\otimes g &\longmapsto \left (f*g : [x,y] \mapsto \sum_{z\in [x,y]} f([x,z])g([z,y])\right )
\end{align*}
(and extended linearly) and unit $\delta$. 
\end{defn}

For a locally finite poset $\P$, the incidence algebra $I(\P)$ (resp.~the coalgebra $C(\P)$) is always associative and unital (resp.~coassociative and counital); however, it need not be commutative (resp.~cocommutative). For an element $f\in I(\P)$, we will usually write $f(x,y) := f([x,y])$. There is a distinguished element $\zeta\in\P$, called the {\it zeta function}, defined by $\zeta(x,y) = 1$ for all $x\leq y$. The principle of {\it M\"{o}bius inversion} says that $\zeta$ is an invertible element of $I(\P)$ (cf.~\cite[Prop.~3.8]{kob}), whose inverse is called the M\"{o}bius function, defined by the following recursion: 
$$
\zeta^{-1} = \mu : [x,y] \longmapsto \begin{cases}
    1, & x = y\\
    -\sum_{z\in [x,y]} \mu(x,z), & x < y.
\end{cases}
$$

Unsurprisingly, the zeta functions of number theory and algebraic geometry correspond to zeta elements in certain incidence algebras, but to illustrate this connection, it is better to pass to the {\it reduced incidence algebra}. We say two intervals $[x,y],[z,w]\in\Int(\P)$ are isomorphic if they are isomorphic as subposets of $\P$; that is, if there is an order-preserving bijection $\P\rightarrow\P$ taking one interval to the other. 

\begin{defn}
For a locally finite poset $\P$, the {\bf reduced incidence algebra} $\widetilde{I}(\P)$ is the subalgebra of $I(\P)$ consisting of all functions $f : \Int(\P)\rightarrow k$ that are constant on isomorphism classes of intervals in $\P$. 
\end{defn}

Alternatively, the set of isomorphism classes of intervals forms a poset and $\widetilde{I}(\P)$ is the incidence algebra of this poset. By definition, $\zeta$ is always an element of $\widetilde{I}(\P)$. 

\begin{ex}
\label{ex:addposet}
For the poset $(\N_{0},\leq)$ of nonnegative integers with the standard ordering, every interval is isomorphic to one of the form $[0,n]$ for some $n\geq 0$. If $f\in\widetilde{I}(\N_{0},\leq)$, write $f(n) := f(0,n)$. Then $\widetilde{I}(\N_{0},\leq)$ is isomorphic to the algebra of power series: 
\begin{align*}
    \widetilde{I}(\N_{0},\leq) &\longrightarrow k[[t]]\\
    f &\longmapsto \sum_{n = 0}^{\infty} f(n)t^{n}. 
\end{align*}
In this case, the zeta element $\zeta\in\widetilde{I}(\N_{0},\leq)$ corresponds to the geometric power series 
$$
\zeta \longleftrightarrow \sum_{n = 0}^{\infty} t^{n} = \frac{1}{1 - t}. 
$$
\end{ex}

\begin{ex}
\label{ex:divposet}
For the division poset $(\N,\mid)$, every interval is of the form $[1,n]$ for some $n\geq 1$. As above, for each $f\in\widetilde{I}(\N,\mid)$ we can write $f(n) := f(1,n)$. This time, the reduced incidence algebra of $(\N,\mid)$ is isomorphic to the algebra of formal Dirichlet series: 
\begin{align*}
    \widetilde{I}(\N,\mid) &\longrightarrow DS(k)\\
    f &\longmapsto \sum_{n = 1}^{\infty} \frac{f(n)}{n^{s}}. 
\end{align*}
When $k = \C$, the zeta element $\zeta\in\widetilde{I}(\N,\mid)$ corresponds to the Riemann zeta function: 
$$
\zeta \longleftrightarrow \sum_{n = 1}^{\infty} \frac{1}{n^{s}}. 
$$
\end{ex}

\begin{ex}
\label{ex:divposetK}
Let $K/\Q$ be a number field with ring of integers $\orb_{K}$. The set of ideals in $\orb_{K}$ forms a poset under division, written $(I_{K}^{+},\mid)$. When $K = \Q$, this is just $(\N,\mid)$. In general, the field norm induces a poset map $N : (I_{K}^{+},\mid)\rightarrow (\N,\mid)$. This can be used to constructs Dirichlet series for any $f\in\widetilde{I}(I_{K}^{+},\mid)$: 
$$
f \longmapsto F(s) = \sum_{\frak{a}\in I_{K}^{+}} \frac{f(\frak{a})}{N(\frak{a})^{s}}. 
$$
In particular, the zeta element $\zeta\in\widetilde{I}(I_{K}^{+},\mid)$ maps to the Dedekind zeta function of $K$: 
$$
\zeta \longmapsto \sum_{\frak{a}\in I_{K}^{+}} \frac{1}{N(\frak{a})^{s}} = \sum_{n = 1}^{\infty} \frac{\#N^{-1}(n)}{n^{s}}. 
$$
\end{ex}

The operation $f\mapsto F(s)$, which we can think of as ``constructing a generating function for elements of $\widetilde{I}(I_{K}^{+},\mid)$'', is an example of a more general construction of pushforward between incidence algebras (see Example~\ref{ex:pushfwd}). The formalism of objective linear algebra in Section~\ref{sec:OLA} will make this operation easier to handle. Unfortunately, such pushforwards need not preserve convolution, but the images of many $f\in\widetilde{I}(I_{K}^{+},\mid)$ have an interesting factorizations in the ring of Dirichlet series. In fact, formula (\ref{eq:quadzeta}) is precisely such a statement about $\zeta_{K}$ and this will be formalized in Theorem~\ref{thm:introred}. 

Another notion that becomes clearer when stated using objective linear algebra is the passage from full incidence algebra to reduced subalgebra. This is explained further in the next section and again in Section~\ref{sec:reduction}. 

\subsection{Incidence Algebras for Decomposition Sets}
\label{sec:decompsets}

It is possible to construct incidence algebras for a far more general class of objects than posets. For example, Leroux and others \cite{ler,cll,dur} construct incidence algebras for so-called M\"{o}bius categories, which include locally finite posets as a special case via their nerves. Other examples of naturally-occurring incidence algebras include: the Hopf algebra of rooted trees \cite{but,ck}, the Fa\`{a} di Bruno bialgebra \cite{joy,dur} and the Hall algebra of an exact category \cite{dk}. 

All of the above situations have been subsumed by the concept of a {\it decomposition space}, introduced and studied by G\'{a}lvez-Carrillo, Kock and Tonks in \cite{gkt1,gkt2,gkt3,gkt-hla,gkt5}. In this paper, we will only need the notion of a {\it decomposition set}, so in this section we will explain the relevant definitions and properties from \cite{gkt1} in the category of simplicial sets. For a similar discussion of decomposition spaces, see \cite[Sec.~3.2-3.3]{kob}. 

Let $S$ be a simplicial set and fix a field of coefficients $k$. 

\begin{defn}
Let $S : \Delta^{op}\rightarrow\cat{Set}$ be a simplicial set which is locally of finite length (cf.~\cite{gkt2} for a precise definition). The {\bf numerical incidence coalgebra} of $S$ is the free $k$-vector space $C(S)$ on the set $S_{1}$ of $1$-simplices of $S$, together with the comultiplication map 
\begin{align*}
    \Delta : C(S) &\longrightarrow C(S)\otimes C(S)\\
    x &\longmapsto \sum_{\substack{\sigma\in S_{2} \\ d_{1}\sigma = x}} d_{2}\sigma\otimes d_{0}\sigma
\end{align*}
and counit 
\begin{align*}
    \delta : C(S) &\longrightarrow k\\
    x &\longmapsto \begin{cases}
      1, &\text{if $x$ is degenerate}\\
      0, &\text{if $x$ is nondegenerate}.
    \end{cases}
\end{align*}
The {\bf numerical incidence algebra} of $S$ is the dual vector space $I(S) = \Hom(C(S),k)$ together with the multiplication map 
\begin{align*}
    I(S)\otimes I(S) &\longrightarrow I(S)\\
    f\otimes g &\longmapsto \left (f*g : x\mapsto \sum_{\substack{\sigma\in S_{2} \\ d_{1}\sigma = x}} f(d_{2}\sigma)g(d_{0}\sigma)\right )
\end{align*}
and unit $\delta$. 
\end{defn}

To distinguish it from the abstract incidence algebra to be defined below, we will sometimes denote the numerical incidence algebra of $S$ by $I_{\#}(S)$. 

For any simplicial set $S$, the {\it zeta function} of $S$ is the element $\zeta\in I(S)$ defined by $\zeta : x\mapsto 1$ for all $x\in S_{1}$ and extended linearly. In general, $I(S)$ is neither associative or unital as a $k$-algebra. However, when $S$ is a {\it decomposition set}, $I(S)$ is an associative, unital $k$-algebra \cite[Sec.~5.3]{gkt1}. 

The notion of a reduced incidence algebra is not always available for a general decomposition set. However, the authors in \cite{gkt5} describe a general situation in which a reduced incidence algebra exists, namely when $S$ is the {\it decalage} of another decomposition set: $S = \Dec_{\perp}(\widetilde{S})$. By definition, $\Dec_{\perp}(\widetilde{S})$ is a simplicial set whose simplices are shifted by $1$ dimension: $\Dec_{\perp}(\widetilde{S})_{k} = \widetilde{S}_{k + 1}$. For our purposes, this means that when $S = \Dec_{\perp}(\widetilde{S})$, $\widetilde{S}_{1} = S_{0}$, which makes it possible to relate the reduced incidence algebra $\widetilde{I}(S)$ to the full (numerical) incidence algebra $I(\widetilde{S})$. In lieu of a general explanation of the reduction procedure, which is already given in \cite[Sec.~1.5.3]{gkt5}, we focus on three key examples. 

\begin{ex}
Let $S = (\N_{0},\leq)$ be the poset from Example~\ref{ex:addposet} and let $\widetilde{S} = \N_{0}^{+}$ be the additive monoid of nonnegative integers. As both $S$ and $\widetilde{S}$ are categories, hence decomposition sets \cite[Prop.~3.7]{gkt1}, they admit numerical incidence algebras $I(S) = I(\N_{0},\leq)$ and $I(\widetilde{S}) = I(\N_{0}^{+})$. By \cite[Lem.~2.1.2]{gkt5}, $(\N_{0},\mid) \cong \Dec_{\perp}(\N_{0}^{+})$ and this induces an algebra homomorphism 
$$
I(\N_{0}^{+}) \longrightarrow I(\N_{0},\leq)
$$
which is an isomorphism onto the reduced incidence subalgebra $\widetilde{I}(\N_{0},\leq)$. Explicitly, $(\N_{0}^{+})_{1} \cong (\N_{0},\leq)_{0}$ which is further equivalent to the set of isomorphism classes of intervals in $(\N_{0},\leq)$, so $f\in\widetilde{I}(\N_{0},\leq)$ is already a function on the $1$-simplices of $\N_{0}^{+}$; it's easy to check that the algebra structures agree. 
\end{ex}

\begin{ex}
\label{ex:divposetdec}
Let $S = (\N,\mid)$ be the division poset from Example~\ref{ex:divposet} and let $\widetilde{S} = \N^{\times}$ be the monoid of positive integers under multiplication. As above, $S$ and $\widetilde{S}$ admit numerical incidence algebras $I(S) = I(\N,\mid)$ and $I(\widetilde{S}) = I(\N^{\times})$. By \cite[Lem.~2.2.2]{gkt5}, $(\N,\mid) \cong \Dec_{\perp}(\N^{\times})$ and this induces an algebra homomorphism 
$$
I(\N^{\times}) \longrightarrow I(\N,\mid)
$$
which is an isomorphism onto the reduced subalgebra $\widetilde{I}(\N,\mid)$. Moreover, this is consistent with the factorization of $(\N,\mid)$ into prime factors (see \cite[Ex.~3.10]{kob}): 
$$
I(\N^{\times}) \cong \bigotimes_{p} I(\{p^{k}\}^{\times}) \cong \bigotimes_{p} \widetilde{I}(\{p^{k}\},\mid) \cong \widetilde{I}(\N,\mid). 
$$
Explicitly, $(\N^{\times})_{1} \cong (\N,\mid)_{0}$ which is further equivalent to the set of isomorphism classes of intervals in $(\N,\mid)$ and as above, it's easy to check that the algebra structures agree. 
\end{ex}

\begin{ex}
\label{ex:Kdivposetdec}
For a number field $K/\Q$, let $S = (I_{K}^{+},\mid)$ be the ideal division poset from Example~\ref{ex:divposetK} and let $\widetilde{S} = I_{K}^{\times}$ be the monoid of ideals in $\orb_{K}$ under ideal multiplication. A similar proof to that of \cite[Lem.~2.2.2]{gkt5} shows that $(I_{K}^{+},\mid) \cong \Dec_{\perp}(I_{K}^{\times})$ and the induced algebra homomorphism 
$$
I(I_{K}^{\times}) \longrightarrow I(I_{K}^{+},\mid)
$$
is an isomorphism onto the reduced subalgebra $\widetilde{I}(I_{K}^{+},\mid)$. Explicitly, $(I_{K}^{\times})_{1} \cong (I_{K}^{+},\leq)_{0}$ which is further equivalent to the set of isomorphism classes of intervals in $(I_{K}^{+},\mid)$ and the algebra structures agree as before. 
\end{ex}

\subsection{Objective Linear Algebra}
\label{sec:OLA}

It is possible to further categorify the incidence algebras constructed above using Lawvere's {\it objective linear algebra} \cite{law,lm}. Informally, objective linear algebra is ``linear algebra with sets''. Here is a brief summary of how some of the familiar concepts in linear algebra are ``objectified'' in the category of sets. For more details, see \cite{gkt-hla} and \cite[Sec.~3.4]{kob}. 

First, a scalar field $k$ is replaced by the category $\cat{Set}$ of sets. Addition and multiplication of scalars are incarnated as the set-theoretic operations $B + C := B\coprod C$ and $B\times C$ (the cartesian product). Notice that when $B$ and $C$ are finite sets, taking cardinalities recovers ordinary addition and scalar multiplication, but subtraction and inverses don't always lift to $\cat{Set}$. A set $B$ is also thought of as a basis, but we discard the vector space it generates and just remember the basis itself. 

Next, a vector is replaced by a set map $v : X\rightarrow B$. The fibers of $v$ can be thought of as the components of the vector, indexed by $B$, whereby taking cardinality recovers the familiar coordinate description of a vector in the basis $B$. Accordingly, a vector space with basis $B$ is replaced by a slice category $\cat{Set}_{/B}$. Addition is once again disjoint union: $(v : X\rightarrow B) + (w : Y\rightarrow B) := (v\amalg w : X\coprod Y\rightarrow B)$, while scalar multiplication is the composition $Y \times (v : X\rightarrow B) := (Y\times X\xrightarrow{\operatorname{id}\times v} Y\times B\xrightarrow{\proj_{B}} B)$. 

After choosing bases, a matrix is a ``row of column vectors'' (or equivalently, a ``column of row vectors''). This is represented by a span in $\cat{Set}$: 
$$
\roofx{B}{C}{M}{s}{t}. 
$$
Then a linear map with a given matrix is replaced by a {\it linear functor} $\varphi : \cat{Set}_{/B}\rightarrow\cat{Set}_{/C}$, which is a functor of the form $\varphi = t_{!}s^{*}$ for some span as above. In plain terms, a linear map sums the values of $t$ along the fibers of $s$. 

Perhaps illustrating the notion of linear functor more clearly, matrix-vector multiplication is replaced by span composition: for a vector $v : X\rightarrow B$ and a linear functor $\varphi : \cat{Set}_{/B}\rightarrow\cat{Set}_{/C}$, the product $\varphi(v)$ is the composition on the right side in the following diagram: 
$$
\left ( \tikz[xscale=3,yscale=1.7,baseline=30]{
    \node at (1,0) (b) {$B$};
    \node at (2,0) (c) {$C$};
    \node at (.5,.8) (x) {$X$};
    \node at (1.5,.8) (y) {$M$};
    \node at (1,1.6) (z) {$P$};
    \draw[->] (z) -- (x);
    \draw[->] (z) -- (y);
    \draw[->] (x) -- (b) node[right,pos=.4] {$v$};
    \draw[->] (y) -- (b) node[left,pos=.4] {$s$};
    \draw[->] (y) -- (c) node[right,pos=.4] {$t$};
    \draw[->] (z) to[out=-10,in=100] (c);
    \node at (1.8,1.4) {$\varphi(v)$};
}\right )
$$
Here, $P$ is the pullback in the square. 

Many familiar operations on vector spaces have analogues in objective linear algebra, including the following which will be useful to us later. The tensor product of vector spaces is replaced by $\cat{Set}_{/B}\otimes\cat{Set}_{/C} := \cat{Set}_{/B\times C}$. The vector space $\Hom(V,W)$ is replaced by the category $\Lin(B,C) := \Fun^{L}(\cat{Set}_{/B},\cat{Set}_{/C})$ of colimit-preserving linear functors from $\cat{Set}_{/B}$ to $\cat{Set}_{/C}$. This is an objective vector space by \cite[2.10]{gkt-hla}. The dual of a vector space is replaced by the category $(\cat{Set}_{/B})^{*} := \Fun(\cat{Set}_{/B},\cat{Set})$. From the natural equivalence $\Fun(\cat{Set}_{/B},\cat{Set}) \simeq \cat{Set}_{/B}$, we see that $(\cat{Set}_{/B})^{*}$ is again an objective vector space.

\subsection{Incidence Algebras}
\label{sec:OLAincalgs}

With these concepts in hand, it is possible to define the abstract incidence algebra for a decomposition set at the objective level. The construction of an abstract incidence algebra for any decomposition \emph{space} is carried out in \cite{gkt1}, using a generalization of objective linear algebra called {\it homotopy linear algebra}. Since the proofs of Theorems~\ref{thm:introred} and~\ref{thm:introthm} only involve simplicial sets, we only require tools from objective linear algebra. 

\begin{defn}
For a simplicial set $S$, the {\bf incidence coalgebra} of $S$ is the objective vector space $C(S) = \cat{Set}_{/S_{1}}$ equipped with a  comultiplication linear functor $\Delta : C(S)\rightarrow C(S)\otimes C(S)$, represented by the span 
$$
\Delta = \left (\roofx{S_{1}}{S_{1}\times S_{1}}{S_{2}}{d_{1}}{(d_{2},d_{0})}\right ), 
$$
as well as a counit linear functor $\delta : C(S)\rightarrow\cat{Set}$, represented by the span 
$$
\delta = \left (\roofx{S_{1}}{*}{S_{0}}{s_{0}}{}\right ). 
$$
The {\bf incidence algebra} of $S$ is the dual vector space $I(S) := C(S)^{*} = \Fun(\cat{Set}_{/S_{1}},\cat{Set})$, equipped with unit $\delta$ and a multiplication linear functor $m : I(S)\otimes I(S)\rightarrow I(S)$ defined as follows. For $f,g\in I(S)$, their product $m(f,g)$ is the composition $m(f,g) : \cat{Set}_{/S_{1}}\xrightarrow{\Delta}\cat{Set}_{/S_{1}}\otimes\cat{Set}_{/S_{1}}\xrightarrow{f\otimes g}\cat{Set}\otimes\cat{Set}\xrightarrow{\sim}\cat{Set}$. This may also be represented by the outer span in the diagram 
$$
m(f,g) = \left ( \tikz[xscale=3,yscale=1.7,baseline=30]{
        \node at (0,0) (a) {$S_{1}$};
        \node at (1,0) (b) {$S_{1}\times S_{1}$};
        \node at (2,0) (c) {$*$};
        \node at (.5,.8) (x) {$S_{2}$};
        \node at (1.5,.8) (y) {$S_{1}\times S_{1}$};
        \node at (1,1.6) (z) {$P$};
        \draw[->] (z) -- (x);
        \draw[->] (z) -- (y);
        \draw[->] (x) -- (a) node[left,pos=.3] {$d_{1}$};
        \draw[->] (x) -- (b) node[right,pos=.1] {$(d_{2},d_{0})$};
        \draw[->] (y) -- (b) node[right,pos=.8] {$f\times g$};
        \draw[->] (y) -- (c);
    }\right )
$$
where the top square is a pullback square. 
\end{defn}

When $S$ is a decomposition set, \cite[Thm.~7.4]{gkt1} shows that $I(S)$ is an associative, unital monoid object in the category of objective vector spaces and linear functors; that is, $I(S)$ is an objective algebra over $\cat{Set}$. 

\begin{ex}
\label{ex:pushfwd}
One important construction we will need in the next section is the notion of pushforward between incidence algebras. Suppose $S$ and $T$ are decomposition sets and $F : T_{1}\rightarrow S_{1}$ is a set map on their $1$-simplices. This induces a pushforward map between their incidence algebras $F_{*} : I(T)\rightarrow I(S)$ defined on spans as follows: 
$$
\varphi = \left (\roofx{T_{1}}{*}{M}{f}{}\right ) \quad \longmapsto \quad F_{*}\varphi = \left (\roofx{S_{1}}{*}{M}{F\circ f}{}\right ). 
$$
If $F : T_{1}\rightarrow S_{1}$ is of the form $F = G_{1}$ for a simplicial map $G : T\rightarrow S$, we will write $F_{*}$ or $G_{*}$ to mean this pushforward of spans. 
\end{ex}

\renewcommand{\P}{\mathbb{P}}

\section{Proofs of the Main Theorems}
\label{sec:mainthm}

Fix the following notation. For a quadratic number field $K/\Q$, let $D$ be the discriminant of $K$ and let $\chi = \chi_{D} = \legen{D}{\cdot}$ be the quadratic Dirichlet character mod $D$ attached to $K$. Set $S = (\N,\mid)$ and $T = (I_{K}^{+},\mid)$ and let $N : T\rightarrow S$ be the simplicial map induced by the norm $N_{K/\Q}$. By Example~\ref{ex:pushfwd}, $N$ induces a pushforward map on incidence algebras $N_{*} : I(T)\rightarrow I(S),f\mapsto N_{*}f$. 

Likewise, set $\widetilde{S} = \N^{\times}$ and $\widetilde{T} = I_{K}^{\times}$, so that $I(\widetilde{S})$ is isomorphic to the reduced incidence algebra of $S$ (Example~\ref{ex:divposetdec}) and $I(\widetilde{T})$ is isomorphic to the reduced incidence algebra of $T$ (Example~\ref{ex:Kdivposetdec}). We will also write $\widetilde{N} : \widetilde{T}\rightarrow\widetilde{S}$ and $\widetilde{N}_{*} : I(\widetilde{T})\rightarrow I(\widetilde{S})$ for the norm map and norm pushforward, respectively, on these monoids and their incidence algebras. 

\begin{rem}
It is not true that $\widetilde{N}_{*} : I(\widetilde{T})\rightarrow I(\widetilde{S})$ is the restriction of the norm pushforward on the ``full'' incidence algebra $I(T)$ to the reduced subalgebra. For example, the zeta functor $\zeta_{K}\in I(T)$ lies in the reduced subalgebra $\widetilde{I}(T)\cong I(\widetilde{T})$ but $N_{*}\zeta_{K} \not\cong \widetilde{N}_{*}\zeta_{K}$. This is already easy to verify in the numerical incidence algebra. 
\end{rem}

Formulas involving Dirichlet series and $L$-functions are almost always easier to understand by studying each prime factor individually. In this vein, in order to prove Theorems~\ref{thm:introred} and~\ref{thm:introthm}, we first prove a ``local version'' of each. Unfortunately, the local formulas we establish do not automatically assemble into the global formulas in Theorems~\ref{thm:introred} and~\ref{thm:introthm} (see Remark~\ref{rem:redprod}). However, the local formulas are useful for illustrating the techniques that will be used to prove Theorems~\ref{thm:introred} and~\ref{thm:introthm}. 

The rest of Section~\ref{sec:mainthm} is divided as follows: the local and global formulas in the reduced incidence algebra $\widetilde{I}(\N,\mid)$ are proven in Section~\ref{sec:mainthmred}; the local and global formulas in the full incidence algebra $I(\N,\mid)$ are proven in Section~\ref{sec:mainthmfull}; the relation between the theorems is established in Section~\ref{sec:reduction}; and a generalization to arbitrary quadratic extensions is proven in Section~\ref{sec:genquad}. 

\subsection{The Reduced Version}
\label{sec:mainthmred}

For simplicity of notation, fix a prime $p$ and let $\widetilde{S} = \{p^{k}\}^{\times}$ be the submonoid of $p$th powers in $\N^{\times}$ and let $\widetilde{T} = \{\frak{a} : N(\frak{a}) = p^{k},k\geq 0\}^{\times}$ be the submonoid of $I_{K}^{\times}$ consisting of ideals lying above powers of $p$. Also let $\widetilde{N} : \widetilde{T}\rightarrow \widetilde{S}$ be the restriction of the norm map to these subposets and let $\zeta_{\Q,p}$ and $\zeta_{K,p}$ be the zeta functions in $I(\widetilde{S})$ and $I(\widetilde{T})$, respectively. 

\begin{thm}[{Local Version of Theorem~\ref{thm:introred}}]
\label{thm:mainthmredloc}
For each prime $p$, in the incidence algebra $I(\{p^{k}\}^{\times})$, there is an equivalence of linear functors 
$$
\widetilde{N}_{*}\zeta_{K,p} + \zeta_{\Q,p}*\widetilde{L}_{p}(\chi)^{-} \cong \zeta_{\Q,p}*\widetilde{L}_{p}(\chi)^{+}. 
$$
\end{thm}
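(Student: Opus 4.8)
The plan is to verify the equivalence prime-by-prime according to the three splitting types of $p$ in $K/\Q$, using the explicit description of the local zeta factors from Proposition~\ref{prop:quadsplitting} and working entirely inside the one-generated monoid $\{p^{k}\}^{\times}$, whose $1$-simplices are indexed by $\N_{0}$ (sending $p^{k}\mapsto k$). Under this identification $I(\{p^{k}\}^{\times})$ is the objective version of $k[[t]]$ (Example~\ref{ex:addposet}), and the zeta element $\zeta_{\Q,p}$ is the span $\N_{0}\to\N_{0}$ given by the identity (every power of $p$ contributes one basis element). First I would spell out, as spans over $\N_{0}$, what each of the four players is. The functor $\widetilde{L}_{p}(\chi)^{+}$ and $\widetilde{L}_{p}(\chi)^{-}$ must be defined (this is the content of ``defined in Section~\ref{sec:mainthm}'') so that, on cardinalities, $\widetilde{L}_{p}(\chi)^{+}(p^{k})-\widetilde{L}_{p}(\chi)^{-}(p^{k})=\chi(p)^{k}$; concretely, in the split case $\chi(p)=1$ so $\widetilde{L}^{+}$ is ``one basis element in every degree'' and $\widetilde{L}^{-}$ is empty; in the inert case $\chi(p)=-1$ so $\widetilde{L}^{+}$ has a basis element in even degrees and $\widetilde{L}^{-}$ in odd degrees; in the ramified case $\chi(p)=0$ so $\widetilde{L}^{+}=\widetilde{L}^{-}$ is ``one basis element in every positive degree'' (or some such bookkeeping making the difference vanish). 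The pushforward $\widetilde{N}_{*}\zeta_{K,p}$ is, by Example~\ref{ex:pushfwd}, the span $\N_{0}\leftarrow\{\frak{a}:N(\frak a)=p^{k}\}\to *$ with the left leg recording $v_{p}$; its fiber over degree $k$ has cardinality $\#\{\frak a:N(\frak a)=p^{k}\}$, which by the local factors is $1$ (ramified), $k+1$ (split), or $[k\text{ even}]$ (inert).

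Next I would compute the convolution $\zeta_{\Q,p}*\widetilde{L}_{p}(\chi)^{\pm}$ objectively, i.e.\ form the pullback in the incidence-algebra multiplication span (the $2$-simplices of $\{p^{k}\}^{\times}$ over a given $1$-simplex of degree $n$ are the pairs $(i,j)$ with $i+j=n$), so that the fiber of $\zeta_{\Q,p}*\widetilde{L}^{\pm}_{p}(\chi)$ over degree $n$ is $\coprod_{i+j=n}(\text{fiber of }\widetilde L^{\pm}\text{ over }j)$. In the split case this gives, over degree $n$, a set of size $n+1$ for $\zeta_{\Q,p}*\widetilde L^{+}$ and the empty set for $\zeta_{\Q,p}*\widetilde L^{-}$, matching $\widetilde N_{*}\zeta_{K,p}$ exactly and providing the natural isomorphism degreewise. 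In the inert case $\zeta_{\Q,p}*\widetilde L^{+}$ over degree $n$ has size $\lceil (n+1)/2\rceil$ and $\zeta_{\Q,p}*\widetilde L^{-}$ has size $\lfloor (n+1)/2\rfloor$, while $\widetilde N_{*}\zeta_{K,p}$ over degree $n$ has size $1$ if $n$ is even and $0$ if $n$ is odd; one checks $\lfloor(n+1)/2\rfloor + [n\text{ even}] = \lceil(n+1)/2\rceil$, and I would exhibit an explicit bijection of the two sides (e.g.\ on the $\frak a$-side send the unique $\frak a$ with $N(\frak a)=p^{n}$, $n$ even, to the ``top'' summand). The ramified case is the easiest: $\widetilde L^{+}=\widetilde L^{-}$, so $\zeta_{\Q,p}*\widetilde L^{+}=\zeta_{\Q,p}*\widetilde L^{-}$, and $\widetilde N_{*}\zeta_{K,p}$ over degree $n$ has size $1$, so the left side has one ``extra'' basis element in each degree — I'd need $\widetilde L^{\pm}$ set up so that this matches, which pins down exactly how $\widetilde L_{p}(\chi)^{\pm}$ should be defined in the $\chi(p)=0$ case. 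Then I would assemble the three cases into a single equivalence of linear functors in $I(\{p^{k}\}^{\times})$, natural in the argument, not merely an equality of cardinalities.

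The main obstacle, I expect, is bookkeeping at the objective level rather than any deep point: one must produce genuine natural isomorphisms of spans (equivalently, bijections of fibers compatible with the structure maps), not just equalities after taking cardinality, and one must do this uniformly enough that the local isomorphisms are manifestly the restrictions of the eventual global isomorphism of Theorem~\ref{thm:introred}. A secondary subtlety is that the formula is additive, $A + B \cong C$, rather than multiplicative, because the pushforward $\widetilde N_{*}$ does not respect convolution (as flagged in the Remark); so I cannot simply manipulate it as $\widetilde N_{*}\zeta_{K,p} \cong \zeta_{\Q,p}*(\widetilde L^{+}-\widetilde L^{-})_{p}$ — subtraction is not available in $\cat{Set}$ — and instead must move one term across as a disjoint union and match the two \emph{positive} sides. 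Finally, a mild point of care: the definitions of $\widetilde{L}_{p}(\chi)^{\pm}$ depend, in the split case, on the chosen section $\Spec\orb_{K}\to\Spec\Z$ over the split locus (the choice of $\frak p$ over $p$), so I should note the isomorphism produced depends on that choice while the statement does not, and verify the split-case bijection is compatible with the chosen $\frak p$.
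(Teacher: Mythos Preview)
Your overall strategy matches the paper's exactly: split into the three cases, write each linear functor as an explicit span over $\widetilde{S}_{1}\cong\N_{0}$, compute the convolution as a pullback over $2$-simplices, and exhibit a fiberwise bijection $\beta$. Your split and inert cases are set up correctly and agree with the paper's constructions (including the dependence of $\beta$, though not of $\widetilde{L}_{p}(\chi)^{\pm}$ themselves, on the choice of $\frak{p}$ over a split $p$).

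The genuine gap is the ramified case. Your proposal $\widetilde{L}_{p}(\chi)^{+}=\widetilde{L}_{p}(\chi)^{-}$ cannot work, and you notice this yourself: it would force $\widetilde{N}_{*}\zeta_{K,p}$ to be empty, which it is not. The error is in the target numerics. When $\chi(p)=0$ the local Euler factor of $L(\chi,s)$ is $(1-\chi(p)p^{-s})^{-1}=1$, not $0$; as a sequence of coefficients this is $1$ in degree $0$ and $0$ elsewhere, so the correct objective lift is $\widetilde{L}_{p}(\chi)^{+}=\{p^{0}\}\hookrightarrow\widetilde{S}_{1}$ (the unit $\delta$) and $\widetilde{L}_{p}(\chi)^{-}=\varnothing$. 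With this definition $\zeta_{\Q,p}*\widetilde{L}_{p}(\chi)^{+}\cong\zeta_{\Q,p}$, the $\widetilde{L}^{-}$ term vanishes, and the required isomorphism is simply $\widetilde{T}_{1}\to\widetilde{B}^{+}$, $\frak{p}^{k}\mapsto(p^{k},p^{0})$, matching $\widetilde{N}(\frak{p}^{k})=p^{k}$. This is exactly how the paper handles it. Once you fix this, your proof goes through and is essentially the paper's.
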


\begin{proof}
We first define the terms appearing in the local formula. By Example~\ref{ex:pushfwd}, the first term, $\widetilde{N}_{*}\zeta_{K,p}$, is represented by the span 
$$
\widetilde{N}_{*}\zeta_{K,p} = \left (\tikz[baseline=10]{
  \node at (-1,0) (a) {$\widetilde{S}_{1}$};
  \node at (1,0) (b) {$*$};
  \node at (0,1) (top) {$\widetilde{T}_{1}$};
  \draw[->] (top) -- (a) node[above,pos=.7] {$\widetilde{N}_{1}\;\;\;$};
  \draw[->] (top) -- (b);
}\right ). 
$$
The functors $\widetilde{L}_{p}(\chi)^{+}$ and $\widetilde{L}_{p}(\chi)^{-}$ will be defined by spans 
$$
\widetilde{L}_{p}(\chi)^{+} = \left (\roofx{\widetilde{S}_{1}}{*}{\widetilde{S}_{1}^{+}}{j^{+}}{}\right ) \quad\text{and}\quad \widetilde{L}_{p}(\chi)^{-} = \left (\roofx{\widetilde{S}_{1}}{*}{\widetilde{S}_{1}^{-}}{j^{-}}{}\right )
$$
for ``objective vectors'' $j^{+} : \widetilde{S}_{1}^{+}\rightarrow\widetilde{S}_{1}$ and $j^{-} : \widetilde{S}_{1}^{-}\rightarrow\widetilde{S}_{1}$ to be defined below. The convolutions $\zeta_{\Q,p}*\widetilde{L}_{p}(\chi)^{-}$ and $\zeta_{\Q,p}*\widetilde{L}_{p}(\chi)^{+}$ in the formula will then be defined by the spans
\begin{align*}
    \zeta_{\Q,p}*\widetilde{L}_{p}(\chi)^{-} &= \left ( \tikz[xscale=3,yscale=1.7,baseline=30]{
        \node at (0,0) (a) {$\widetilde{S}_{1}$};
        \node at (1,0) (b) {$\widetilde{S}_{1}\times\widetilde{S}_{1}$};
        \node at (2,0) (c) {$*$};
        \node at (.5,.8) (x) {$\widetilde{S}_{2}$};
        \node at (1.5,.8) (y) {$\widetilde{S}_{1}\times\widetilde{S}_{1}^{-}$};
        \node at (1,1.6) (z) {$\widetilde{B}^{-}$};
        \draw[->] (z) -- (x) node[left,pos=.3] {$\alpha^{-}$};
        \draw[->] (z) -- (y);
        \draw[->] (x) -- (a) node[left,pos=.3] {$d_{1}$};
        \draw[->] (x) -- (b) node[right,pos=.1] {$(d_{2},d_{0})$};
        \draw[->] (y) -- (b) node[right,pos=.8] {$id\times j^{-}$};
        \draw[->] (y) -- (c);
    }\right )\\
    \text{and}\quad \zeta_{\Q,p}*\widetilde{L}_{p}(\chi)^{+} &= \left ( \tikz[xscale=3,yscale=1.7,baseline=30]{
        \node at (0,0) (a) {$\widetilde{S}_{1}$};
        \node at (1,0) (b) {$\widetilde{S}_{1}\times\widetilde{S}_{1}$};
        \node at (2,0) (c) {$*$};
        \node at (.5,.8) (x) {$\widetilde{S}_{2}$};
        \node at (1.5,.8) (y) {$\widetilde{S}_{1}\times\widetilde{S}_{1}^{+}$};
        \node at (1,1.6) (z) {$\widetilde{B}^{+}$};
        \draw[->] (z) -- (x) node[left,pos=.3] {$\alpha^{+}$};
        \draw[->] (z) -- (y);
        \draw[->] (x) -- (a) node[left,pos=.3] {$d_{1}$};
        \draw[->] (x) -- (b) node[right,pos=.1] {$(d_{2},d_{0})$};
        \draw[->] (y) -- (b) node[right,pos=.8] {$id\times j^{+}$};
        \draw[->] (y) -- (c);
    }\right )
\end{align*}
We now divide the proof into three cases corresponding to the splitting type of $p$ in $\orb_{K}$. 

First, suppose $p$ ramifies in $\orb_{K}$. Let $\widetilde{S}_{1}^{+} = *$ and $j^{+} : \widetilde{S}_{1}^{+}\hookrightarrow\widetilde{S}_{1}$ be the inclusion of the point as the neutral element $1 = p^{0}$. Also, let $\widetilde{S}_{1}^{-} = \varnothing$ with $j^{-} : \widetilde{S}_{1}^{-}\rightarrow\widetilde{S}_{1}$ the unique map. Then $\widetilde{L}_{p}(\chi)^{-}$ and $\zeta_{\Q,p}*\widetilde{L}_{p}(\chi)^{-}$ are both the empty functor, so the local formula in the ramified case reduces to 
\begin{equation}\label{eq:localramred}
\widetilde{N}_{*}\zeta_{K,p} \cong \zeta_{\Q,p}*\widetilde{L}_{p}(\chi)^{+}. 
\end{equation}
For convenience, we abbreviate the span representing the right side of this formula as 
$$
\zeta_{\Q,p}*\widetilde{L}_{p}(\chi)^{+} = \left (\roofx{\widetilde{S}_{1}}{*}{\widetilde{B}^{+}}{d_{1}\circ\alpha^{+}}{}\right ). 
$$
To prove local formula (\ref{eq:localramred}), i.e.~to show the linear functors are equivalent, we will construct an isomorphism $\beta : \widetilde{T}_{1}\rightarrow \widetilde{B}^{+}$ that makes the following diagram commute: 
\begin{center}
\begin{tikzpicture}[scale=1.3]
  \node at (-1,0) (a) {$\widetilde{S}_{1}$};
  \node at (0,1) (b) {$\widetilde{B}^{+}$};
  \node at (0,-1) (c) {$\widetilde{T}_{1}$};
  \node at (1,0) (d) {$*$};
  \draw[->] (b) -- (a) node[left,pos=.3] {$d_{1}\circ\alpha^{+}$};
  \draw[->] (b) -- (d);
  \draw[->] (c) -- (a) node[left,pos=.4] {$\widetilde{N}_{1}$};
  \draw[->] (c) -- (d);
  \draw[->] (c) -- (b) node[right,pos=.5] {$\beta$};
\end{tikzpicture}
\end{center}
By construction, $\widetilde{B}^{+}$ is the following subset of $\widetilde{S}_{2}$: 
$$
\widetilde{B}^{+} = \{\sigma\in\widetilde{S}_{2}\mid d_{0}\sigma = p^{0}\}
$$
and $d_{1}\circ\alpha^{+}$ is just the restriction of $d_{1}$ to this subset. Then we define $\beta : \widetilde{T}_{1}\rightarrow \widetilde{B}^{+}$ by 
$$
\beta(\frak{p}^{k}) = \tikz[baseline=5]{
    \draw[thick] (-.7,0) -- (0,1) node[left,pos=.6] {$p^{k}$};
    \draw[thick] (0,1) -- (.7,0) node[right,pos=.4] {$p^{0}$};
    \draw[thick] (.7,0) -- (-.7,0) node[below,pos=.5] {$p^{k}$};
    \node at (0,.4) {\large $\sigma$};
}
$$
This is clearly invertible since every $2$-simplex in $\widetilde{B}^{+}$ is of this form. Moreover, $\beta$ makes the diagram commute since $N(\frak{p}^{k}) = p^{k}$ when $p$ is ramified. This shows formula (\ref{eq:localramred}) holds in the ramified case. 

Next, suppose $p$ splits in $\orb_{K}$. Let $\widetilde{S}_{1}^{-} = \varnothing$ and $j^{-} : \widetilde{S}_{1}^{-}\rightarrow\widetilde{S}_{1}$ be the same as above and let $\widetilde{S}_{1}^{+} = \widetilde{S}_{1}$ with $j^{+} : \widetilde{S}_{1}^{+}\rightarrow\widetilde{S}_{1}$ the identity map. As in the ramified case, the local formula reduces to local formula (\ref{eq:localramred}) and we can represent $\zeta_{\Q,p}*\widetilde{L}_{p}(\chi)^{+}$ by 
$$
\zeta_{\Q,p}*\widetilde{L}_{p}(\chi)^{+} = \left (\roofx{\widetilde{S}_{1}}{*}{\widetilde{B}^{+}}{d_{1}\circ\alpha^{+}}{}\right ). 
$$
Here, $\widetilde{B}^{+} = \widetilde{S}_{2}$, $d_{1}\circ\alpha^{+} = d_{1}$ and the map $\beta : \widetilde{T}_{1}\rightarrow \widetilde{B}^{+}$ is defined by 
$$
\beta(\frak{p}^{k}\overline{\frak{p}}^{\ell}) = \tikz[baseline=5]{
    \draw[thick] (-.7,0) -- (0,1) node[left,pos=.6] {$p^{k}$};
    \draw[thick] (0,1) -- (.7,0) node[right,pos=.4] {$p^{\ell}$};
    \draw[thick] (.7,0) -- (-.7,0) node[below,pos=.5] {$p^{k + \ell}$};
    \node at (0,.4) {\large $\sigma$};
}
$$
This is an isomorphism since every $\sigma\in\widetilde{S}_{2}$ is of the form shown. Moreover, $\beta$ makes the relevant diagram commute since 
$$
d_{1}\circ\beta(\frak{p}^{k}\overline{\frak{p}}^{\ell}) = d_{1}\left (\tikz[baseline=5]{
    \draw[thick] (-.7,0) -- (0,1) node[left,pos=.6] {$p^{k}$};
    \draw[thick] (0,1) -- (.7,0) node[right,pos=.4] {$p^{\ell}$};
    \draw[thick] (.7,0) -- (-.7,0) node[below,pos=.5] {$p^{k + \ell}$};
    \node at (0,.4) {\large $\sigma$};
}\right ) = p^{k + \ell} = N(\frak{p}^{k}\overline{\frak{p}}^{\ell}). 
$$
This proves the local formula in the split case. 

Finally, assume $p$ is inert and define $\widetilde{S}_{1}^{+}$ and $\widetilde{S}_{1}^{-}$ to be the subsets of even and odd powers of $p$ in $\widetilde{S}_{1} = \{p^{k}\}$, respectively: 
$$
\widetilde{S}_{1}^{+} = \{p^{2k}\} \qquad\text{and}\qquad \widetilde{S}_{1}^{-} = \{p^{2k + 1}\}. 
$$
Also let $j^{+} : \widetilde{S}_{1}^{+}\hookrightarrow\widetilde{S}_{1}$ and $j^{-} : \widetilde{S}_{1}^{-}\hookrightarrow\widetilde{S}_{1}$ be the natural inclusions. Then $\zeta_{\Q,p}*\widetilde{L}_{p}(\chi)^{+}$ and $\zeta_{\Q,p}*\widetilde{L}_{p}(\chi)^{-}$ are represented by spans that can be abbreviated as 
$$
\zeta_{\Q,p}*\widetilde{L}_{p}(\chi)^{+} = \left (\roofx{\widetilde{S}_{1}}{*}{\widetilde{B}^{+}}{d_{1}\circ\alpha^{+}}{}\right ) \quad\text{and}\quad \zeta_{\Q,p}*\widetilde{L}_{p}(\chi)^{-} = \left (\roofx{\widetilde{S}_{1}}{*}{\widetilde{B}^{-}}{d_{1}\circ\alpha^{-}}{}\right ). 
$$
This time, $\widetilde{B}^{+}$ and $\widetilde{B}^{-}$ are the following subsets of $\widetilde{S}_{2}$: 
$$
\widetilde{B}^{+} = \{\sigma\in\widetilde{S}_{2}\mid d_{0}\sigma = p^{2k},k\geq 0\} \quad\text{and}\quad \widetilde{B}^{-} = \{\sigma\in\widetilde{S}_{2}\mid d_{0}\sigma = p^{2k + 1},k\geq 0\}. 
$$
The maps $d_{1}\circ\alpha^{+}$ and $d_{1}\circ\alpha^{-}$ are each just the restriction of $d_{1}$ to the above subsets. To prove the local formula in this case, we must construct an isomorphism $\beta : \widetilde{T}_{1}\coprod \widetilde{B}^{-}\rightarrow \widetilde{B}^{+}$ making the following diagram commute: 
\begin{center}
\begin{tikzpicture}[scale=1.3]
  \node at (-1,0) (a) {$\widetilde{S}_{1}$};
  \node at (0,1) (b) {$\widetilde{B}^{+}$};
  \node at (0,-1) (c) {$\widetilde{T}_{1}\coprod \widetilde{B}^{-}$};
  \node at (1,0) (d) {$*$};
  \draw[->] (b) -- (a) node[left,pos=.3] {$d_{1}$};
  \draw[->] (b) -- (d);
  \draw[->] (c) -- (a) node[left,pos=.4] {$\widetilde{N}_{1}\sqcup d_{1}$};
  \draw[->] (c) -- (d);
  \draw[->] (c) -- (b) node[right,pos=.5] {$\beta$};
\end{tikzpicture}
\end{center}
For $p^{k}\in\widetilde{T}_{1}$, set 
$$
\beta(p^{k}) = \tikz[baseline=5]{
    \draw[thick] (-.7,0) -- (0,1) node[left,pos=.6] {$p^{0}$};
    \draw[thick] (0,1) -- (.7,0) node[right,pos=.4] {$p^{2k}$};
    \draw[thick] (.7,0) -- (-.7,0) node[below,pos=.5] {$p^{2k}$};
    \node at (0,.4) {\large $\sigma$};
}
$$
On the other hand, define $\beta$ on $\widetilde{B}^{-}$ by
$$
\beta\left (\tikz[baseline=5]{
    \draw[thick] (-.7,0) -- (0,1) node[left,pos=.6] {$p^{k}$};
    \draw[thick] (0,1) -- (.7,0) node[right,pos=.4] {$p^{2\ell + 1}$};
    \draw[thick] (.7,0) -- (-.7,0) node[below,pos=.5] {$p^{k + 2\ell + 1}$};
    \node at (0,.4) {\large $\sigma$};
}\right ) = \tikz[baseline=5]{
    \draw[thick] (-.7,0) -- (0,1) node[left,pos=.6] {$p^{k + 1}$};
    \draw[thick] (0,1) -- (.7,0) node[right,pos=.4] {$p^{2\ell}$};
    \draw[thick] (.7,0) -- (-.7,0) node[below,pos=.5] {$p^{k + 2\ell + 1}$};
    \node at (0,.4) {\large $\tau$};
}
$$
This defines $\beta$ on all of $\widetilde{T}_{1}\coprod \widetilde{B}^{-}$. To see that it's a bijection, take $\tau\in \widetilde{B}^{+}$. If $d_{2}\tau = p^{0}$, then $d_{0}\tau = p^{2k}$ for some $k\geq 0$; set $\beta^{-1}(\tau) = p^{k}\in\widetilde{T}_{1}$. Otherwise, $\tau$ is of the form shown below and we can define $\beta^{-1}(\tau)$ to be 
$$
\beta^{-1}\left (\tikz[baseline=5]{
    \draw[thick] (-.7,0) -- (0,1) node[left,pos=.6] {$p^{k + 1}$};
    \draw[thick] (0,1) -- (.7,0) node[right,pos=.4] {$p^{2\ell}$};
    \draw[thick] (.7,0) -- (-.7,0) node[below,pos=.5] {$p^{k + 2\ell + 1}$};
    \node at (0,.4) {\large $\tau$};
}\right ) = \tikz[baseline=5]{
    \draw[thick] (-.7,0) -- (0,1) node[left,pos=.6] {$p^{k}$};
    \draw[thick] (0,1) -- (.7,0) node[right,pos=.4] {$p^{2\ell + 1}$};
    \draw[thick] (.7,0) -- (-.7,0) node[below,pos=.5] {$p^{k + 2\ell + 1}$};
    \node at (0,.4) {\large $\sigma$};
}
$$
This is visibly an inverse to $\beta$, so we have our isomorphism. Finally, $\beta$ makes the relevant diagram commute because: 
\begin{align*}
    & d_{1}\circ\beta(p^{k}) = d_{1}\left (\tikz[baseline=5]{
    \draw[thick] (-.7,0) -- (0,1) node[left,pos=.6] {$p^{0}$};
    \draw[thick] (0,1) -- (.7,0) node[right,pos=.4] {$p^{2k}$};
    \draw[thick] (.7,0) -- (-.7,0) node[below,pos=.5] {$p^{2k}$};
    \node at (0,.4) {\large $\sigma$};
}\right ) = p^{2k} = \widetilde{N}(p^{k})\\
    \text{and}\quad & d_{1}\circ\beta\left (\tikz[baseline=5]{
    \draw[thick] (-.7,0) -- (0,1) node[left,pos=.6] {$p^{k}$};
    \draw[thick] (0,1) -- (.7,0) node[right,pos=.4] {$p^{2\ell + 1}$};
    \draw[thick] (.7,0) -- (-.7,0) node[below,pos=.5] {$p^{k + 2\ell + 1}$};
    \node at (0,.4) {\large $\sigma$};
}\right ) = d_{1}\left (\tikz[baseline=5]{
    \draw[thick] (-.7,0) -- (0,1) node[left,pos=.6] {$p^{k + 1}$};
    \draw[thick] (0,1) -- (.7,0) node[right,pos=.4] {$p^{2\ell}$};
    \draw[thick] (.7,0) -- (-.7,0) node[below,pos=.5] {$p^{k + 2\ell + 1}$};
    \node at (0,.4) {\large $\tau$};
}\right ) = p^{k + 2\ell + 1} = d_{1}\left (\tikz[baseline=5]{
    \draw[thick] (-.7,0) -- (0,1) node[left,pos=.6] {$p^{k}$};
    \draw[thick] (0,1) -- (.7,0) node[right,pos=.4] {$p^{2\ell + 1}$};
    \draw[thick] (.7,0) -- (-.7,0) node[below,pos=.5] {$p^{k + 2\ell + 1}$};
    \node at (0,.4) {\large $\sigma$};
}\right ). 
\end{align*}
This completes the proof of the local formula in all cases. 
\end{proof}

\begin{rem}
\label{rem:splitnotcanonical}
In the split case, the isomorphism $\widetilde{N}_{*}\zeta_{K,p} \cong \zeta_{\Q,p}*\widetilde{L}_{p}(\chi)^{+}$ is \emph{not canonical}: it depends on the choice of $\frak{p}$ and $\overline{\frak{p}}$ lying over $p$. Geometrically, this may be viewed as a choice of section of $\Spec\orb_{K}\rightarrow\Spec\Z$ over the split locus. 
\end{rem}

\begin{rem}
\label{rem:redprod}
We would like to assemble the local formulas of Theorem~\ref{thm:mainthmredloc} into the global formula of Theorem~\ref{thm:introred}. Unfortunately, assembling the formulas in the most straightforward way is not possible since 
$$
\prod_{p} (\widetilde{N}_{*}\zeta_{K,p} + \zeta_{\Q,p}*\widetilde{L}_{p}(\chi)^{-}) \,\not\cong\, \widetilde{N}_{*}\zeta_{K} + \zeta_{\Q}*\prod_{p} \widetilde{L}_{p}(\chi)^{-}. 
$$
However, the techniques in the proof above adapt well to the global setting if we are careful about how each term is represented objectively.
\end{rem}

Next, we prove the global formula in the reduced incidence algebra. Let $\widetilde{S}$ and $\widetilde{T}$ now denote the full monoids $\N^{\times}$ and $I_{K}^{\times}$, respectively. 

\begin{thm}[{Theorem~\ref{thm:introred}}]
\label{thm:mainthmred}
In the reduced incidence algebra $\widetilde{I}(S) = \widetilde{I}(\N,\mid) \cong I(\N^{\times})$, there is an equivalence of linear functors 
$$
\widetilde{N}_{*}\zeta_{K} + \zeta_{\Q}*\widetilde{L}(\chi)^{-} \cong \zeta_{\Q}*\widetilde{L}(\chi)^{+}. 
$$
\end{thm}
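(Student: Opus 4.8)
The plan is to globalize the argument of Theorem~\ref{thm:mainthmredloc}, being careful --- as Remark~\ref{rem:redprod} warns --- not simply to multiply the local formulas together. First I would fix the objective representatives: set $\widetilde{S}_{1}^{+} = \{n\in\N : \chi(n) = 1\}$ and $\widetilde{S}_{1}^{-} = \{n\in\N : \chi(n) = -1\}$, let $j^{\pm}$ be the inclusions into $\widetilde{S}_{1} = \N$, and let $\widetilde{L}(\chi)^{\pm}$ be the spans $\widetilde{S}_{1}\xleftarrow{j^{\pm}}\widetilde{S}_{1}^{\pm}\to *$. These restrict on prime powers to the local choices of Theorem~\ref{thm:mainthmredloc}, and since $\chi$ is $\{-1,0,1\}$-valued they satisfy $\widetilde{L}(\chi)^{+}(n) - \widetilde{L}(\chi)^{-}(n) = \chi(n)$ in the numerical incidence algebra, recovering (\ref{eq:quadzeta}). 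Unwinding the convolution spans exactly as in the local proof --- writing a $2$-simplex of $\N^{\times}$ as a pair $(a,b)$, so that $d_{2}(a,b) = a$, $d_{0}(a,b) = b$ and $d_{1}(a,b) = ab$ --- identifies $\zeta_{\Q}*\widetilde{L}(\chi)^{\pm}$ with the span $\widetilde{S}_{1}\xleftarrow{d_{1}}\widetilde{B}^{\pm}\to *$, where $\widetilde{B}^{\pm} = \{(a,b) : \chi(b) = \pm1\}$, and identifies $\widetilde{N}_{*}\zeta_{K}$ with $\widetilde{S}_{1}\xleftarrow{\widetilde{N}_{1}}\widetilde{T}_{1}\to *$, where $\widetilde{T}_{1} = I_{K}^{+}$. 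Thus the theorem reduces to producing a bijection $\beta : \widetilde{T}_{1}\coprod\widetilde{B}^{-}\xrightarrow{\ \sim\ }\widetilde{B}^{+}$ commuting with $\widetilde{N}_{1}\sqcup d_{1}$ on the source and $d_{1}$ on the target.

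I would build $\beta$ by splitting along the prime factorization. Every ideal factors uniquely as $\frak{a} = \frak{a}_{r}\frak{a}_{s}\frak{a}_{i}$ into the parts supported over ramified, split and inert rational primes, and accordingly $\widetilde{B}^{\pm}$ and $\widetilde{T}_{1}$ factor as products over the three prime types; the condition $\chi(b) = \pm1$ forces the ramified part of $b$ to be trivial, leaves the split part unconstrained, and on the inert part imposes only the condition that $b$ have an even (resp.\ odd) number of inert prime factors, counted with multiplicity. The ramified and split rules from Theorem~\ref{thm:mainthmredloc} --- $\frak{p}^{k}\mapsto(p^{k},1)$ and $\frak{p}^{k}\overline{\frak{p}}^{\ell}\mapsto(p^{k},p^{\ell})$ --- are bijections, and multiplying them over all ramified (resp.\ split) primes matches the ramified (resp.\ split) factor of $\widetilde{T}_{1}$ with that of $\widetilde{B}^{+}$ compatibly with the maps to $\widetilde{S}_{1}$. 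So the whole problem reduces to the inert factor: for an integer $n = \prod_{j=1}^{m} q_{j}^{e_{j}}$ supported on distinct inert primes, identifying a divisor of $n$ with a tuple $(f_{1},\dots,f_{m})$ with $0\le f_{j}\le e_{j}$, I must produce a bijection
$$
\{(f_{j}) : f_{1}+\dots+f_{m}\ \text{odd}\}\ \coprod\ A\ \xrightarrow{\ \sim\ }\ \{(f_{j}) : f_{1}+\dots+f_{m}\ \text{even}\},
$$
where $A$ is a one-element set when all $e_{j}$ are even --- its element corresponding to the unique ideal $\prod_{j}(q_{j}\orb_{K})^{e_{j}/2}$ of norm $n$ supported on inert primes --- and $A = \varnothing$ otherwise.

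I would prove this last bijection by induction on $m$, peeling off $q_{1}$. Grouping tuples by the value of $f_{1}$ expresses each side as a disjoint union of copies of $\{(f_{2},\dots,f_{m}) : \sum\ \text{even}\}$ and $\{(f_{2},\dots,f_{m}) : \sum\ \text{odd}\}$. When $e_{1}$ is odd the even and odd values of $f_{1}$ are equinumerous and $A = \varnothing$, so the two sides are visibly isomorphic (this corresponds to $f_{1}\mapsto f_{1}\pm1$). When $e_{1}$ is even there is exactly one more even value of $f_{1}$ than odd, and one absorbs the resulting extra copy of $\{(f_{2},\dots,f_{m}) : \sum\ \text{odd}\}$ together with $A$, converting it by the inductive bijection for $(e_{2},\dots,e_{m})$ into the needed extra copy of $\{(f_{2},\dots,f_{m}) : \sum\ \text{even}\}$. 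Reassembling the ramified, split and inert bijections fiberwise over $\widetilde{S}_{1}$ gives $\beta$; it commutes with the maps to $\widetilde{S}_{1}$ because each elementary move preserves the product $ab$ of the two legs of a $2$-simplex, which on the image of $I_{K}^{+}$ equals $N(\frak{a})$. As in Remark~\ref{rem:splitnotcanonical}, $\beta$ depends on the choices of $\frak{p},\overline{\frak{p}}$ over split primes, so the equivalence is non-canonical.

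The main obstacle is precisely the point raised in Remark~\ref{rem:redprod}: $\widetilde{B}^{-}$ is not the product of the local functors $\widetilde{B}^{-}_{p}$ --- the parity condition couples all inert primes simultaneously --- so the local bijections of Theorem~\ref{thm:mainthmredloc} cannot be combined by a naive tensor product. The inductive ``carry'' over the inert exponents is what bridges this, and the remainder is routine manipulation of spans together with the multiplicativity of $N$ and of $\chi$.
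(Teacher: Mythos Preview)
Your proof is correct, and it diverges from the paper's in a genuine way: the two proofs use \emph{different} linear functors $\widetilde{L}(\chi)^{\pm}$. You take $\widetilde{S}_{1}^{\pm} = \chi^{-1}(\pm 1)$, the literal positive and negative parts of $\chi$, so the identity $\widetilde{L}(\chi)^{+}(n) - \widetilde{L}(\chi)^{-}(n) = \chi(n)$ of the introduction and Remark~\ref{rem:redcard} are immediate. The paper instead imposes \emph{per-prime} parity constraints: its $\widetilde{S}_{1}^{+}$ consists of those $n$ with no ramified factor and \emph{each} inert exponent even, while its $\widetilde{S}_{1}^{-}$ consists of $n$ supported only on inert primes with \emph{each} inert exponent odd. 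Because those conditions factor over the primes, the paper can write $\beta$ and $\beta^{-1}$ by closed formulas that shift one unit of exponent between $d_{0}$ and $d_{2}$ at every inert prime simultaneously, with no induction. Your condition $\chi(b) = \pm 1$, by contrast, couples all inert primes through the parity of the \emph{sum} $\sum_{q\ \text{inert}} v_{q}(b)$; this coupling is precisely the obstruction of Remark~\ref{rem:redprod}, and your inductive carry over the inert exponents is what dissolves it. Both choices restrict on prime powers to the local $\widetilde{S}_{1,p}^{\pm}$ of Theorem~\ref{thm:mainthmredloc}, so the two global theorems agree locally but package the passage to the global statement differently: the paper trades the canonical decomposition of $\chi$ for an explicit prime-by-prime bijection, while you keep the canonical $\widetilde{L}(\chi)^{\pm}$ at the cost of a short combinatorial induction on the inert support.
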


\begin{proof}
As in the local proof, the terms in the global formula will be represented by spans 
\begin{align*}
    \widetilde{N}_{*}\zeta_{K} &= \left (\tikz[baseline=10]{
      \node at (-1,0) (a) {$\widetilde{S}_{1}$};
      \node at (1,0) (b) {$*$};
      \node at (0,1) (top) {$\widetilde{T}_{1}$};
      \draw[->] (top) -- (a) node[above,pos=.7] {$\widetilde{N}_{1}\;\;\;$};
      \draw[->] (top) -- (b);
    }\right )\\
    \zeta_{\Q}*\widetilde{L}(\chi)^{-} &= \left ( \tikz[xscale=3,yscale=1.7,baseline=30]{
        \node at (0,0) (a) {$\widetilde{S}_{1}$};
        \node at (1,0) (b) {$\widetilde{S}_{1}\times\widetilde{S}_{1}$};
        \node at (2,0) (c) {$*$};
        \node at (.5,.8) (x) {$\widetilde{S}_{2}$};
        \node at (1.5,.8) (y) {$\widetilde{S}_{1}\times\widetilde{S}_{1}^{-}$};
        \node at (1,1.6) (z) {$\widetilde{B}^{-}$};
        \draw[->] (z) -- (x) node[left,pos=.3] {$\alpha^{-}$};
        \draw[->] (z) -- (y);
        \draw[->] (x) -- (a) node[left,pos=.3] {$d_{1}$};
        \draw[->] (x) -- (b) node[right,pos=.1] {$(d_{2},d_{0})$};
        \draw[->] (y) -- (b) node[right,pos=.8] {$id\times j^{-}$};
        \draw[->] (y) -- (c);
    }\right )\\
    \text{and}\quad \zeta_{\Q}*\widetilde{L}(\chi)^{+} &= \left ( \tikz[xscale=3,yscale=1.7,baseline=30]{
        \node at (0,0) (a) {$\widetilde{S}_{1}$};
        \node at (1,0) (b) {$\widetilde{S}_{1}\times\widetilde{S}_{1}$};
        \node at (2,0) (c) {$*$};
        \node at (.5,.8) (x) {$\widetilde{S}_{2}$};
        \node at (1.5,.8) (y) {$\widetilde{S}_{1}\times\widetilde{S}_{1}^{+}$};
        \node at (1,1.6) (z) {$\widetilde{B}^{+}$};
        \draw[->] (z) -- (x) node[left,pos=.3] {$\alpha^{+}$};
        \draw[->] (z) -- (y);
        \draw[->] (x) -- (a) node[left,pos=.3] {$d_{1}$};
        \draw[->] (x) -- (b) node[right,pos=.1] {$(d_{2},d_{0})$};
        \draw[->] (y) -- (b) node[right,pos=.8] {$id\times j^{+}$};
        \draw[->] (y) -- (c);
    }\right )
\end{align*}
We will define $j^{+} : \widetilde{S}_{1}^{+}\rightarrow\widetilde{S}_{1}$ and $j^{-} : \widetilde{S}_{1}^{-}\rightarrow\widetilde{S}_{1}$. Then the bulk of the proof involves constructing an isomorphism $\beta : \widetilde{T}_{1}\coprod\widetilde{B}^{-}\rightarrow\widetilde{B}^{+}$ making the following diagram commute: 
\begin{center}
\begin{tikzpicture}[scale=1.3]
  \node at (-1,0) (a) {$\widetilde{S}_{1}$};
  \node at (0,1) (b) {$\widetilde{B}^{+}$};
  \node at (0,-1) (c) {$\widetilde{T}_{1}$};
  \node at (1,0) (d) {$*$};
  \draw[->] (b) -- (a) node[left,pos=.3] {$d_{1}\circ\alpha^{+}$};
  \draw[->] (b) -- (d);
  \draw[->] (c) -- (a) node[left,pos=.4] {$\widetilde{N}_{1}$};
  \draw[->] (c) -- (d);
  \draw[->] (c) -- (b) node[right,pos=.5] {$\beta$};
\end{tikzpicture}
\end{center}

Define $\widetilde{S}_{1}^{+}$ and $\widetilde{S}_{1}^{-}$ by: 
\begin{align*}
    \widetilde{S}_{1}^{+} &= \{n\in\N \mid v_{p}(n) = 0 \text{ for ramified } p,\; v_{p}(n) = 2k \text{ for inert } p\}\\
    \widetilde{S}_{1}^{-} &= \{n\in\N \mid v_{p}(n) = 0 \text{ for ramified and split } p,\; v_{p}(n) = 2k + 1 \text{ for inert } p\}. 
\end{align*}
Let $j^{+} : \widetilde{S}_{1}^{+}\hookrightarrow\widetilde{S}_{1}$ and $j^{-} : \widetilde{S}_{1}^{-}\hookrightarrow\widetilde{S}_{1}$ be the natural inclusions. Then the sets $\widetilde{B}^{+}$ and $\widetilde{B}^{-}$ appearing in the spans above are given by: 
$$
\widetilde{B}^{+} = \{\sigma\in\widetilde{S}_{2} \mid d_{0}\sigma\in\widetilde{S}_{1}^{+}\} \quad\text{and}\quad \widetilde{B}^{-} = \{\sigma\in\widetilde{S}_{2} \mid d_{0}\sigma\in\widetilde{S}_{1}^{-}\}. 
$$

We proceed to construct $\beta : \widetilde{T}_{1}\coprod\widetilde{B}^{-}\rightarrow\widetilde{B}^{+}$ and prove its required properties. First, for $\frak{a} = \prod \frak{p}^{v_{\frak{p}}(\frak{a})}$ in $\widetilde{T}_{1}$, define 
$$
\beta(\frak{a}) = \tikz[baseline=5]{
    \draw[thick] (-.7,0) -- (0,1) node[left,pos=.6] {$d_{2}\tau$};
    \draw[thick] (0,1) -- (.7,0) node[right,pos=.4] {$d_{0}\tau$};
    \draw[thick] (.7,0) -- (-.7,0) node[below,pos=.5] {$d_{1}\tau$};
    \node at (0,.4) {\large $\tau$};
}
$$
whose faces are given by: 
\begin{align*}
    d_{2}\tau &= \prod_{\frak{p}\mid p \text{ ramified}} N(\frak{p})^{v_{\frak{p}}(\frak{a})} \times \prod_{\frak{p}\mid p \text{ split}} N(\frak{p})^{v_{\frak{p}}(\frak{a})}\\
    d_{0}\tau &= \prod_{\overline{\frak{p}}\mid p \text{ split}} N(\overline{\frak{p}})^{v_{\overline{\frak{p}}}(\frak{a})} \times \prod_{\frak{p}\mid p \text{ inert}} N(\frak{p})^{v_{\frak{p}}(\frak{a})}\\
    \text{and}\quad d_{1}\tau &= N(\frak{a}) = \prod_{\frak{p}} N(\frak{p})^{v_{\frak{p}}(\frak{a})}.
\end{align*}
Meanwhile, for $\sigma\in\widetilde{B}^{-}$, define 
$$
\beta\left (\tikz[baseline=5]{
    \draw[thick] (-.7,0) -- (0,1) node[left,pos=.6] {$a$};
    \draw[thick] (0,1) -- (.7,0) node[right,pos=.4] {$b$};
    \draw[thick] (.7,0) -- (-.7,0) node[below,pos=.5] {$n$};
    \node at (0,.4) {\large $\sigma$};
}\right ) = \tikz[baseline=5]{
    \draw[thick] (-.7,0) -- (0,1) node[left,pos=.6] {$d_{2}\tau$};
    \draw[thick] (0,1) -- (.7,0) node[right,pos=.4] {$d_{0}\tau$};
    \draw[thick] (.7,0) -- (-.7,0) node[below,pos=.5] {$d_{1}\tau$};
    \node at (0,.4) {\large $\tau$};
}
$$
with faces 
\begin{align*}
    d_{2}\tau &= \prod_{p \text{ ramified}} p^{v_{p}(a)} \times \prod_{p \text{ split}} p^{v_{p}(a)} \times \prod_{p \text{ inert}} p^{v_{p}(a) + 1}\\
    d_{0}\tau &= \prod_{p \text{ inert}} p^{v_{p}(b) - 1} \quad\text{and}\quad d_{1}\tau = n = ab. 
\end{align*}
To show $\beta$ is an isomorphism, we construct its inverse $\beta^{-1} : \widetilde{B}^{+}\rightarrow\widetilde{T}_{1}\coprod\widetilde{B}^{-}$. If $\tau\in\widetilde{B}^{+}$ has $v_{p}(d_{2}\tau) = 0$ for all inert $p$, send it to $\beta^{-1}(\tau)\in\widetilde{T}_{1}$ defined by 
$$
\beta^{-1}(\tau) = \prod_{p \text{ ramified}} \frak{p}^{v_{p}(d_{1}\tau)} \times \prod_{p \text{ split}} \frak{p}^{v_{p}(d_{2}\tau)}\overline{\frak{p}}^{v_{p}(d_{0}\tau)} \times \prod_{p \text{ inert}} \frak{p}^{v_{p}(d_{1}\tau)/2}. 
$$
Otherwise, $v_{p}(d_{2}\tau) > 0$ for some inert $p$, so we send it to
$$
\beta^{-1}(\tau) = \tikz[baseline=5]{
    \draw[thick] (-.7,0) -- (0,1) node[left,pos=.6] {$d_{2}\sigma$};
    \draw[thick] (0,1) -- (.7,0) node[right,pos=.4] {$d_{0}\sigma$};
    \draw[thick] (.7,0) -- (-.7,0) node[below,pos=.5] {$d_{1}\sigma$};
    \node at (0,.4) {\large $\sigma$};
}\quad\text{in }\widetilde{B}^{-}
$$
with faces 
\begin{align*}
    d_{2}\sigma &= \prod_{p\text{ ramified}} p^{v_{p}(d_{2}\tau)} \times \prod_{p \text{ split}} p^{v_{p}(d_{2}\tau)} \times \prod_{\substack{p \text{ inert} \\ v_{p}(d_{2}\tau) > 0}} p^{v_{p}(d_{2}\tau) - 1}\\
    d_{0}\sigma &= \prod_{p\text{ ramified}} p^{v_{p}(d_{0}\tau)} \times \prod_{p \text{ split}} p^{v_{p}(d_{0}\tau)} \times \prod_{p \text{ inert}} p^{v_{p}(d_{0}\tau) + 1}\\
    \text{and}\quad d_{1}\sigma &= d_{1}\tau = \prod_{p} p^{v_{p}(d_{1}\tau)}. 
\end{align*}
Then $\beta$ is a bijection and satisfies $d_{1}\circ\beta = N\sqcup d_{1}$ by construction. This completes the proof. 
\end{proof}

\begin{rem}
\label{rem:redcard}
Taking the cardinality of the formula in Theorem~\ref{thm:mainthmred} recovers formula (\ref{eq:quadzeta}), so Theorem~\ref{thm:mainthmred} can be considered a categorification of that formula for the Dedekind zeta function of a quadratic number field. Explicitly, cardinality is a map from the abstract incidence algebra $I(\N^{\times})$ to the numerical incidence algebra $I_{\#}(\N^{\times})$: 
\begin{align*}
    |\cdot| : I(\N^{\times}) &\longrightarrow I_{\#}(\N^{\times})\\
    \varphi = \left (\roofx{S_{1}}{*}{M}{f}{}\right ) &\longmapsto \left (|\varphi| : n \mapsto |f^{-1}(n)| \right ). 
\end{align*}
Applying $|\cdot|$ to the formula $\widetilde{N}_{*}\zeta_{K} + \zeta_{\Q}*\widetilde{L}(\chi)^{-} \cong \zeta_{\Q}*\widetilde{L}(\chi)^{+}$ recovers formula (\ref{eq:quadzeta}), using the fact that $I_{\#}(\N^{\times})$ is isomorphic to the ring of Dirichlet series $DS(\Q)$ by Example~\ref{ex:divposet}. 
\end{rem}

\subsection{The Full Version}
\label{sec:mainthmfull}

As in the reduced case, we start by proving a local version of Theorem~\ref{thm:introthm} for each prime factor. For a prime $p$, let $S = (\{p^{k}\},\mid)$ be the subposet of $(\N,\mid)$ consisting of prime powers of $p$; let $T$ be the analogous subposet of $(I_{K}^{+},\mid)$ consisting of ideals above powers of $p$; let $N : T\rightarrow S$ be the restriction of the norm map to these subposets; and let $\zeta_{\Q,p}$ and $\zeta_{K,p}$ be the zeta functions in $I(S)$ and $I(T)$, respectively. 

\begin{thm}[{Local Version of Theorem~\ref{thm:introthm}}]
\label{thm:mainthmloc}
For each prime $p$, in the incidence algebra $I(\{p^{k}\},\mid)$, there is an equivalence of linear functors 
$$
N_{*}\zeta_{K,p} + \zeta_{\Q,p}*L_{p}(\chi)^{-} \cong \zeta_{\Q,p}*L_{p}(\chi)^{+}
$$
\end{thm}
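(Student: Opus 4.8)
The plan is to mirror the structure of the proof of Theorem~\ref{thm:mainthmredloc}, working now in the full incidence algebra $I(\{p^k\},\mid)$ rather than its reduced quotient. The key conceptual point is that intervals $[1,p^a]$ in the division poset $(\{p^k\},\mid)$ are no longer collapsed onto a single isomorphism class, so $1$-simplices of $S$ are pairs $(p^a,p^b)$ with $a\le b$ rather than single prime powers; equivalently $S_1 = \{(a,b): 0\le a\le b\}$ with $d_0(a,b)=(a,b)$ off the diagonal replaced by the appropriate interval data, and $2$-simplices are triples $a\le b\le c$. I would first carefully set up the spans defining $N_*\zeta_{K,p}$, $\zeta_{\Q,p}*L_p(\chi)^-$, and $\zeta_{\Q,p}*L_p(\chi)^+$ exactly as in the reduced case, but now with $S_1,S_2,T_1$ being the interval sets of the posets $S$ and $T$, and with $L_p(\chi)^{\pm}$ defined by objective vectors $j^{\pm}: S_1^{\pm}\to S_1$ supported on appropriate intervals (for the ramified and split cases, $S_1^+$ is everything and $S_1^-$ is empty; for the inert case, $S_1^{\pm}$ picks out intervals $[p^a,p^b]$ with $b-a$ even resp.\ odd).

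Next, in each of the three splitting cases I would construct the bijection $\beta$ realizing the equivalence of linear functors, following the templates already established. In the ramified and split cases the formula again reduces to $N_*\zeta_{K,p}\cong \zeta_{\Q,p}*L_p(\chi)^+$, and I would send an interval in $T$ — recording an inclusion of ideals $\frak{a}\mid\frak{b}$ above powers of $p$ — to the corresponding $2$-simplex of $S$, using $N(\frak p^k)=p^k$ (ramified) or the splitting $\frak p^k\overline{\frak p}^\ell$ (split) just as before, but now tracking the outer face $d_1$ of the interval rather than only the ``length''. In the inert case the bijection $\beta:\widetilde T_1\coprod B^-\to B^+$ is built in two pieces exactly as in Theorem~\ref{thm:mainthmredloc}: ideals $\frak p^k\mapsto$ the $2$-simplex whose $d_2$ face is trivial and whose $d_0,d_1$ faces double the exponent shift, while a simplex in $B^-$ with odd $d_0$-gap $2\ell+1$ is pushed to one in $B^+$ by shifting one unit of the exponent from the $d_0$ side to the $d_2$ side; one then checks this is visibly invertible and commutes with the maps to $S_1$ and $*$.

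The main obstacle — and the only genuine difference from the reduced proof — is bookkeeping the extra grading data carried by intervals in the full poset: in $I(\{p^k\},\mid)$ an interval $[p^a,p^b]$ remembers both endpoints, not just $b-a$, so every $2$-simplex must be specified by a triple $(p^a,p^b,p^c)$ and the faces $d_2,d_1,d_0$ return $[p^a,p^b],[p^a,p^c],[p^b,p^c]$ respectively. One must verify that all the constructions $\beta$ respect this finer data — i.e.\ that the ``base point'' $p^a$ is transported correctly and not just the differences — and that the square defining each convolution span is genuinely a pullback with the claimed fiber. This is entirely routine but requires care, since the collapse to isomorphism classes is exactly what made the reduced statement cleaner; the content of the theorem is that the same bijections lift before that collapse. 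I would conclude by noting (as the reduced proof does) that taking cardinalities recovers the local factorization of $\zeta_{K,p}$ from Proposition~\ref{prop:quadsplitting}, and flag in a remark that, as in the reduced case, the inert and split bijections depend on auxiliary choices and do not patch directly into the global Theorem~\ref{thm:introthm} without the more careful objective representatives used there.
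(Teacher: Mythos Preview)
Your description of $S_1^{\pm}$ is incorrect in both the ramified and split cases, and in the split case this is a genuine gap, not just bookkeeping. For $p$ ramified you claim $S_1^+$ is everything, but then $\zeta_{\Q,p}*L_p(\chi)^+ = \zeta_{\Q,p}*\zeta_{\Q,p}$ has value $n-m+1$ on the interval $[p^m,p^n]$, while $N_*\zeta_{K,p}$ has value $1$ there (since $N(\frak p^a)=p^a$ forces $a=m$, $b=n$). The correct $S_1^+$ in the ramified case is the set of \emph{degenerate} intervals $[p^k,p^k]$, mirroring $\widetilde S_1^+=\{p^0\}$ in the reduced proof.

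The split case is where your claim that the difference is ``entirely routine bookkeeping'' breaks down. If you take $S_1^+=S_1$ with $j^+$ the identity, then over $[p^m,p^n]$ the right side $\zeta_{\Q,p}*\zeta_{\Q,p}$ again has value $n-m+1$, but the fiber of $N_1$ over $[p^m,p^n]$ has cardinality $(m+1)(n-m+1)$: there are $m+1$ ideals $\frak p^k\overline{\frak p}^{m-k}$ of norm $p^m$, and for each one $n-m+1$ ideals of norm $p^n$ containing it. The extra factor $(m+1)$ is invisible in the reduced algebra (it depends on the basepoint of the interval, not its isomorphism class) and cannot be produced by any $j^+:S_1^+\hookrightarrow S_1$. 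The paper's fix is to take $S_1^+$ as a subset of $S_1\times S_1$ with $j^+$ the \emph{multiplication} map $([p^k,p^{k'}],[p^\ell,p^{\ell'}])\mapsto[p^{k+\ell},p^{k'+\ell'}]$; the pullback $B^+$ then records not just a $2$-simplex $\sigma$ but also a splitting of $d_0\sigma$ into two factors, which supplies exactly the missing multiplicity. Your inert case is essentially correct.
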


\begin{proof}
We first define the terms appearing in the local formula. The first term, $N_{*}\zeta_{K,p}$, is represented by the span 
$$
N_{*}\zeta_{K,p} = \left (\tikz[baseline=10]{
  \node at (-1,0) (a) {$S_{1}$};
  \node at (1,0) (b) {$*$};
  \node at (0,1) (top) {$T_{1}$};
  \draw[->] (top) -- (a) node[above,pos=.7] {$N_{1}\;\;$};
  \draw[->] (top) -- (b);
}\right )
$$
(see Example~\ref{ex:pushfwd}). The linear functors $L_{p}(\chi)^{+}$ and $L_{p}(\chi)^{-}$ will be defined below. As with Theorem~\ref{thm:mainthmredloc}, we now divide the proof of the local formula into three cases. 

First, suppose $p$ ramifies in $\orb_{K}$. Let $S_{1}^{+}$ be the set of $1$-simplices in $S_{1}$ of the form $[p^{k},p^{k}]$ and let $j^{+} : S_{1}^{+}\rightarrow S_{1}$ be the natural inclusion. Also, let $S_{1}^{-} = \varnothing$ with the unique map $j^{-} : S_{1}^{-}\rightarrow S_{1}$. Then $\chi_{p}^{+}$ may be defined as the linear functor 
$$
L_{p}(\chi)^{+} = \left (\roofx{S_{1}}{*}{S_{1}^{+}}{j^{+}}{}\right )
$$
while $L_{p}(\chi)^{-}$ is the ``empty functor''. Then the convolutions we need are represented by spans: 
\begin{align*}
    \zeta_{\Q,p}*L_{p}(\chi)^{-} &= \left ( \tikz[xscale=3,yscale=1.7,baseline=30]{
        \node at (0,0) (a) {$S_{1}$};
        \node at (1,0) (b) {$S_{1}\times S_{1}$};
        \node at (2,0) (c) {$*$};
        \node at (.5,.8) (x) {$S_{2}$};
        \node at (1.5,.8) (y) {$S_{1}\times S_{1}^{-}$};
        \node at (1,1.6) (z) {$B^{-}$};
        \draw[->] (z) -- (x) node[left,pos=.3] {$\alpha^{-}$};
        \draw[->] (z) -- (y);
        \draw[->] (x) -- (a) node[left,pos=.3] {$d_{1}$};
        \draw[->] (x) -- (b) node[right,pos=.1] {$(d_{2},d_{0})$};
        \draw[->] (y) -- (b) node[right,pos=.8] {$id\times j^{-}$};
        \draw[->] (y) -- (c);
    }\right )\\
    \text{and}\quad \zeta_{\Q,p}*L_{p}(\chi)^{+} &= \left ( \tikz[xscale=3,yscale=1.7,baseline=30]{
        \node at (0,0) (a) {$S_{1}$};
        \node at (1,0) (b) {$S_{1}\times S_{1}$};
        \node at (2,0) (c) {$*$};
        \node at (.5,.8) (x) {$S_{2}$};
        \node at (1.5,.8) (y) {$S_{1}\times S_{1}^{+}$};
        \node at (1,1.6) (z) {$B^{+}$};
        \draw[->] (z) -- (x) node[left,pos=.3] {$\alpha^{+}$};
        \draw[->] (z) -- (y);
        \draw[->] (x) -- (a) node[left,pos=.3] {$d_{1}$};
        \draw[->] (x) -- (b) node[right,pos=.1] {$(d_{2},d_{0})$};
        \draw[->] (y) -- (b) node[right,pos=.8] {$id\times j^{+}$};
        \draw[->] (y) -- (c);
    }\right )
\end{align*}
Since $L_{p}(\chi)^{-}$ is the empty functor, the second diagram also represents the empty functor, so the local formula in the ramified case reduces to
\begin{equation}\label{eq:localram}
N_{*}\zeta_{K,p} \cong \zeta_{\Q,p}*L_{p}(\chi)^{+}.
\end{equation}
For convenience, we abbreviate the third diagram as a single span:
$$
\zeta_{\Q,p}*L_{p}(\chi)^{+} = \left (\roofx{S_{1}}{*}{B^{+}}{d_{1}\circ\alpha^{+}}{}\right ). 
$$
To prove the local formula (\ref{eq:localram}), we will construct an isomorphism $\beta : T_{1}\rightarrow B^{+}$ that makes the following diagram commute: 
\begin{center}
\begin{tikzpicture}[scale=1.3]
  \node at (-1,0) (a) {$S_{1}$};
  \node at (0,1) (b) {$B^{+}$};
  \node at (0,-1) (c) {$T_{1}$};
  \node at (1,0) (d) {$*$};
  \draw[->] (b) -- (a) node[left,pos=.3] {$d_{1}\circ\alpha^{+}$};
  \draw[->] (b) -- (d);
  \draw[->] (c) -- (a) node[left,pos=.5] {$N_{1}$};
  \draw[->] (c) -- (d);
  \draw[->] (c) -- (b) node[right,pos=.5] {$\beta$};
\end{tikzpicture}
\end{center}
Notice that $B^{+}$ is the pullback of $S_{1}\times S_{1}^{+}$ along $(d_{2},d_{0}) : S_{2}\rightarrow S_{1}\times S_{1}$, so explicitly, 
$$
B^{+} = \biggl\{\sigma\in S_{2} \;\biggr\rvert\; d_{0}\sigma = [p^{k},p^{k}] \text{ for some } k\geq 0\biggr\}. 
$$
In this case, $d_{1}\circ\alpha^{+}$ is just the restriction of $d_{1}$ to this subset $B^{+}$ of $S_{2}$. The map $\beta : T_{1}\rightarrow B^{+}$ is defined by 
$$
\beta([\frak{p}^{k},\frak{p}^{\ell}]) = \tikz[baseline=5]{
    \draw[thick] (-.7,0) -- (0,1) node[left,pos=.6] {$[p^{k},p^{\ell}]$};
    \draw[thick] (0,1) -- (.7,0) node[right,pos=.4] {$[p^{\ell},p^{\ell}]$};
    \draw[thick] (.7,0) -- (-.7,0) node[below,pos=.5] {$[p^{k},p^{\ell}]$};
    \node at (0,.4) {\large $\sigma$};
}
$$
This is clearly invertible since every $2$-simplex in $B^{+}$ is of the form shown. Moreover, it's easy to check that $\beta$ makes the diagram commute. Therefore the local formula (\ref{eq:localram}) holds in the ramified case. 

Next, if $p$ splits in $\orb_{K}$, let $S_{1}^{-} = \varnothing$ again, with $j^{-} : S_{1}^{-}\rightarrow S$ the unique map. On the other hand, let $S_{1}^{+}\subseteq S_{1}\times S_{1}$ be defined by 
$$
S_{1}^{+} = \{(x,y)\in S_{1}\times S_{1}\mid d_{1}(x,y)\not = (*,*),d_{0}x\not = *\}
$$
where $*$ denotes the initial element $p^{0}\in S_{0}$. Also, let $j^{+} : S_{1}^{+}\rightarrow S_{1}$ be the restriction of the map $j : S_{1}\times S_{1}\rightarrow S_{1},([p^{k},p^{k'}],[p^{\ell},p^{\ell'}]) \mapsto [p^{k + \ell},p^{k' + \ell'}]$ to $S_{1}^{+}$. This can be visualized as follows: 
\begin{center}
\begin{tikzpicture}
    \draw[gray] (0,0) -- (1,.5) (0,0) -- (1,-.5) -- (2,-1) -- (3,-1.5) -- (3.5,-1.75) (1,-.5) -- (2,0) (2,-1) -- (3,-.5) (3,-1.5) -- (3.5,-1.25);
    \fill (0,0) circle (.07);
    \fill (1,.5) circle (.07);
    \fill (1,-.5) circle (.07);
    \fill (2,1) circle (.07);
    \fill (2,0) circle (.07);
    \fill (2,-1) circle (.07);
    \fill (3,1.5) circle (.07);
    \fill (3,.5) circle (.07);
    \fill (3,-.5) circle (.07);
    \fill (3,-1.5) circle (.07);
    \node at (4,0) {$\cdots$};
    \draw[blue] (1,.5) -- (2,1) -- (3,1.5) (2,1) -- (3,.5) (1,.5) -- (2,0) -- (3,.5) (2,0) -- (3,-.5) (3,1.5) -- (3.5,1.75) (3,1.5) -- (3.5,1.25) (3,.5) -- (3.5,.75) (3,.5) -- (3.5,.25) (3,-.5) -- (3.5,-.25) (3,-.5) -- (3.5,-.75);
    \fill (0,-3) circle (.07);
    \fill (1,-3) circle (.07);
    \fill (2,-3) circle (.07);
    \fill (3,-3) circle (.07);
    \node at (4,-3) {$\cdots$};
    \draw (0,-3) -- (1,-3) -- (2,-3) -- (3,-3) -- (3.5,-3);
    \node[gray] at (-1.2,0) {$S_{1}\times S_{1}$};
    \node at (-1,-3) {$S_{1}$};
    \node[blue] at (1.6,1.5) {$S_{1}^{+}$};
    \draw[->] (.5,-.5) -- (.5,-2.5) node[left,pos=.5] {$j$};
    \draw[->] (1.5,-1) -- (1.5,-2.5) node[left,pos=.5] {$j$};
    \draw[->] (2.5,-1.5) -- (2.5,-2.5) node[left,pos=.5] {$j$};
\end{tikzpicture}
\end{center}
As in the ramified case, let $L_{p}(\chi)^{+}$ be the linear functor represented by the following span: 
$$
L_{p}(\chi)^{+} = \left (\roofx{S_{1}}{*}{S_{1}^{+}}{j^{+}}{}\right )
$$
The local formula we must prove is the same as formula (\ref{eq:localram}). Keeping the same notation as in the ramified case, we will construct an isomorphism $\beta : T_{1}\rightarrow B^{+}$ that makes the same diagram commute:
\begin{center}
\begin{tikzpicture}[scale=1.3]
  \node at (-1,0) (a) {$S_{1}$};
  \node at (0,1) (b) {$B^{+}$};
  \node at (0,-1) (c) {$T_{1}$};
  \node at (1,0) (d) {$*$};
  \draw[->] (b) -- (a) node[left,pos=.3] {$d_{1}\circ\alpha^{+}$};
  \draw[->] (b) -- (d);
  \draw[->] (c) -- (a) node[left,pos=.5] {$N$};
  \draw[->] (c) -- (d);
  \draw[->] (c) -- (b) node[right,pos=.5] {$\beta$};
\end{tikzpicture}
\end{center}
In this case, 
$$
B^{+} = \{(\sigma,x,(y,z))\in S_{2}\times S_{1}\times S_{1}^{+} \mid d_{2}\sigma = x,d_{0}\sigma = j^{+}(y,z)\}
$$
and $d_{1}\circ\alpha^{+}(\sigma,x,(y,z)) = d_{1}\sigma$. The map $\beta : T_{1}\rightarrow B^{+}$ is defined by 
$$
\beta\left ([\frak{p}^{k}\overline{\frak{p}}^{\ell},\frak{p}^{k'}\overline{\frak{p}}^{\ell'}]\right ) = \left (\tikz[baseline=5]{
        \draw[thick] (-.7,0) -- (0,1) node[left,pos=.6] {$[p^{k + \ell},p^{k' + \ell}]$};
        \draw[thick] (0,1) -- (.7,0) node[right,pos=.4] {$[p^{k' + \ell},p^{k' + \ell'}]$};
        \draw[thick] (.7,0) -- (-.7,0) node[below,pos=.5] {$[p^{k + \ell},p^{k' + \ell'}]$};
        \node at (0,.4) {\large $\sigma$};
    },[p^{k + \ell},p^{k' + \ell'}],\biggl ([p^{k'},p^{k'}],[p^{\ell},p^{\ell'}]\biggr )\right ). 
$$
This map is invertible, with inverse $\beta^{-1}$ given by 
$$
\beta^{-1}\left (\tikz[baseline=5]{
        \draw[thick] (-.7,0) -- (0,1) node[left,pos=.6] {$[p^{a},p^{b}]$};
        \draw[thick] (0,1) -- (.7,0) node[right,pos=.4] {$[p^{b},p^{c}]$};
        \draw[thick] (.7,0) -- (-.7,0) node[below,pos=.5] {$[p^{a},p^{c}]$};
        \node at (0,.4) {\large $\sigma$};
    },[p^{a},p^{c}],\biggl ([p^{b'},p^{c'}],[p^{b - b'},p^{c - c'}]\biggr )\right ) = [\frak{p}^{b' - b + a}\overline{\frak{p}}^{b - b'},\frak{p}^{c'}\overline{\frak{p}}^{c - c'}]. 
$$
Moreover, $\beta$ makes the required diagram commute since 
\begin{align*}
    d_{1}\circ\alpha^{+}\left (\beta\left ([\frak{p}^{k}\overline{\frak{p}}^{\ell},\frak{p}^{k'}\overline{\frak{p}}^{\ell'}]\right )\right ) &= d_{1}\left (\tikz[baseline=5]{
        \draw[thick] (-.7,0) -- (0,1) node[left,pos=.6] {$[p^{k + \ell},p^{k' + \ell}]$};
        \draw[thick] (0,1) -- (.7,0) node[right,pos=.4] {$[p^{k' + \ell},p^{k' + \ell'}]$};
        \draw[thick] (.7,0) -- (-.7,0) node[below,pos=.5] {$[p^{k + \ell},p^{k' + \ell'}]$};
        \node at (0,.4) {\large $\sigma$};
    }\right ) = [p^{k + \ell},p^{k' + \ell'}] = N\left ([\frak{p}^{k}\overline{\frak{p}}^{\ell},\frak{p}^{k'}\overline{\frak{p}}^{\ell'}]\right ). 
\end{align*}
Therefore local formula (\ref{eq:localram}) holds for $p$ split. 

Finally, if $p$ is inert in $\orb_{K}$, let $S_{1}^{+}$ be the set of ``even intervals'' in $S_{1}$: 
$$
S_{1}^{+} = \{[p^{k},p^{\ell}]\in S_{1}\mid \ell - k\in 2\N_{0}\}. 
$$
Let $j^{+} : S_{1}^{+}\rightarrow S_{1}$ be the natural inclusion and define $S_{1}^{-}$ to be the complement $S_{1}^{-} = S_{1}\smallsetminus S_{1}^{+}$, that is, 
$$
S_{1}^{-} = \{[p^{k},p^{\ell}]\in S_{1}\mid \ell - k\not\in 2\N_{0}\}. 
$$
Also let $j^{-} : S_{1}^{-}\rightarrow S_{1}$ be the natural inclusion. Each term in the local formula is represented by a span: 
$$
N_{*}\zeta_{K,p} = \left (\tikz[baseline=10]{
  \node at (-1,0) (a) {$S_{1}$};
  \node at (1,0) (b) {$*$};
  \node at (0,1) (top) {$T_{1}$};
  \draw[->] (top) -- (a) node[above,pos=.7] {$N_{1}\;\;$};
  \draw[->] (top) -- (b);
}\right ) \quad \zeta_{\Q,p}*L_{p}(\chi)^{-} = \left (\roofx{S_{1}}{*}{B^{-}}{d_{1}\circ\alpha^{-}}{}\right ) \quad \zeta_{\Q,p}*L_{p}(\chi)^{+} = \left (\roofx{S_{1}}{*}{B^{+}}{d_{1}\circ\alpha^{+}}{}\right ). 
$$
The linear functor $N_{*}\zeta_{K,p} + \zeta_{\Q,p}*L_{p}(\chi)^{-}$ is represented by the span 
$$
N_{*}\zeta_{K,p} + \zeta_{\Q,p}*L_{p}(\chi)^{-} = \left (\roofx{S_{1}}{*}{T_{1}\coprod B^{-}}{N_{1}\sqcup d_{1}\circ\alpha^{-}}{}\right ). 
$$

This time, to prove the local formula, we will construct an isomorphism $\beta : T_{1}\coprod B^{-}\rightarrow B^{+}$ that makes the following diagram commute: 
\begin{center}
\begin{tikzpicture}[scale=1.3]
  \node at (-1,0) (a) {$S_{1}$};
  \node at (0,1) (b) {$B^{+}$};
  \node at (0,-1) (c) {$T_{1}\coprod B^{-}$};
  \node at (1,0) (d) {$*$};
  \draw[->] (b) -- (a) node[left,pos=.3] {$d_{1}\circ\alpha^{+}$};
  \draw[->] (b) -- (d);
  \draw[->] (c) -- (a) node[left,pos=.5] {$N\sqcup d_{1}\circ\alpha^{-}$};
  \draw[->] (c) -- (d);
  \draw[->] (c) -- (b) node[right,pos=.5] {$\beta$};
\end{tikzpicture}
\end{center}
In this case, $B^{-}$ and $B^{+}$ have the following descriptions: 
\begin{align*}
    B^{-} &= \biggl\{\sigma\in S_{2} \;\biggr\rvert\; d_{0}\sigma = [p^{k},p^{\ell}],\ell - k\not\in 2\N_{0}\biggr\}\\
    \text{and}\quad B^{+} &= \biggl\{\sigma\in S_{2} \;\biggr\rvert\; d_{0}\sigma = [p^{k},p^{\ell}],\ell - k\in 2\N_{0}\biggr\}. 
\end{align*}
As in the ramified case, the maps $B^{+}\rightarrow S_{1}$ and $B^{-}\rightarrow S_{1}$ are both the restrictions of $d_{1}$. We construct $\beta : T_{1}\coprod B^{-}\rightarrow B^{+}$ as follows. For $[p^{k},p^{\ell}]\in T_{1}$, let 
$$
\beta\biggl ([p^{k},p^{\ell}]\biggr ) = \tikz[baseline=5]{
        \draw[thick] (-.7,0) -- (0,1) node[left,pos=.6] {$[p^{2k},p^{2k}]$};
        \draw[thick] (0,1) -- (.7,0) node[right,pos=.4] {$[p^{2k},p^{2\ell}]$};
        \draw[thick] (.7,0) -- (-.7,0) node[below,pos=.5] {$[p^{2k},p^{2\ell}]$};
        \node at (0,.4) {\large $\tau$};
    }
$$
On the other hand, for $\sigma\in B^{-}$, let 
$$
\beta\left (\tikz[baseline=5]{
        \draw[thick] (-.7,0) -- (0,1) node[left,pos=.6] {$[p^{k},p^{\ell}]$};
        \draw[thick] (0,1) -- (.7,0) node[right,pos=.4] {$[p^{\ell},p^{m}]$};
        \draw[thick] (.7,0) -- (-.7,0) node[below,pos=.5] {$[p^{k},p^{m}]$};
        \node at (0,.4) {\large $\sigma$};
    }\right ) = \tikz[baseline=5]{
        \draw[thick] (-.7,0) -- (0,1) node[left,pos=.6] {$[p^{k},p^{\ell + 1}]$};
        \draw[thick] (0,1) -- (.7,0) node[right,pos=.4] {$[p^{\ell + 1},p^{m}]$};
        \draw[thick] (.7,0) -- (-.7,0) node[below,pos=.5] {$[p^{k},p^{m}]$};
        \node at (0,.4) {$\beta(\sigma)$};
    }
$$
The diagram commutes since for $[p^{k},p^{\ell}]\in T_{1}$ and $\sigma\in B^{-}$, 
\begin{align*}
  d_{1}(\beta([p^{k},p^{\ell}])) &= d_{1}(\tau) = [p^{2k},p^{2\ell}] = N([p^{k},p^{\ell}])\\
  \text{and}\quad d_{1}(\beta(\sigma)) &= [p^{k},p^{m}] = d_{1}(\sigma). 
\end{align*}
Finally, we construct $\beta^{-1} : B^{+}\rightarrow T_{1}\coprod B^{-}$ as follows. If $\tau\in B^{+}$ has degenerate $2$-face $d_{2}\tau$, we send $\tau$ to $d_{1}\tau\in T_{1}$: 
$$
\beta^{-1}\left (\tikz[baseline=5]{
        \draw[thick] (-.7,0) -- (0,1) node[left,pos=.6] {$[p^{k},p^{k}]$};
        \draw[thick] (0,1) -- (.7,0) node[right,pos=.4] {$[p^{k},p^{\ell}]$};
        \draw[thick] (.7,0) -- (-.7,0) node[below,pos=.5] {$[p^{k},p^{\ell}]$};
        \node at (0,.4) {\large $\tau$};
    }\right ) = [p^{k},p^{\ell}]. 
$$
On the other hand, every other $\tau\in B^{+}$ has $d_{2}\tau = [p^{k},p^{\ell}]$ for some even $\ell - k > 0$. We send these $\tau$ to the following $2$-simplex in $B^{-}$: 
$$
\beta^{-1}\left (\tikz[baseline=5]{
        \draw[thick] (-.7,0) -- (0,1) node[left,pos=.6] {$[p^{k},p^{\ell}]$};
        \draw[thick] (0,1) -- (.7,0) node[right,pos=.4] {$[p^{\ell},p^{m}]$};
        \draw[thick] (.7,0) -- (-.7,0) node[below,pos=.5] {$[p^{k},p^{m}]$};
        \node at (0,.4) {\large $\tau$};
    }\right ) = \tikz[baseline=5]{
        \draw[thick] (-.7,0) -- (0,1) node[left,pos=.6] {$[p^{k},p^{\ell - 1}]$};
        \draw[thick] (0,1) -- (.7,0) node[right,pos=.4] {$[p^{\ell - 1},p^{m}]$};
        \draw[thick] (.7,0) -- (-.7,0) node[below,pos=.5] {$[p^{k},p^{m}]$};
        \node at (0,.4) {\large $\tau$};
    }
$$
It's clear that $\beta$ and $\beta^{-1}$ are inverses. This completes the proof of the local formula in every case. 
\end{proof}

\begin{rem}
As in Remark~\ref{rem:splitnotcanonical}, in the split case, the isomorphism $N_{*}\zeta_{K,p}\cong \zeta_{\Q,p}*L_{p}(\chi)^{+}$ is \emph{not canonical} since it depends on the choice of $\frak{p}$ and $\overline{\frak{p}}$ lying over $p$.
\end{rem}

We now prove Theorem~\ref{thm:introthm}, which is both an analogue of Theorem~\ref{thm:introred} for the full incidence algebra of $S = (\N,\mid)$ and a global version of Theorem~\ref{thm:mainthmloc}. 

\begin{thm}[{Theorem~\ref{thm:introthm}}]
\label{thm:mainthm}
In the incidence algebra $I(S) = I(\N,\mid)$, there is an equivalence of linear functors 
$$
N_{*}\zeta_{K} + \zeta_{\Q}*L(\chi)^{-} \cong \zeta_{\Q}*L(\chi)^{+}. 
$$
\end{thm}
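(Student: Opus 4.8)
The plan is to prove Theorem~\ref{thm:mainthm} by globalizing the local formula of Theorem~\ref{thm:mainthmloc}, exactly as Theorem~\ref{thm:mainthmred} globalizes its local counterpart, with the extra feature that we are now working one simplicial dimension up. As warned in Remark~\ref{rem:redprod}, the naive product of the local spans is not the right object, so I would build the terms of the formula in $I(\N,\mid)$ directly, prime by prime. Recall that a $1$-simplex of $S = (\N,\mid)$ is an interval $[m,n]$ with $m\mid n$, factoring over primes as $\prod_p[p^{v_p(m)},p^{v_p(n)}]$, and a $2$-simplex is a chain $m\mid\ell\mid n$ drawn as the triangle with $d_2 = [m,\ell]$, $d_0 = [\ell,n]$, $d_1 = [m,n]$. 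First I would define $S_1^{+}$ (resp.\ $S_1^{-}$) to be the set of $1$-simplices of ``$+$-type'' (resp.\ ``$-$-type''): degenerate at each ramified prime, satisfying the split--multiplication constraint from the split case of Theorem~\ref{thm:mainthmloc} at each split prime, and of even (resp.\ odd) total inert exponent-difference $\sum_{p\text{ inert}}(v_p(n)-v_p(m))$. With $j^{\pm}$ the inclusion post-composed with the multiplication map on the split factors, $L(\chi)^{\pm}$ is the linear functor given by the span $S_1 \xleftarrow{j^{\pm}} S_1^{\pm} \to *$, and the convolution $\zeta_{\Q}*L(\chi)^{\pm}$ is represented by the span $S_1 \xleftarrow{d_1\circ\alpha^{\pm}} B^{\pm} \to *$, where $B^{\pm}$ is the pullback of $S_1\times S_1^{\pm}$ along $(d_2,d_0)\colon S_2\to S_1\times S_1$, i.e.\ the $2$-simplices whose $d_0$-face is of $\pm$-type (together with its split data). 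Using $\chi(p)=0,1,-1$ at ramified, split and inert primes, a cardinality count then shows $|L(\chi)^{+}|-|L(\chi)^{-}|$ evaluated on $[m,n]$ equals $\chi(n/m)$, so cardinalities recover formula~(\ref{eq:quadzeta}).

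With $N_*\zeta_K$ represented by the span $S_1 \xleftarrow{N_1} T_1 \to *$ as in Example~\ref{ex:pushfwd}, the asserted equivalence $N_*\zeta_K + \zeta_{\Q}*L(\chi)^{-}\cong\zeta_{\Q}*L(\chi)^{+}$ reduces to producing an isomorphism of spans over $S_1$, i.e.\ a bijection $\beta\colon T_1\coprod B^{-}\to B^{+}$ with $d_1\circ\alpha^{+}\circ\beta = N_1\sqcup(d_1\circ\alpha^{-})$. The construction of $\beta$ is the heart of the matter. On $B^{-}$: given $\sigma$ with middle vertex $\ell$, whose face $d_0\sigma=[\ell,n]$ has odd total inert exponent-difference, choose the smallest inert prime $q$ with $v_q(\ell)<v_q(n)$ and send $\sigma$ to the $2$-simplex with the same base $d_1=[m,n]$ but middle vertex $q\ell$; this fixes $m$ and $n$ and flips the inert parity of the $d_0$-face to even, so the image lies in $B^{+}$ over the same point of $S_1$. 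On $T_1$: send an ideal interval $[\frak{a},\frak{b}]$ to the $2$-simplex $\tau$ with $d_1\tau=[N\frak{a},N\frak{b}]$ whose middle vertex separates the ramified-plus-$\frak{p}$-split contributions to the norm from the $\overline{\frak{p}}$-split-plus-inert contributions, exactly as in the reduced global proof (Theorem~\ref{thm:mainthmred}) but now tracking both endpoints of each interval. Because an inert prime $p$ contributes $N(\frak{p})=p^2$, the face $d_0\tau$ has even inert parity and $d_2\tau$ is degenerate at every inert prime, so $\tau\in B^{+}$.

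To see $\beta$ is bijective I would write its inverse case by case: if $\tau\in B^{+}$ has $d_2\tau$ degenerate at every inert prime then $\tau$ lies in the image of $T_1$, and $\beta^{-1}(\tau)$ is recovered by redistributing the split-prime exponents of $d_2\tau$ and $d_0\tau$ onto $\frak{p}$ and $\overline{\frak{p}}$ and halving the inert exponents of $d_1\tau$; otherwise $d_2\tau$ is nondegenerate at some inert prime, and $\beta^{-1}(\tau)\in B^{-}$ is obtained by lowering the middle vertex by one factor of the smallest such prime. Checking that these are mutually inverse is routine, and since every step fixes the base $d_1$ one gets $d_1\circ\beta|_{T_1}=N_1$ (as $d_1\tau=[N\frak{a},N\frak{b}]$) and $d_1\circ\beta|_{B^{-}}=d_1\circ\alpha^{-}$, which is the required commutativity. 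Taking cardinalities then recovers~(\ref{eq:quadzeta}), as in Remark~\ref{rem:redcard}.

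The step I expect to be the main obstacle is the interface between the setup and the construction of $\beta$: arranging the global $2$-simplices of $(\N,\mid)$ so that the three local recipes — a harmless reindexing at ramified primes, the noncanonical $\frak{p}/\overline{\frak{p}}$ bookkeeping at split primes, and the parity shift at inert primes — can be performed simultaneously on a single $2$-simplex and assembled into one well-defined $\beta$ and $\beta^{-1}$. All the ideas are already present in the reduced global proof; the genuinely new bookkeeping is that each interval now carries a lower endpoint that must be threaded through all three recipes at once, and that the inert shift needs a global convention for which inert prime to move (we take the smallest one whose middle-vertex exponent can be raised), mirroring the single shift used in the local proof.
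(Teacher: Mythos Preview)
Your overall architecture is right and close to the paper's: represent each term by a span, reduce the equivalence to a bijection $\beta\colon T_1\coprod B^-\to B^+$ over $S_1$, and build $\beta$ by combining the three local recipes. The gap is in your actual bijection. Your inert convention uses the \emph{total} parity $\sum_{p\text{ inert}}(v_p(n)-v_p(\ell))$ and moves a \emph{single} inert prime (the smallest one with $v_q(\ell)<v_q(n)$), and this rule is not injective on $B^-$. Take $m=1$, $n=q_0q_1^2q_2$ with $q_0<q_1<q_2$ all inert. The $2$-simplices $\sigma_0=(1\mid q_0\mid n)$ and $\sigma_1=(1\mid q_1\mid n)$ both lie in $B^-$ (each $d_0$-face has total inert difference $3$). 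For $\sigma_0$ the smallest inert $q$ with $v_q(q_0)<v_q(n)$ is $q_1$, so $\beta(\sigma_0)$ has middle $q_0q_1$; for $\sigma_1$ the smallest such $q$ is $q_0$, so $\beta(\sigma_1)$ also has middle $q_0q_1$. Hence $\beta(\sigma_0)=\beta(\sigma_1)$, and your proposed inverse (lower by the smallest inert prime where $d_2\tau$ is nondegenerate) cannot recover both. The mismatch is structural: your forward map keys on the $d_0$-face while your inverse keys on the $d_2$-face, and these criteria do not agree.

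The paper avoids this by a different inert convention: $S_1^-$ consists of intervals whose \emph{every} inert prime factor appears to an odd power (and similarly $S_1^+$ with even), and $\beta$ shifts \emph{all} such inert primes simultaneously by one. This ``shift everything at once'' rule is its own inverse on the inert part, so the forward and backward criteria coincide and the bijection goes through. Your total-parity definition of $L(\chi)^\pm$ still has the correct cardinalities (the fibre counts over each $[m,n]$ agree), so an equivalence with your functors does exist; it is only your specific $\beta$ that fails. If you want to keep total parity you would need a genuinely different pairing rule, but the cleanest fix is to switch to the per-prime parity condition and the simultaneous shift.
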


\begin{proof}
As before, the terms in the global formula will be represented by spans 
\begin{align*}
    N_{*}\zeta_{K} &= \left (\tikz[baseline=10]{
      \node at (-1,0) (a) {$S_{1}$};
      \node at (1,0) (b) {$*$};
      \node at (0,1) (top) {$T_{1}$};
      \draw[->] (top) -- (a) node[above,pos=.7] {$N_{1}\;\;$};
      \draw[->] (top) -- (b);
    }\right )\\
    \zeta_{\Q}*L(\chi)^{-} &= \left ( \tikz[xscale=3,yscale=1.7,baseline=30]{
        \node at (0,0) (a) {$S_{1}$};
        \node at (1,0) (b) {$S_{1}\times S_{1}$};
        \node at (2,0) (c) {$*$};
        \node at (.5,.8) (x) {$S_{2}$};
        \node at (1.5,.8) (y) {$S_{1}\times S_{1}^{-}$};
        \node at (1,1.6) (z) {$B^{-}$};
        \draw[->] (z) -- (x) node[left,pos=.3] {$\alpha^{-}$};
        \draw[->] (z) -- (y);
        \draw[->] (x) -- (a) node[left,pos=.3] {$d_{1}$};
        \draw[->] (x) -- (b) node[right,pos=.1] {$(d_{2},d_{0})$};
        \draw[->] (y) -- (b) node[right,pos=.8] {$id\times j^{-}$};
        \draw[->] (y) -- (c);
    }\right )\\
    \text{and}\quad \zeta_{\Q}*L(\chi)^{+} &= \left ( \tikz[xscale=3,yscale=1.7,baseline=30]{
        \node at (0,0) (a) {$S_{1}$};
        \node at (1,0) (b) {$S_{1}\times S_{1}$};
        \node at (2,0) (c) {$*$};
        \node at (.5,.8) (x) {$S_{2}$};
        \node at (1.5,.8) (y) {$S_{1}\times S_{1}^{+}$};
        \node at (1,1.6) (z) {$B^{+}$};
        \draw[->] (z) -- (x) node[left,pos=.3] {$\alpha^{+}$};
        \draw[->] (z) -- (y);
        \draw[->] (x) -- (a) node[left,pos=.3] {$d_{1}$};
        \draw[->] (x) -- (b) node[right,pos=.1] {$(d_{2},d_{0})$};
        \draw[->] (y) -- (b) node[right,pos=.8] {$id\times j^{+}$};
        \draw[->] (y) -- (c);
    }\right )
\end{align*}
We need to define $j^{+} : S_{1}^{+}\rightarrow S_{1}$ and $j^{-} : S_{1}^{-}\rightarrow S_{1}$ and construct an isomorphism $\beta : T_{1}\coprod B^{-}\rightarrow B^{+}$ making the following diagram commute:
\begin{center}
\begin{tikzpicture}[scale=1.3]
  \node at (-1,0) (a) {$S_{1}$};
  \node at (0,1) (b) {$B^{+}$};
  \node at (0,-1) (c) {$T_{1}\coprod B^{-}$};
  \node at (1,0) (d) {$*$};
  \draw[->] (b) -- (a) node[left,pos=.3] {$d_{1}\circ\alpha^{+}$};
  \draw[->] (b) -- (d);
  \draw[->] (c) -- (a) node[left,pos=.4] {$N_{1}\sqcup d_{1}\circ\alpha^{-}$};
  \draw[->] (c) -- (d);
  \draw[->] (c) -- (b) node[right,pos=.5] {$\beta$};
\end{tikzpicture}
\end{center}

Define $S_{1}^{+}$ and $S_{1}^{-}$ by: 
\begin{align*}
    S_{1}^{+} &= \left\{([a,b],[a',b'])\in S_{1}\times S_{1} \left\lvert \begin{matrix}
        & v_{p}(a) = v_{p}(b),v_{p}(a') = v_{p}(b') = 0 \text{ for ramified } p\\
        & v_{p}(a)v_{p}(a')\not = 0,v_{p}(b)\not = 0 \text{ for split } p \phantom{aaaaaaaaaa}\\
        & v_{p}(b) - v_{p}(a) = 2k,v_{p}(a') = v_{p}(b') = 0 \text{ for inert } p
    \end{matrix}
    \right.\right\}\\
    S_{1}^{-} &= \{[a,b]\in S_{1} \mid v_{p}(a) = v_{p}(b) \text{ for ramified and split } p,\; v_{p}(b) - v_{p}(a) = 2k + 1 \text{ for inert } p\}. 
\end{align*}
Let $j^{-} : S_{1}^{-}\hookrightarrow S_{1}$ be the natural inclusion and let $j^{+} : S_{1}^{+}\rightarrow S_{1}$ be the restriction of the map $S_{1}\times S_{1}\rightarrow S_{1},([a,b],[a',b'])\mapsto [aa',bb']$. Then the sets $B^{+}$ and $B^{-}$ are given by: 
$$
B^{+} = \{(\sigma,x,(y,z))\in S_{2}\times S_{1}\times S_{1}^{+} \mid d_{2}\sigma = x,d_{0}\sigma = j^{+}(y,z)\} \quad\text{and}\quad B^{-} = \{\sigma\in S_{2} \mid d_{0}\sigma\in S_{1}^{-}\}. 
$$

Next, we construct the isomorphism $\beta : T_{1}\coprod B^{-}\rightarrow B^{+}$. For $[\frak{a},\frak{c}]\in T_{1}$, set 
$$
\beta\left ([\frak{a},\frak{c}]\right ) = \left (\tikz[baseline=5]{
        \draw[thick] (-.7,0) -- (0,1) node[left,pos=.6] {$[a,b]$};
        \draw[thick] (0,1) -- (.7,0) node[right,pos=.4] {$[b,c]$};
        \draw[thick] (.7,0) -- (-.7,0) node[below,pos=.5] {$[a,c]$};
        \node at (0,.4) {\large $\tau$};
    },[a,b],\biggl ([b',c'],[b'',c'']\biggr )\right )
$$
where  
\begin{align*}
    a &= \prod_{p\text{ ramified}} p^{v_{\frak{p}}(\frak{a})} \times \prod_{p\text{ split}} p^{v_{\frak{p}}(\frak{a}) + v_{\overline{\frak{p}}}(\frak{a})} \times \prod_{p\text{ inert}} p^{2v_{\frak{p}}(\frak{a})} = N(\frak{a})\\
    b &= \prod_{p\text{ ramified}} p^{v_{\frak{p}}(\frak{c})} \times \prod_{p\text{ split}} p^{v_{\frak{p}}(\frak{c}) + v_{\overline{\frak{p}}}(\frak{a})} \times \prod_{p\text{ inert}} p^{2v_{\frak{p}}(\frak{a})}\\
    c &= \prod_{p\text{ ramified}} p^{v_{\frak{p}}(\frak{c})} \times \prod_{p\text{ split}} p^{v_{\frak{p}}(\frak{c}) + v_{\overline{\frak{p}}}(\frak{c})} \times \prod_{p\text{ inert}} p^{2v_{\frak{p}}(\frak{c})} = N(\frak{c})\\
    b' &= \prod_{p\text{ ramified}} p^{v_{\frak{p}}(\frak{c})} \times \prod_{p\text{ split}} p^{v_{\frak{p}}(\frak{c})} \times \prod_{p\text{ inert}} p^{2v_{\frak{p}}(\frak{a})}\\
    c' &= \prod_{p\text{ ramified}} p^{v_{\frak{p}}(\frak{c})} \times \prod_{p\text{ split}} p^{v_{\frak{p}}(\frak{c})} \times \prod_{p\text{ inert}} p^{2v_{\frak{p}}(\frak{c})}\\
    b'' &= \prod_{p\text{ split}} p^{v_{\overline{\frak{p}}}(\frak{a})} \quad\text{and}\quad c'' = \prod_{p\text{ split}} p^{v_{\overline{\frak{p}}}(\frak{c})}. 
\end{align*}
On the other hand, for $\sigma\in B^{-}$, set 
$$
\beta\left (\tikz[baseline=5]{
        \draw[thick] (-.7,0) -- (0,1) node[left,pos=.6] {$[d,e]$};
        \draw[thick] (0,1) -- (.7,0) node[right,pos=.4] {$[e,f]$};
        \draw[thick] (.7,0) -- (-.7,0) node[below,pos=.5] {$[d,f]$};
        \node at (0,.4) {\large $\sigma$};
    }\right ) = \left (\tikz[baseline=5]{
        \draw[thick] (-.7,0) -- (0,1) node[left,pos=.6] {$[d,e']$};
        \draw[thick] (0,1) -- (.7,0) node[right,pos=.4] {$[e',f]$};
        \draw[thick] (.7,0) -- (-.7,0) node[below,pos=.5] {$[d,f]$};
        \node at (0,.4) {\large $\tau$};
    },[d,e'],([e',f],[1,1])\right )
$$
where 
$$
e' = \prod_{p\text{ ramified}} p^{v_{p}(e)} \times \prod_{p\text{ split}} p^{v_{p}(e)} \times \prod_{p\text{ inert}} p^{v_{p}(e) + 1}. 
$$
This defines $\beta$ on all of $T_{1}\coprod B^{-}$ and it is clear that $d_{1}\circ\alpha^{+}\circ\beta = N_{1}\sqcup d_{1}\circ\alpha^{-}$. An inverse can be constructed as follows. For
$$
\left (\tikz[baseline=5]{
        \draw[thick] (-.7,0) -- (0,1) node[left,pos=.6] {$[a,b]$};
        \draw[thick] (0,1) -- (.7,0) node[right,pos=.4] {$[b,c]$};
        \draw[thick] (.7,0) -- (-.7,0) node[below,pos=.5] {$[a,c]$};
        \node at (0,.4) {\large $\tau$};
    },[a,b],([b',c'],[b'',c''])\right )\in B^{+}
$$
with $v_{p}(a) = v_{p}(b)$ for all inert $p$, set 
$$
\beta^{-1}\left (\tikz[baseline=5]{
        \draw[thick] (-.7,0) -- (0,1) node[left,pos=.6] {$[a,b]$};
        \draw[thick] (0,1) -- (.7,0) node[right,pos=.4] {$[b,c]$};
        \draw[thick] (.7,0) -- (-.7,0) node[below,pos=.5] {$[a,c]$};
        \node at (0,.4) {\large $\tau$};
    },[a,b],([b',c'],[b'',c''])\right ) = [\frak{a},\frak{c}] \in T_{1}
$$
where 
\begin{align*}
    \frak{a} &= \prod_{p\mid a \text{ ramified}} \frak{p}^{v_{p}(a)} \times \prod_{p\mid a \text{ split}} \frak{p}^{v_{p}(b') - v_{p}(b) + v_{p}(a)}\overline{\frak{p}}^{v_{p}(b) - v_{p}(b')} \times \prod_{p\mid a \text{ inert}} \frak{p}^{v_{p}(a)}\\
    \text{and}\quad \frak{c} &= \prod_{p\mid c \text{ ramified}} \frak{p}^{v_{p}(c)} \times \prod_{p\mid c \text{ split}} \frak{p}^{v_{p}(c')}\overline{\frak{p}}^{v_{p}(c) - v_{p}(c')} \times \prod_{p\mid c \text{ inert}} \frak{p}^{v_{p}(c)}. 
\end{align*}
Otherwise, $v_{p}(b) > v_{p}(a)$ for some inert prime $p$ and we put 
$$
\beta^{-1}\left (\tikz[baseline=5]{
        \draw[thick] (-.7,0) -- (0,1) node[left,pos=.6] {$[a,b]$};
        \draw[thick] (0,1) -- (.7,0) node[right,pos=.4] {$[b,c]$};
        \draw[thick] (.7,0) -- (-.7,0) node[below,pos=.5] {$[a,c]$};
        \node at (0,.4) {\large $\tau$};
    },[a,b],([b',c'],[b'',c''])\right ) = \tikz[baseline=5]{
        \draw[thick] (-.7,0) -- (0,1) node[left,pos=.6] {$[a,d]$};
        \draw[thick] (0,1) -- (.7,0) node[right,pos=.4] {$[d,c]$};
        \draw[thick] (.7,0) -- (-.7,0) node[below,pos=.5] {$[a,c]$};
        \node at (0,.4) {\large $\sigma$};
    } \in B^{-}
$$
where 
$$
d = \prod_{p\mid b \text{ ramified}} p^{v_{p}(b)} \times \prod_{p\mid b \text{ split}} p^{v_{p}(b)} \times \prod_{p\mid b \text{ inert}} p^{v_{p}(b) - 1}. 
$$
This finishes the proof. 
\end{proof}

\begin{rem}
\label{rem:fullcard}
While Remark~\ref{rem:redcard} showed that taking the cardinality of Theorem~\ref{thm:mainthmred} recovers the familiar formula (\ref{eq:quadzeta}) for the Dedekind zeta function, there is no previously known numerical version of Theorem~\ref{thm:mainthm} as far as we could find. Nevertheless, applying the cardinality map $|\cdot| : I(\N,\mid)\rightarrow I_{\#}(\N,\mid)$ produces the following numerical formula:
$$
N_{*}\zeta_{K} = \zeta_{\Q}*(L(\chi)^{+} - L(\chi)^{-}). 
$$
More specifically, for any interval $[a,b]\subset (\N,\mid)$, 
$$
\biggl\lvert\biggl\{[\frak{a},\frak{b}]\subset (I_{K}^{+},\mid) : N(\frak{a}) = a,N(\frak{b}) = b\biggr\}\biggr\rvert = \sum_{a\mid d\mid b} \chi([d,b])
$$
where $\chi\in I_{\#}(\N,\mid)$ is the function 
$$
\chi([m,n]) = |(j^{+})^{-1}([m,n])| - |(j^{-})^{-1}([m,n])|. 
$$
\end{rem}

\subsection{Relation Between the Theorems}
\label{sec:reduction}

In this section, we use the notion of decalage (Section~\ref{sec:decompsets}) to pass from $I(\N,\mid)$ to $\widetilde{I}(\N,\mid) \cong I(\N^{\times})$ and compare Theorems~\ref{thm:mainthmred} and~\ref{thm:mainthm}. Recall that the identification $\widetilde{I}(\N,\mid) \cong I(\N^{\times})$ is achieved through an algebra homomorphism $I(\N^{\times})\hookrightarrow I(\N,\mid)$ which is in turn induced by a map of decomposition sets $(\N,\mid)\rightarrow\N^{\times}$ (see Example~\ref{ex:divposetdec} or \cite[Sec.~1.5.3]{gkt5}). It is also possible, by a similar technique, to pass from the full incidence algebra $I(\N,\mid)$ to the reduced subalgebra $\widetilde{I}(\N,\mid)$.

\begin{lem}
On the level of abstract incidence algebras, there is linear functor $I(\N,\mid)\rightarrow I(\N^{\times})$ which is the identity on $\widetilde{I}(\N,\mid) \cong I(\N^{\times})$. 
\end{lem}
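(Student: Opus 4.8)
The plan is to exhibit the desired functor as a concrete retraction of the embedding from Example~\ref{ex:divposetdec}, built from the canonical choice of interval representative $[1,n]$, using nothing more than the span calculus. First I would unwind the objective description: for any simplicial set $S$ the incidence algebra $I(S)=C(S)^{*}$ is, as an objective vector space, equivalent to $\cat{Set}_{/S_{1}}$, so $I(\N,\mid)\simeq\cat{Set}_{/(\N,\mid)_{1}}$ and $I(\N^{\times})\simeq\cat{Set}_{/(\N^{\times})_{1}}$, and a linear functor between them is a span between the two sets of $1$-simplices. Here $(\N,\mid)_{1}$ is the set of intervals $[a,b]$ with $a\mid b$, while $(\N^{\times})_{1}\cong(\N,\mid)_{0}=\N$; under these identifications the embedding $\iota : I(\N^{\times})\xrightarrow{\sim}\widetilde I(\N,\mid)\hookrightarrow I(\N,\mid)$ is pullback $\rho^{*}$ along the isomorphism-class map $\rho : (\N,\mid)_{1}\to(\N^{\times})_{1}$, $[a,b]\mapsto b/a$ — equivalently, the span with apex $(\N,\mid)_{1}$, left leg $\rho$ and right leg $\operatorname{id}$.

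Next I would construct the retraction. Since $1$ is the bottom element of $(\N,\mid)$, the assignment $s : (\N^{\times})_{1}\to(\N,\mid)_{1}$, $n\mapsto[1,n]$, is a canonical section of $\rho$, so that $\rho\circ s=\operatorname{id}_{(\N^{\times})_{1}}$. Let $r : I(\N,\mid)\to I(\N^{\times})$ be the linear functor $s^{*}$, represented by the span
$$
r=s^{*}=\left (\roofx{(\N,\mid)_{1}}{(\N^{\times})_{1}}{(\N^{\times})_{1}}{s}{\operatorname{id}}\right );
$$
on cardinalities $r$ restricts a function on intervals to those of the form $[1,n]$, which is exactly the identification $f\mapsto f(n):=f(1,n)$ appearing in Example~\ref{ex:divposet}. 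One checks that $s^{*}$ really is a colimit-preserving linear functor: it has the form $t_{!}s^{*}$ with $t=\operatorname{id}$, and it preserves colimits because in the locally cartesian closed category $\cat{Set}$ the pullback functor $s^{*}$ is a left adjoint (to $s_{*}$).

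Finally I would verify that $r\circ\iota\simeq\operatorname{id}_{I(\N^{\times})}$, which is the content of the lemma. Composition of linear functors is composition of spans by pullback: gluing the span for $\iota$ (apex $(\N,\mid)_{1}$, legs $\rho$ and $\operatorname{id}$) to the span for $r$ (apex $(\N^{\times})_{1}$, legs $s$ and $\operatorname{id}$) along $(\N,\mid)_{1}$ produces a span whose apex is $(\N,\mid)_{1}\times_{(\N,\mid)_{1}}(\N^{\times})_{1}\cong(\N^{\times})_{1}$, with legs $\rho\circ s=\operatorname{id}$ and $\operatorname{id}$; this is the identity linear functor on $I(\N^{\times})$. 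Hence $r$ is the identity on $\widetilde I(\N,\mid)\cong I(\N^{\times})$, and moreover $\iota\circ r=(s\circ\rho)^{*}$ is the idempotent projecting $I(\N,\mid)$ onto its reduced subalgebra.

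The only point requiring genuine care — and the step I would write out most explicitly — is the identification, in the first paragraph, of the Example~\ref{ex:divposetdec} embedding $\iota$ with the pullback $\rho^{*}$ at the level of $1$-simplices, so that the $r$ constructed above truly retracts the right map; everything afterwards is formal span bookkeeping. It should also be flagged that, unlike $\iota$, the functor $r$ is merely a linear functor and not an algebra homomorphism, since the choice of representative $[1,n]$ is incompatible with the convolution product.
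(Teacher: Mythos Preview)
Your proposal is correct and follows essentially the same approach as the paper: both build the retraction from the section $n\mapsto[1,n]$ (your $s$, the paper's $\gamma$) and realize the desired functor as the pullback/span $S_{1}\xleftarrow{\gamma}\widetilde S_{1}\xrightarrow{\operatorname{id}}\widetilde S_{1}$. The only cosmetic difference is that the paper first defines the map on incidence coalgebras and then dualizes, whereas you work directly with the identification $I(S)\simeq\cat{Set}_{/S_{1}}$; your explicit span-composition check of $r\circ\iota\simeq\operatorname{id}$ spells out what the paper leaves as ``easy to check''.
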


\begin{proof}
Following the notation in the proofs of Theorems~\ref{thm:mainthmred} and~\ref{thm:mainthm}, set $S = (\N,\mid)$ and $\widetilde{S} = \N^{\times}$. We start by constructing a linear functor in the other direction: $\varphi : C(\widetilde{S})\rightarrow C(S)$. Consider the span 
$$
\roofx{\widetilde{S}_{1}}{S_{1}}{\widetilde{S}_{1}}{\operatorname{id}}{\gamma}
$$
where $\gamma : n\mapsto [1,n]$. This induces the linear functor 
\begin{align*}
    \varphi : C(\widetilde{S}) &\longrightarrow C(S)\\
      (X\rightarrow \widetilde{S}_{1}) &\longmapsto (X\rightarrow \widetilde{S}_{1}\xrightarrow{\gamma} S_{1}). 
\end{align*}
Dualizing produces the desired linear functor $\varphi^{*} : I(S)\rightarrow I(\widetilde{S})$. Moreover, it's easy to check that the composition $I(\widetilde{S})\hookrightarrow I(S)\xrightarrow{\varphi^{*}} I(\widetilde{S})$ is the identity (see below). 
\end{proof}

Explicitly, for $f\in I(S)$ represented by a span 
$$
f = \left (\roof{S}{*}{F}\right )
$$
$\varphi^{*}f$ is the functor represented by 
$$
\varphi^{*}f = \left ( \tikz[xscale=3,yscale=1.7,baseline=30]{
        \node at (0,0) (a) {$\widetilde{S}_{1}$};
        \node at (1,0) (b) {$S_{1}$};
        \node at (2,0) (c) {$*$};
        \node at (.5,.8) (x) {$\widetilde{S}_{1}$};
        \node at (1.5,.8) (y) {$F$};
        \node at (1,1.6) (z) {$C$};
        \draw[->] (z) -- (x);
        \draw[->] (z) -- (y);
        \draw[->] (x) -- (a) node[left,pos=.3] {$\operatorname{id}$};
        \draw[->] (x) -- (b) node[right,pos=.3] {$\gamma$};
        \draw[->] (y) -- (b);
        \draw[->] (y) -- (c);
    }\right ). 
$$
At the level of numerical incidence algebras, this is the linear map taking $f\in I_{\#}(\N,\mid)$ to $\varphi^{*}f(n) = f([1,n])$. One can think of this operation as ``only remember the values of $f$ on the initial intervals $[1,n]$''. 

\begin{thm}
\label{thm:reduction}
The formula in Theorem~\ref{thm:mainthmred} is the image in $I(\N^{\times})$ of the formula in Theorem~\ref{thm:mainthm} under the linear functor $\varphi^{*} : I(\N,\mid)\rightarrow I(\N^{\times})$. 
\end{thm}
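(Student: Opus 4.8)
The plan is to exploit that $\varphi^{*}$ is a linear functor, hence additive, and to identify the $\varphi^{*}$-image of each of the three terms of Theorem~\ref{thm:mainthm} with the corresponding term of Theorem~\ref{thm:mainthmred}; since $\varphi^{*}$ is a functor it will then automatically send the span-level isomorphism $\beta$ witnessing Theorem~\ref{thm:mainthm} to one witnessing Theorem~\ref{thm:mainthmred}, provided the targets match.

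First I would record that, as in the Lemma above, $\varphi^{*}=\gamma^{*}$ is pullback along $\gamma:\widetilde{S}_{1}\hookrightarrow S_{1}$, $n\mapsto[1,n]$: a functor $f\in I(S)$ presented by a span $\bigl(S_{1}\xleftarrow{g}F\to *\bigr)$ is sent to the one presented by $\gamma^{*}g$. As $\gamma$ is a monomorphism this is the subset $g^{-1}\bigl(\gamma(\widetilde{S}_{1})\bigr)\subseteq F$ with its induced map to $\widetilde{S}_{1}=\N$ --- the objective version of ``keep only the values over the initial intervals $[1,n]$'', lifting the numerical description $\varphi^{*}f(n)=f([1,n])$. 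Because finite sums of linear functors are computed objectwise in $\cat{Set}$, $\varphi^{*}$ commutes with $+$; applying it to Theorem~\ref{thm:mainthm} therefore produces an equivalence $\varphi^{*}(N_{*}\zeta_{K})+\varphi^{*}(\zeta_{\Q}*L(\chi)^{-})\cong\varphi^{*}(\zeta_{\Q}*L(\chi)^{+})$. One cannot shortcut the last two terms via multiplicativity, since $\varphi^{*}$ is not an algebra map --- only the section $I(\widetilde{S})\hookrightarrow I(S)$ is --- so these images must be computed directly from the spans.

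Next I would match the three images term by term. For $N_{*}\zeta_{K}$: its span has apex the set of intervals $[\frak{a},\frak{c}]$ in $(I_{K}^{+},\mid)$ with left leg $[\frak{a},\frak{c}]\mapsto[N\frak{a},N\frak{c}]$, so $\gamma^{*}$ keeps exactly those with $N\frak{a}=1$, i.e.\ $\frak{a}=\orb_{K}$; this leaves $\{[\orb_{K},\frak{c}]\}\cong\widetilde{T}_{1}$ with induced leg $\frak{c}\mapsto N\frak{c}=\widetilde{N}_{1}$, which is the span of $\widetilde{N}_{*}\zeta_{K}$. For $\zeta_{\Q}*L(\chi)^{\pm}$: using the explicit descriptions of $B^{\pm}$ from the proof of Theorem~\ref{thm:mainthm}, I note that a $2$-simplex of $(\N,\mid)$ is a chain $a\mid d\mid b$, and demanding its bottom edge $[a,b]$ be initial forces $a=1$, so the chain $1\mid d\mid b$ becomes the $2$-simplex $(d,b/d)$ of the monoid $\widetilde{S}=\N^{\times}$. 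One then checks prime-by-prime over the three splitting types that the conditions cutting out $S_{1}^{\pm}$ (hence $B^{\pm}$) restrict to exactly the conditions cutting out $\widetilde{S}_{1}^{\pm}$ (hence $\widetilde{B}^{\pm}$): for instance $[d,b]\in S_{1}^{-}$ iff $v_{p}(b/d)=0$ for ramified and split $p$ and $v_{p}(b/d)$ is odd for inert $p$, which is precisely $b/d\in\widetilde{S}_{1}^{-}$, while for $S_{1}^{+}$ the pair-of-intervals datum is pinned down by its product once the bottom edge has source $1$. Thus $\varphi^{*}(\zeta_{\Q}*L(\chi)^{\pm})\cong\zeta_{\Q}*\widetilde{L}(\chi)^{\pm}$.

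Finally, since $\varphi^{*}=\gamma^{*}$ is a functor it sends the isomorphism $\beta:T_{1}\coprod B^{-}\to B^{+}$ of Theorem~\ref{thm:mainthm} to its restriction over the initial intervals; specializing the piecewise face-formulas for $\beta$ (in the proof of Theorem~\ref{thm:mainthm}) so that the relevant source equals $1$ and comparing with the formulas defining $\beta$ in the proof of Theorem~\ref{thm:mainthmred}, one sees they coincide under the identifications just made. This is the one step requiring genuine care: the bookkeeping that matches the prime-by-prime conditions and the piecewise definition of $\beta$ across the ramified, split and inert cases. Everything else is formal span manipulation, and once the matching is carried out the $\varphi^{*}$-image of the Theorem~\ref{thm:mainthm} equivalence is literally the Theorem~\ref{thm:mainthmred} equivalence.
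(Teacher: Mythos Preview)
Your proposal is correct and follows essentially the same route as the paper: both use linearity of $\varphi^{*}$ to reduce to a term-by-term check, describe $\varphi^{*}$ as pullback along $\gamma:n\mapsto[1,n]$, and then identify each pulled-back span with its reduced counterpart (the paper writes out the $N_{*}\zeta_{K}$ and $\zeta_{\Q}*L(\chi)^{-}$ cases explicitly and waves at the $+$ case, exactly as you do). Your extra step of tracking that the restricted isomorphism $\beta$ agrees with the $\beta$ of Theorem~\ref{thm:mainthmred} goes slightly beyond what the paper proves, but is a natural strengthening and costs nothing once the term identifications are in place.
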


\begin{proof}
First, $\varphi^{*}$ is linear so the sum on the left side of the formula is preserved. We need only check that $\varphi^{*}$ takes each term in the formula in $I(\N,\mid)$ to its counterpart in $I(\N^{\times})$. For $N_{*}\zeta_{K}$, we have 
$$
\varphi^{*}N_{*}\zeta_{K} = \left ( \tikz[xscale=3,yscale=1.7,baseline=30]{
        \node at (0,0) (a) {$\widetilde{S}_{1}$};
        \node at (1,0) (b) {$S_{1}$};
        \node at (2,0) (c) {$*$};
        \node at (.5,.8) (x) {$\widetilde{S}_{1}$};
        \node at (1.5,.8) (y) {$S_{1}$};
        \node at (1,1.6) (z) {$C$};
        \draw[->] (z) -- (x);
        \draw[->] (z) -- (y);
        \draw[->] (x) -- (a) node[left,pos=.3] {$\operatorname{id}$};
        \draw[->] (x) -- (b) node[right,pos=.3] {$\gamma$};
        \draw[->] (y) -- (b) node[left,pos=.3] {$\operatorname{id}$};
        \draw[->] (y) -- (c);
    }\right ) = \left (\roofx{\widetilde{S}_{1}}{*}{\widetilde{S}_{1}}{\operatorname{id}}{}\right ) = \widetilde{N}_{*}\zeta_{K}. 
$$
For $\zeta_{\Q}*L(\chi)^{-}$, we have
$$
\varphi^{*}(\zeta_{\Q}*L(\chi)^{-}) = \left ( \tikz[xscale=3,yscale=1.7,baseline=30]{
        \node at (0,0) (a) {$\widetilde{S}_{1}$};
        \node at (1,0) (b) {$S_{1}$};
        \node at (2,0) (c) {$*$};
        \node at (.5,.8) (x) {$\widetilde{S}_{1}$};
        \node at (1.5,.8) (y) {$B^{-}$};
        \node at (1,1.6) (z) {$C^{-}$};
        \draw[->] (z) -- (x);
        \draw[->] (z) -- (y);
        \draw[->] (x) -- (a) node[left,pos=.3] {$\operatorname{id}$};
        \draw[->] (x) -- (b) node[right,pos=.3] {$\gamma$};
        \draw[->] (y) -- (b) node[left,pos=.3] {$d_{1}$};
        \draw[->] (y) -- (c);
    }\right )
$$
so to verify that $\varphi^{*}(\zeta_{\Q}*L(\chi)^{-}) \cong \zeta_{\Q}*\widetilde{L}(\chi)^{-}\in I(\widetilde{S})$, it's enough to check that $C^{-}$ is isomorphic to $\widetilde{B}^{-}$ from the proof of Theorem~\ref{thm:mainthmred}. The maps 
\begin{align*}
    C^{-} &\longrightarrow \widetilde{S}_{2}, \quad \tikz[baseline=5]{
        \draw[thick] (-.7,0) -- (0,1) node[left,pos=.6] {$[1,b]$};
        \draw[thick] (0,1) -- (.7,0) node[right,pos=.4] {$[b,c]$};
        \draw[thick] (.7,0) -- (-.7,0) node[below,pos=.5] {$[1,c]$};
        \node at (0,.4) {\large $\sigma$};
    } \longmapsto \tikz[baseline=5]{
        \draw[thick] (-.7,0) -- (0,1) node[left,pos=.6] {$b$};
        \draw[thick] (0,1) -- (.7,0) node[right,pos=.4] {$\frac{c}{b}$};
        \draw[thick] (.7,0) -- (-.7,0) node[below,pos=.5] {$c$};
        \node at (0,.4) {\large $\tau$};
    }\\
    \text{and}\quad C^{-} &\longrightarrow \widetilde{S}_{1}\times\widetilde{S}_{1}^{-}, \quad \tikz[baseline=5]{
        \draw[thick] (-.7,0) -- (0,1) node[left,pos=.6] {$[1,b]$};
        \draw[thick] (0,1) -- (.7,0) node[right,pos=.4] {$[b,c]$};
        \draw[thick] (.7,0) -- (-.7,0) node[below,pos=.5] {$[1,c]$};
        \node at (0,.4) {\large $\sigma$};
    } \longmapsto \left (b,\frac{c}{b}\right )
\end{align*}
induce a map $C^{-}\rightarrow\widetilde{B}^{-}$. Conversely, 
\begin{align*}
    \widetilde{B}^{-} &\longrightarrow \widetilde{S}_{1}, \quad \tikz[baseline=5]{
        \draw[thick] (-.7,0) -- (0,1) node[left,pos=.6] {$a$};
        \draw[thick] (0,1) -- (.7,0) node[right,pos=.4] {$b$};
        \draw[thick] (.7,0) -- (-.7,0) node[below,pos=.5] {$c$};
        \node at (0,.4) {\large $\sigma$};
    } \longmapsto c\\
    \text{and}\quad \widetilde{B}^{-} &\longrightarrow B^{-}, \quad \tikz[baseline=5]{
        \draw[thick] (-.7,0) -- (0,1) node[left,pos=.6] {$a$};
        \draw[thick] (0,1) -- (.7,0) node[right,pos=.4] {$b$};
        \draw[thick] (.7,0) -- (-.7,0) node[below,pos=.5] {$c$};
        \node at (0,.4) {\large $\sigma$};
    } \longmapsto \tikz[baseline=5]{
        \draw[thick] (-.7,0) -- (0,1) node[left,pos=.6] {$[1,a]$};
        \draw[thick] (0,1) -- (.7,0) node[right,pos=.4] {$[a,ab]$};
        \draw[thick] (.7,0) -- (-.7,0) node[below,pos=.5] {$[1,c]$};
        \node at (0,.4) {\large $\tau$};
    }
\end{align*}
induce its inverse $\widetilde{B}^{-}\rightarrow C^{-}$. This shows $\varphi^{*}(\zeta_{\Q}*L(\chi)^{-}) \cong \zeta_{\Q}*\widetilde{L}(\chi)^{-}$ as required. A similar proof shows $\varphi^{*}(\zeta_{\Q}*L(\chi)^{+}) \cong \zeta_{\Q}*\widetilde{L}(\chi)^{+}$. 
\end{proof}

\begin{rem}
Although $\varphi^{*}$ preserves the products $\zeta_{\Q}*L(\chi)^{-}$ and $\zeta_{\Q}*L(\chi)^{+}$, it \emph{does not} preserve products in general. However, for $f\in I(S)$ and $g\in\widetilde{I}(S)$, $\varphi^{*}(f*g) \cong \varphi^{*}f*\varphi^{*}g$. 
\end{rem}

\subsection{General Quadratic Extension}
\label{sec:genquad}

The proofs of Theorems~\ref{thm:mainthmred} and~\ref{thm:mainthm} generalize easily to arbitrary quadratic extensions of number fields, although interpreting these formulas in terms of Dirichlet series requires an extra step. 

Let $F/\Q$ be any number field and let $K/F$ be a quadratic extension. By class field theory, there is a quadratic Hecke character $\chi : I_{F}\rightarrow\C^{\times}$ attached to this extension. One can check that it satisfies 
$$
\chi(p) = \begin{cases}
    0, &\text{if $p$ ramifies in } K\\
    1, &\text{if $p$ splits in } K\\
    -1, &\text{if $p$ is inert in } K
\end{cases}
$$
for prime ideals $p$ in $\orb_{F}$. Set $\widetilde{S} = I_{F}^{\times},\widetilde{T} = I_{K}^{\times},S = (I_{F}^{+},\mid)$ and $T = (I_{K}^{+},\mid)$. Also let $\widetilde{N} : \widetilde{T}\rightarrow\widetilde{S}$ and $N : T\rightarrow S$ be the simplicial maps induced by the ideal norm $N_{K/F}$ and let $\widetilde{N}_{*}$ and $N_{*}$ the pushforwards on the respective incidence algebras. 

\begin{thm}
\label{thm:mainthmarb}
Let $K/F$ be a quadratic extension of number fields. 
\begin{enumerate}[(1)]
\item In the reduced incidence algebra $\widetilde{I}(S) \cong I(\widetilde{S})$, there is an equivalence of linear functors 
$$
\widetilde{N}_{*}\zeta_{K} + \zeta_{F}*\widetilde{L}(\chi)^{-} \cong \zeta_{F}*\widetilde{L}(\chi)^{+}. 
$$

\item In the full incidence algebra $I(S)$, there is an equivalence of linear functors 
$$
N_{*}\zeta_{K} + \zeta_{F}*L(\chi)^{-} \cong \zeta_{F}*L(\chi)^{+}. 
$$
\end{enumerate}
\end{thm}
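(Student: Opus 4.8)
The plan is to observe that the proofs of Theorems~\ref{thm:mainthmred} and~\ref{thm:mainthm} are purely formal in the data they use: unique factorization of ideals in the Dedekind rings $\Z$ and $\orb_{K}$, the trichotomy of splitting types (ramified, split, inert), the behavior of the norm on a prime of $\orb_{K}$ lying over a given rational prime in each of the three cases, and the decalage structure $(I_{K}^{+},\mid)\cong\Dec_{\perp}(I_{K}^{\times})$ from Example~\ref{ex:Kdivposetdec}. Each of these is available verbatim for a quadratic extension $K/F$ of number fields once prime integers are replaced by prime ideals $\frak{p}$ of $\orb_{F}$ and the valuations $v_{p}$ by $v_{\frak{p}}$: the sets $\widetilde{S}=I_{F}^{\times}$ and $\widetilde{T}=I_{K}^{\times}$ are monoids, hence decomposition sets; $S=(I_{F}^{+},\mid)$ and $T=(I_{K}^{+},\mid)$ are locally finite posets, hence decomposition sets; and the argument of \cite[Lem.~2.2.2]{gkt5} used in Example~\ref{ex:Kdivposetdec} gives $(I_{F}^{+},\mid)\cong\Dec_{\perp}(I_{F}^{\times})$ together with the identification $\widetilde{I}(S)\cong I(\widetilde{S})$ that makes the statement of part~(1) meaningful. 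So the strategy is to transpose both proofs.

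For part~(1), I would define the objective vectors $j^{\pm}\colon\widetilde{S}_{1}^{\pm}\to\widetilde{S}_{1}$, and hence the functors $\widetilde{L}(\chi)^{\pm}$, by the same prescriptions as in the proof of Theorem~\ref{thm:mainthmred}, now imposing the stated conditions on $v_{\frak{p}}$ at the ramified, split and inert primes of $\orb_{F}$; the convolutions $\zeta_{F}*\widetilde{L}(\chi)^{\pm}$ are then represented by the same spans, with $\widetilde{B}^{\pm}=\{\sigma\in\widetilde{S}_{2}\mid d_{0}\sigma\in\widetilde{S}_{1}^{\pm}\}$. The arithmetic of $K/F$ enters only in the construction of the bijection $\beta\colon\widetilde{T}_{1}\coprod\widetilde{B}^{-}\to\widetilde{B}^{+}$, through the facts that over a ramified $\frak{p}$ there is a single prime $\frak{P}$ with $N_{K/F}(\frak{P})=\frak{p}$, over a split $\frak{p}$ there are conjugate primes $\frak{P},\overline{\frak{P}}$ with $N_{K/F}(\frak{P})=N_{K/F}(\overline{\frak{P}})=\frak{p}$, and over an inert $\frak{p}$ the prime $\frak{p}\orb_{K}$ has norm $\frak{p}^{2}$. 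With these substitutions, the formulas for $\beta$, its inverse, and the verification that $d_{1}\circ\alpha^{+}\circ\beta=\widetilde{N}_{1}\sqcup(d_{1}\circ\alpha^{-})$ go through unchanged.

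For part~(2), I would run the proof of Theorem~\ref{thm:mainthm} in the same way, now with intervals $[\frak{a},\frak{b}]$ in $(I_{F}^{+},\mid)$: define $S_{1}^{+}\subseteq S_{1}\times S_{1}$ and $S_{1}^{-}\subseteq S_{1}$ by imposing, prime by prime, the same conditions on the $\frak{p}$-adic valuations of the endpoints as in that proof, put $B^{+}=\{(\sigma,x,(y,z))\mid d_{2}\sigma=x,\ d_{0}\sigma=j^{+}(y,z)\}$ and $B^{-}=\{\sigma\in S_{2}\mid d_{0}\sigma\in S_{1}^{-}\}$, and transpose the case-split formulas defining $\beta\colon T_{1}\coprod B^{-}\to B^{+}$ and $\beta^{-1}$ (the split being according to whether $v_{\frak{p}}(a)=v_{\frak{p}}(b)$ at every inert $\frak{p}$ dividing the relevant endpoints). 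Commutativity of the triangle and bijectivity of $\beta$ are checked exactly as before.

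I do not expect a genuine obstacle: the whole point is that the constructions of Sections~\ref{sec:mainthmred} and~\ref{sec:mainthmfull} never used any feature of $\Q$ beyond possessing a Dedekind ring of integers, a quadratic extension, and a character with the stated local values. Two points deserve (routine) care. First, one should record the decalage statement $(I_{F}^{+},\mid)\cong\Dec_{\perp}(I_{F}^{\times})$ and the resulting identification $\widetilde{I}(S)\cong I(\widetilde{S})$, which is the evident analogue of Example~\ref{ex:Kdivposetdec}. Second, as in Remark~\ref{rem:splitnotcanonical}, at each split prime the bijection $\beta$ depends on a choice of $\frak{P}$ and $\overline{\frak{P}}$ over $\frak{p}$---equivalently, on a section of $\Spec\orb_{K}\to\Spec\orb_{F}$ over the split locus---even though the statement does not. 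Finally, translating part~(1) into the classical factorization $\zeta_{K}(s)=\zeta_{F}(s)L(\chi,s)$ of Dedekind zeta functions as Dirichlet series, with $L(\chi,s)$ the Hecke $L$-function of the quadratic Hecke character $\chi$, is the extra step referred to at the start of this section: it requires pushing the equivalence of part~(1) forward along the absolute norm $N_{F/\Q}$ before passing to cardinalities, and I would present this as a separate remark rather than as part of the proof of the theorem.
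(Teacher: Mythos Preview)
Your proposal is correct and matches the paper's approach exactly: the paper's proof is the single sentence ``Identical to the proofs of Theorems~\ref{thm:mainthmred} and~\ref{thm:mainthm},'' and what you have written is a faithful unpacking of that sentence. Your additional remarks on the decalage for $F$, the noncanonical choice at split primes, and the further pushforward along $N_{F/\Q}$ also align with the paper, which records the last of these as a separate corollary.
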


\begin{proof}
Identical to the proofs of Theorems~\ref{thm:mainthmred} and~\ref{thm:mainthm}. 
\end{proof}

\begin{cor}
In the numerical incidence algebra $I_{\#}(I_{F}^{\times})$, there is a formula 
$$
\widetilde{N}_{*}\zeta_{K} \cong \zeta_{F}*\widetilde{L}(\chi)
$$
where $\widetilde{L}(\chi) = \widetilde{L}(\chi)^{+} - \widetilde{L}(\chi)^{-}$. Furthermore, applying the pushforward along $N_{K/\Q}$ yields the following formula of Dirichlet series: 
$$
\zeta_{K}(s) = \zeta_{F}(s)L(\chi,s)
$$
where $L(\chi,s)$ is the $L$-function of the Hecke character $\chi$. 
\end{cor}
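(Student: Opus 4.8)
The plan is to deduce both displayed formulas from Theorem~\ref{thm:mainthmarb}(1) by first passing to the numerical incidence algebra, where additive inverses are available, and then pushing forward along the ideal norm into a ring of Dirichlet series. Concretely, I would apply the cardinality map $|\cdot| : I(\widetilde S)\to I_\#(\widetilde S)$ with $\widetilde S = I_F^\times$ to the equivalence of Theorem~\ref{thm:mainthmarb}(1). Since $|\cdot|$ sends disjoint unions to sums and, by the fibrewise description of the convolution span as a coproduct of cartesian products, sends the objective convolution to the numerical one, it is a homomorphism of algebras; together with $|\zeta_F| = \zeta_F$ this gives
$$
|\widetilde N_*\zeta_K| + \zeta_F * |\widetilde L(\chi)^-| = \zeta_F * |\widetilde L(\chi)^+|
$$
in $I_\#(I_F^\times)$. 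Because $I_\#(I_F^\times)$ is an honest $\C$-algebra we may rearrange, obtaining $|\widetilde N_*\zeta_K| = \zeta_F * (|\widetilde L(\chi)^+| - |\widetilde L(\chi)^-|)$; setting $\widetilde L(\chi) := \widetilde L(\chi)^+ - \widetilde L(\chi)^-$ yields the first formula. The bookkeeping of which ideals lie in the images of $j^+$ and $j^-$ — the accounting behind Remark~\ref{rem:redcard}, now over the primes of $F$, and arranged by construction of $j^\pm$ so that $\widetilde L(\chi)^+(\frak n) - \widetilde L(\chi)^-(\frak n) = \chi(\frak n)$ — identifies $\widetilde L(\chi)$ with the multiplicative function $\frak n\mapsto\chi(\frak n)$ on $I_F^\times$, so that in fact $|\widetilde N_*\zeta_K| = \zeta_F * \chi$.

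For the Dirichlet-series statement, I would push this identity forward along the ideal norm $\widetilde N_{F/\Q} : I_F^\times\to\N^\times$. Unlike the pushforwards between poset incidence algebras, which need not respect convolution, $\widetilde N_{F/\Q}$ is a homomorphism of monoids, and the induced map $(\widetilde N_{F/\Q})_* : I_\#(I_F^\times)\to I_\#(\N^\times)$ is consequently a homomorphism of algebras — a one-line check using the description $(f*g)(n) = \sum_{ab = n} f(a)g(b)$ of the convolution on a numerical monoid incidence algebra. Under the isomorphism $I_\#(\N^\times)\cong DS(\Q)$ of Example~\ref{ex:divposet}, convolution becomes multiplication of Dirichlet series; moreover $(\widetilde N_{F/\Q})_*\zeta_F$ is the Dedekind zeta function $\zeta_F(s)$, and $(\widetilde N_{F/\Q})_*\chi = \sum_{\frak n} \chi(\frak n) N_{F/\Q}(\frak n)^{-s} = L(\chi,s)$. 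For the left-hand side, functoriality of pushforward together with $\widetilde N_{F/\Q}\circ\widetilde N_{K/F} = \widetilde N_{K/\Q}$ give $(\widetilde N_{F/\Q})_*|\widetilde N_*\zeta_K| = |(\widetilde N_{K/\Q})_*\zeta_K|$, which corresponds to $\sum_{\frak a\in I_K^+} N_{K/\Q}(\frak a)^{-s} = \zeta_K(s)$. Applying the algebra homomorphism $(\widetilde N_{F/\Q})_*$ to $|\widetilde N_*\zeta_K| = \zeta_F * \chi$ therefore yields $\zeta_K(s) = \zeta_F(s)L(\chi,s)$, as claimed.

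The step I expect to be the main obstacle is conceptual rather than computational: the rearrangement $A + B = C \leadsto A = C - B$ in the first paragraph has no meaning at the objective level, since $I(I_F^\times)$ has no additive inverses, and becomes legitimate only after applying $|\cdot|$ — which is exactly why the corollary is phrased in the numerical incidence algebra rather than lifted to an equivalence of spans. Everything else — that cardinality and pushforward along a monoid map respect convolution, and the Euler-factor bookkeeping identifying $\widetilde L(\chi)$ with $\chi$ — is routine given the explicit spans constructed in the proofs of Theorems~\ref{thm:mainthmred}, \ref{thm:mainthm} and~\ref{thm:mainthmarb}.
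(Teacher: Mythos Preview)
Your proposal is correct and follows essentially the same approach as the paper: apply the cardinality map to Theorem~\ref{thm:mainthmarb}(1) to obtain the first formula in $I_\#(I_F^\times)$, then push forward along the norm to $\N^\times$ to obtain the Dirichlet-series identity. You fill in more detail than the paper's two-sentence proof --- in particular the observation that the relevant pushforward is along $\widetilde N_{F/\Q}$, with the factorization $\widetilde N_{F/\Q}\circ\widetilde N_{K/F} = \widetilde N_{K/\Q}$ accounting for the appearance of $\zeta_K(s)$ on the left, and the explicit verification that both cardinality and pushforward along a monoid homomorphism respect convolution --- but the underlying strategy is identical.
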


\begin{proof}
The first statement follows by applying cardinality $|\cdot| : I(I_{F}^{\times})\rightarrow I_{\#}(I_{F}^{\times})$ to the formula in Theorem~\ref{thm:mainthmarb}(1). The second statement can be obtained by pushing forward along $N_{K/\Q}$ and identifying $L(\chi,s)$ with the pushforward of $L(\chi)$. 
\end{proof}

\begin{rem}
There is a similar result in the full numerical incidence algebra $I_{\#}(I_{F}^{+},\mid)$, though it doesn't have an interpretation in terms of Dirichlet series (see Remark~\ref{rem:fullcard}). 
\end{rem}

\section{Relative Zeta Functions}
\label{sec:relzeta}

In this section, we introduce a notion of relative zeta function in an abstract incidence algebra. This term is intended to generalize the following situation. For a quadratic extension $K/\Q$, equation (\ref{eq:quadzeta}) can be rewritten as 
$$
L(\chi,s) = \frac{\zeta_{K}(s)}{\zeta_{\Q}(s)}. 
$$
We should think of the $L$-function $L(\chi,s)$ as a ``zeta function relative to the extension $K/\Q$''. This notion is formalized in the next definition. 

\begin{defn}
Let $S$ be a decomposition set. A {\bf relative zeta function} for $S$ is an element $f\in I(S)$ such that $\zeta_{S}*f \cong \varphi_{*}\zeta_{T}$ for some decomposition set $T$ and some simplicial map $\varphi : T\rightarrow S$. 
\end{defn}

Replacing $I(S)$ with the numerical incidence algebra $I_{\#}(S)$, we can also talk about numerical relative zeta functions. In general, a relative zeta function ``cuts out a simplicial map'' $T\rightarrow S$, much as $L(\chi,s)$ above cuts out the quadratic extension $K/\Q$. 

\begin{ex}
The unit $\delta\in I(S)$ is a relative zeta function, since $\zeta_{S}*\delta = \zeta_{S} = id_{*}\zeta_{S}$. That is, $\delta$ cuts out the identity map $S\rightarrow S$. 
\end{ex}

\begin{ex}
\label{ex:relMob}
Suppose $S_{0} = \{x\}$. In the numerical incidence algebra $I_{\#}(S)$, the M\"{o}bius function $\mu_{S}$ is the inverse of $\zeta_{S}$, so $\mu_{S}$ is a relative zeta function: $\zeta_{S}*\mu_{S} = \delta = \iota_{*}\zeta_{\bullet}$ where $\bullet$ is the trivial simplicial space (a unique simplex in every dimension) and $\iota : \bullet\hookrightarrow S$ is the simplicial map taking $*\in\bullet_{0}$ to the unique $x\in S_{0}$ and $*\in\bullet_{k}$ to $s_{0}^{k}(x)\in S_{k}$ for each $k\geq 1$. In other words, $\mu_{S}$ cuts out the degenerate simplices $\{s_{0}^{k}(x)\}$ as a simplicial subset of $S$. In the abstract incidence algebra $I(S)$, $\zeta_{S}$ need not be invertible. Instead, the authors in \cite{gkt2} prove a sign-free version of M\"{o}bius inversion \cite[Thm.~3.8]{gkt2}: 
$$
\delta + \zeta_{S}*\mu_{1} \cong \zeta_{S}*\mu_{0}
$$
which becomes 
$$
\delta = \zeta_{S}*(\mu_{0} - \mu_{1}) = \zeta_{S}*\mu_{S}
$$
in the numerical incidence algebra. Therefore we can interpret $\mu_{0}$ and $\mu_{1}$ (denoted $\Phi_{\operatorname{even}}$ and $\Phi_{\operatorname{odd}}$ in {\it loc.~cit.}) as an additive decomposition of a numerical relative zeta function. We don't have a good name for this phenomenon, but the main results in Section~\ref{sec:mainthm} show that it is more prevalent than just the M\"{o}bius inversion principle. 
\end{ex}

\begin{ex}
Let $\widetilde{S} = \N^{\times}$ and fix a prime integer $p$. Write $\widetilde{S} = \widetilde{S}_{p}\times\prod_{\ell\not = p} \widetilde{S}_{\ell}$ where $\widetilde{S}_{\ell} = \{\ell^{k}\}^{\times}$ as a submonoid of $\widetilde{S}$. Then the element $f_{p}$ in the numerical incidence algebra $I_{\#}(\widetilde{S})$ defined by 
$$
f_{p} = \delta_{p}\otimes\bigotimes_{\ell\not = p} \mu_{\widetilde{S}_{\ell}} \in I_{\#}(\widetilde{S}_{p})\otimes\bigotimes_{\ell\not = p}\!' I_{\#}(\widetilde{S}_{\ell}) \cong I_{\#}(\widetilde{S})
$$
is a relative zeta function: $\zeta_{\widetilde{S}}*f_{p} = (j_{p})_{*}\zeta_{\widetilde{S}_{p}}$, where $j_{p} : \widetilde{S}_{p}\hookrightarrow\widetilde{S}$ is the inclusion as a submonoid. That is, $f_{p}$ cuts out the submonoid $\widetilde{S}_{p}$. Passing to Dirichlet series, this becomes a more familiar statement: 
$$
\zeta_{\Q}(s)\prod_{\ell\not = p} (1 - \ell^{-s}) = \frac{1}{1 - p^{-s}}. 
$$
That is, inverse prime factors are relative zeta functions for the complementary prime factors. This can be done for any number of prime factors. 
\end{ex}

\begin{ex}
\label{ex:relquad}
Let $K/\Q$ be a quadratic number field, $\widetilde{S} = \N^{\times}$ and $\widetilde{T} = I_{K}^{\times}$. By Theorem~\ref{thm:mainthmred}, 
$$
\widetilde{N}_{*}\zeta_{\widetilde{T}} + \zeta_{\widetilde{S}}*\widetilde{L}(\chi)^{-} \cong \zeta_{\widetilde{S}}*\widetilde{L}(\chi)^{+}
$$
so $\widetilde{L}(\chi)^{+}$ and $\widetilde{L}(\chi)^{-}$ constitute a ``decomposed'' relative zeta function, similar to $\mu_{0}$ and $\mu_{1}$ in Example~\ref{ex:relMob}. As discussed in Remark~\ref{rem:redcard}, passing to the numerical incidence algebra realizes the quadratic character $\chi$ as a relative zeta function:
$$
\widetilde{N}_{*}\zeta_{\widetilde{T}} = \zeta_{\widetilde{S}}*(\widetilde{L}(\chi)^{+} - \widetilde{L}(\chi)^{-}) = \zeta_{\widetilde{S}}*\widetilde{L}(\chi)
$$
so $\widetilde{L}(\chi)$ cuts out the quadratic extension $\widetilde{N} : \widetilde{T}\rightarrow\widetilde{S}$. 
\end{ex}

\begin{ex}
Likewise, for $S = (\N,\mid)$ and $T = (I_{K}^{+},\mid)$, Theorem~\ref{thm:mainthm} shows that $L(\chi)^{+}$ and $L(\chi)^{-}$ determine a numerical relative zeta function: 
$$
N_{*}\zeta_{T} = \zeta_{S}*(L(\chi)^{+} - L(\chi)^{-}) = \zeta_{S}*L(\chi)
$$
and $L(\chi) = L(\chi)^{+} - L(\chi)^{-}$ again cuts out the quadratic extension $N : T\rightarrow S$. 
\end{ex}

\begin{ex}
On certain local factors, $\widetilde{L}_{p}(\chi)^{+}$ (resp.~$L_{p}(\chi)^{+}$) is itself a relative zeta function for $\widetilde{S}_{p}$ (resp.~for $S_{p}$). Indeed, the proof of Theorem~\ref{thm:mainthmred} (resp.~Theorem~\ref{thm:mainthm}) shows that when $p$ is a ramified or split prime, $\widetilde{N}_{*}\zeta_{\widetilde{T}_{p}} \cong \zeta_{\widetilde{S}_{p}}*\widetilde{L}_{p}(\chi)^{+}$ (resp.~$N_{*}\zeta_{T_{p}} \cong \zeta_{S_{p}}*L_{p}(\chi)^{+}$), so we can think of $\widetilde{L}_{p}(\chi)^{+}$ as cutting out the quadratic extension $\widetilde{N} : \widetilde{T}_{p}\rightarrow\widetilde{S}_{p}$ (resp.~$L_{p}(\chi)^{+}$ cutting out $N : T_{p}\rightarrow S_{p}$) in those cases.

The above paragraph can be recast in geometric terms as follows. Let $X = \Spec\F_{q}$ be a scheme theoretic point and let $Y\rightarrow X$ be the finite morphism corresponding to one of the \'{e}tale algebras $\F_{q},\F_{q}\times\F_{q}$ or $\F_{q^{2}}$ over $\F_{q}$. This determines a pushforward map on effective $0$-cycles $\pi : Z_{0}^{\eff}(Y)\rightarrow Z_{0}^{\eff}(X)$ (see \cite[Ex.~3.12]{kob} for more details) which further induces a map of reduced incidence algebras taking $\zeta_{Y}\in\widetilde{I}(Z_{0}^{\eff}(Y))$ to one of the following functors in $\widetilde{I}(Z_{0}^{\eff}(X))$: 
$$
\pi_{*}\zeta_{Y} = \begin{cases}
    \zeta_{X}, &\text{if } Y = \Spec\F_{q}\\
    \zeta_{X}*\zeta_{X}, &\text{if } Y = \Spec(\F_{q}\times\F_{q})
\end{cases}
$$
In both of these cases, $\pi$ is cut out by a relative zeta function: $\delta$ when $Y = \Spec\F_{q}$ and a copy of $\zeta_{X}$ when $Y = \Spec(\F_{q}\times\F_{q})$. However, when $Y = \Spec\F_{q^{2}}$, $\pi_{*}$ only has a relative zeta in the numerical incidence algebra, namely $\widetilde{L}_{p}(\chi)^{+} - \widetilde{L}_{p}(\chi)^{-}$ - explicitly, this is the inert case in the proof of Theorem~\ref{thm:mainthmredloc}. 
\end{ex}

\section{Future Directions}

There are two main lines of inquiry which we intend to investigate further in future work.\\

{\bf Other Quadratic Extensions.} Consider a hyperelliptic curve $C/\F_{q}$, i.e.~an algebraic curve which admits a finite map $C\rightarrow\P^{1}$ of degree $2$. Such a map determines a poset map on effective $0$-cycles $\pi : Z_{0}^{\eff}(C)\rightarrow Z_{0}^{\eff}(\P^{1})$. Viewing $C\rightarrow\P^{1}$ as a geometric analogue of a quadratic number field $K/\Q$, we should expect a formula for the zeta function of $C$ in terms of the zeta function of $\P^{1}$. 

\begin{question}
Is there a formula 
$$
\pi_{*}\zeta_{C} \cong \zeta_{\P^{1}}*L(C)
$$
in any of the following algebras: the reduced incidence algebra $\widetilde{I}(Z_{0}^{\eff}(\P^{1}))$, the full incidence algebra $I(Z_{0}^{\eff}(\P^{1}))$, the reduced numerical incidence algebra $\widetilde{I}_{\#}(Z_{0}^{\eff}(\P^{1}))$ or the full numerical incidence algebra $I_{\#}(Z_{0}^{\eff}(\P^{1}))$? 
\end{question}

Here, $L(C)$ plays the role of a quadratic character in this geometric setting and should be equal to or closely related to the $L$-function of the curve $C$, depending on the ramification behavior of $\pi$ at the points over $\infty$. In the language of Section~\ref{sec:relzeta}, this would mean $L(C)$ is a relative zeta function for the map $\pi$. More generally, for any degree $2$ ramified cover of algebraic curves $\pi : C\rightarrow D$ over $\F_{q}$, there should be a relative zeta function $L(C/D)$ such that $\pi_{*}\zeta_{C} \cong \zeta_{D}*L(C/D)$. We are currently adapting our techniques in the present article to tackle this question and plan to share our findings in a sequel paper. 

Beyond zeta functions, quadratic extensions of other objects carry important topological information. Consider the case of a degree $2$ branched cover of Riemann surfaces $\pi : Y\rightarrow X$. The Riemann--Hurwitz formula relates the Euler characteristics of $X$ and $Y$: 
$$
\chi(Y) = 2\chi(X) - \sum_{y \text{ ram.}} (e_{y} - 1)
$$
where the sum is over all ramification points $y\in Y$ and the number $e_{y}$ is the ramification index of $\pi$ at $y$. This can be encoded using generating functions as the formula 
\begin{equation}\label{eq:RH}
(1 - t)^{-\chi(Y)} = (1 - t)^{-2\chi(X)}\prod_{y \text{ ram.}} (1 - t)^{e_{y} - 1}. 
\end{equation}

\begin{question}
Does the formula (\ref{eq:RH}) lift to an objective formula in some incidence algebra attached to $X$? 
\end{question}

A likely candidate is $I(SX)$ where $SX$ is the simplicial complex of $X$, since it is a simplicial object with a well-known relation to the Euler characteristic. We plan to revisit this example in future work.\\

{\bf Higher Degree Extensions.} Formula (\ref{eq:quadzeta}) is a special case of the factorization of the zeta function of an abelian number field into Dirichlet $L$-functions (or more generally, the zeta function of a Galois extension of number fields into Artin $L$-functions). For example, when $K/\Q$ is a cyclic extension of degree $n$, $\zeta_{K}(s)$ factors as 
\begin{equation}\label{eq:cyczeta}
\zeta_{K}(s) = \zeta_{\Q}(s)\prod_{j = 1}^{n - 1} L(\chi_{j},s)
\end{equation}
where $\chi_{j}$ are the nontrivial irreducible characters of the Galois group $G = \Gal(K/\Q) \cong \Z/n\Z$. As arithmetic functions, these characters take values in the set $\mu_{n}\subseteq\C$ of roots of unity (or $0$), so they cannot be categorified (in the sense of objective linear algebra) using sets alone. Instead, we propose to categorify such higher order characters using $G$-representations and promote formula (\ref{eq:cyczeta}) to an objective formula in the incidence algebra of $\N^{\times}$ in the category of simplicial $G$-representations. 

\begin{question}
Does formula (\ref{eq:cyczeta}) lift to an objective formula in the incidence algebra $I(\N^{\times})$, constructed in the category of simplicial $G$-representations? 
\end{question}

Work on this question will be carried out in a future article.



\begin{thebibliography}{99}

\bibitem[But]{but} Butcher, J. ``An algebraic theory of integration methods''. Math.~Comp., 26 (1972), 79 - 106. 

\bibitem[CK]{ck} Connes, A.~and Kreimer, D. ``Hopf algebras, renormalization and noncommutative geometry''. Commun.~Math.~Phys., 199 (1998), 203 - 242. 

\bibitem[CLL]{cll} Content, M., Lemay, F. and Leroux, P. ``Cat\'{e}gories de M\"{o}bius et fonctorialit\'{e}s: un cadre g\'{e}n\'{e}ral pour l'inversion de M\"{o}bius''. J.~Combinatorial Theory Ser.~A, 28 (1980), 164 - 190. 

\bibitem[D\"{u}r]{dur} D\"{u}r, A. {\it M\"{o}bius functions incidence algebras and power series representations}. Lecture Notes in Mathematics, vol.~1202. Springer-Verlag, Berlin (1986). 

\bibitem[DK]{dk} Dyckerhoff, T.~and Kapranov, M. ``Higher Segal spaces I''. Preprint (2012), \href{https://arxiv.org/abs/1212.3563v1}{arXiv:1212.3563v1}.

\bibitem[GKT1]{gkt1} G\'{a}lvez-Carrillo, I., Kock, J. and Tonks, A. ``Decomposition spaces, incidence algebras and M\"{o}bius inversion I: basic theory''. Advances in Mathematics, 331 (2018), 952 - 1015. 

\bibitem[GKT2]{gkt2} G\'{a}lvez-Carrillo, I., Kock, J. and Tonks, A. ``Decomposition spaces, incidence algebras and M\"{o}bius inversion II: completeness, length filtration and finiteness''. Advances in Mathematics, 333 (2018), 1242 - 1292. 

\bibitem[GKT3]{gkt3} G\'{a}lvez-Carrillo, I., Kock, J. and Tonks, A. ``Decomposition spaces, incidence algebras and M\"{o}bius inversion III: the decomposition space of M\"{o}bius intervals''. Advances in Mathematics, 334 (2018), 544 - 584. 

\bibitem[GKT4]{gkt-hla} G\'{a}lvez-Carrillo, I., Kock, J. and Tonks, A. ``Homotopy linear algebra''. Proceedings of the Royal Society of Edinburgh Section A: Mathematics 148, 2 (2018), 293 - 325. 

\bibitem[GKT5]{gkt5} G\'{a}lvez-Carrillo, I., Kock, J. and Tonks, A. ``Decomposition spaces in combinatorics''. Preprint (2016), \href{http://arxiv.org/abs/1612.09225}{arXiv:1612.09225}.

\bibitem[Joy]{joy} Joyal, A. ``Une th\'{e}orie combinatoire des s\'{e}ries formelles''. Adv.~Math., 42 (1981), 1 - 82. 

\bibitem[Kob]{kob} Kobin, A. ``A primer on zeta functions and decomposition spaces''. Preprint (2020), \href{https://arxiv.org/abs/2011.13903}{arXiv:2011.13903}. 

\bibitem[Koc]{koc} Kock, J. ``Incidence Hopf algebras''. Notes (2010), \href{http://mat.uab.es/~kock/seminars/incidence-algebras.pdf}{mat.uab.es/~kock/seminars/incidence-algebras.pdf}. 

\bibitem[Law]{law} Lawvere, F.W. ``Ordinal sums and equational doctrines''. {\it Seminar on triples and categorical homology theory, ETH 1966/67}, ed.~B.~Eckmann, {\it Lecture Notes in Mathematics}, no.~80, 141 - 155. Springer-Verlag, New York (1967). Reprinted in Repr.~Theory Appl.~Categ., 18 (2008). 

\bibitem[LM]{lm} Lawvere, F.W.~and Menni, M. ``The Hopf algebra of M\"{o}bius invervals''. Theory Appl.~Categ., 24 (2010), 221 - 265. 

\bibitem[Ler]{ler} Leroux, P. ``Les cat\'{e}gories de M\"{o}bius''. Cahiers Topol.~G\'{e}om.~Diff., 16 (1976), 280 - 282. 

\bibitem[Mus]{mus} Mustat\u{a}, M. ``Zeta functions in algebraic geometry''. Unpublished, available online at \href{http://www.math.lsa.umich.edu/~mmustata/zeta_book.pdf}{http://www.math.lsu.umich.edu/\texttildelow mmustata/zeta\_book.pdf}. 

\bibitem[Rot]{rot} Rota, G.-C. ``On the foundations of combinatorial theory I: theory of M\"{o}bius functions''. {\it Zeitschrift f\"{u}r Wahrscheinlichkeitstheorie und Verwandte Gebiete}, 2 (1964), 340 - 368. 

\end{thebibliography}
\end{document}